\newcommand{\R}{\mathbb{R}}
\newcommand{\C}{\mathbb{C}}
\newcommand\Z{\mathbb{Z}}
\newcommand\Prym{\textrm{Prym}}
\newcommand{\N}{\mathbb{N}}
\newcommand{\Q}{\mathbb{Q}}
\renewcommand{\H}{\mathcal H}
\newcommand{\SL}{{\rm SL}}
\newcommand{\Sur}{S}
\newcommand{\GL}{{\rm GL}}
\newcommand{\id}{\mathrm{id}}
\newcommand{\Ord}{\mathcal{O}}
\newcommand{\ety}{\varnothing}
\newcommand{\eps}{\varepsilon}
\newcommand{\Id}{\mathrm{Id}}
\newcommand{\PrD}{\Omega E_D}
\renewcommand\mod{\text{ mod }}
\newcommand{\s}{\sigma}
\newcommand{\End}{\mathrm{End}}
\newcommand{\dist}{\mathbf{d}}
\newtheorem{Theorem}{Theorem}[section]
\newtheorem{MainThm}{Theorem}
\newtheorem{Corollary}[Theorem]{Corollary}
\newtheorem{Lemma}[Theorem]{Lemma}
\newtheorem{Proposition}[Theorem]{Proposition}
\newtheorem{Remark}[Theorem]{Remark}
\newtheorem{Definition}[Theorem]{Definition}
\newtheorem{Claim}{Claim}
\newtheorem{Convention}{Convention}
\begin{document}
\title[Components of Prym eigenform loci]{Connected components of Prym eigenform loci in genus three}
\author{Erwan Lanneau and Duc-Manh Nguyen}

\date{\today}

\address{
%UMR CNRS 5582 \newline
Institut Fourier, Universit\'e de Grenoble I, BP 74, 38402 Saint-Martin-d'H\`eres, France
}
\email{erwan.lanneau@ujf-grenoble.fr}
\address{
%UMR CNRS 5251 \newline
IMB Bordeaux-Universit\'e de Bordeaux, 351, Cours de la Lib\'eration, 33405 Talence Cedex, France
}
\email{duc-manh.nguyen@math.u-bordeaux1.fr}
%\subjclass[2000]{Primary: 37E05. Secondary: 37D40}
\keywords{Real multiplication, Prym locus, Translation surface}

\maketitle
\begin{abstract}
This paper is devoted to the classification of connected components of Prym eigenform loci in the strata $\H(2,2)^{\rm odd}$ and $\H(1,1,2)$
of the Abelian differential bundle $\Omega\mathcal{M}_3$ over $\mathcal{M}_3$. These loci, discovered by McMullen~\cite{Mc7},
are $\GL^+(2,\R)$-invariant submanifolds of complex dimension $3$ of $\Omega \mathcal{M}_g$ that project to the locus of Riemann surfaces whose Jacobian
variety has a factor admitting real multiplication by some quadratic order $\Ord_D$.

It turns out that these subvarieties can be classified by the discriminant $D$ of the corresponding quadratic orders.
However there algebraic varieties are not necessarily irreducible. The main result we show is that for each discriminant $D$ the corresponding
locus has one component if $D\equiv 0,4 \mod 8$, two components if $D\equiv 1 \mod 8$, and is empty if $D\equiv 5 \mod 8$.

Surprisingly our result contrasts with the case of Prym eigenform loci in the strata $\H(1,1)$ (studied by McMullen~\cite{Mc3}) which is connected for
every discriminant $D$.
\end{abstract}

%\setcounter{tocdepth}{1}
%\tableofcontents
%****************************************************************
%****************************************************************
%****************************************************************
\section{Introduction}

Since the work of McMullen~\cite{Mc1} it has been known that the properties of
$\mathrm{SL}_2(\R)$-orbit closure of translation surfaces are strongly related to
the endomorphisms rings of the Jacobian of the underlying Riemann surfaces
(see also~\cite{Moller2006}). The algebro-geometric approach emphasized by
McMullen is to detect affine homeomorphisms of the flat metric on the level of the
first homology group as affine homeomorphisms induce self-adjoint endomorphisms of the Jacobian variety.\medskip

Recall that an Abelian variety $\mathbb A \in \mathcal A_g$ admits real multiplication by a totally real number field $K$ of degree $g$ over $\Q$ if there exists an inclusion $K \hookrightarrow \mathrm{End}(\mathbb A)\otimes \Q$ such that for any $k\in K$, the action of $k$ is self-adjoint with respect to the polarization of $\mathbb A$. Equivalently, $\End(\mathbb A)$ contains a copy of an order $\Ord \subset K$ acting by self-adjoint endomorphism.

\subsection{Brief facts summary in the genus $2$ case} The locus
$$
\mathcal E_2 = \{ (X,\omega) \in \Omega \mathcal{M}_2 : \mathrm{Jac}(X)
\textrm{ admits real multiplication with } \omega \textrm{ as an eigenform} \},
$$
plays an important role in the classification of $\SL(2,\R)$-orbit closures in $\Omega \mathcal{M}_2$.  Here $\mathbb A=\mathrm{Jac}(X) \in \mathcal{A}_2$, $K$ is a real quadratic field, and
the endomorphism ring is canonically isomorphic to the ring of homomorphisms of
$H_1(X,\Z)$ that preserve the Hodge decomposition. The polarization comes from the
intersection form $\left(\begin{smallmatrix} J & 0 \\ 0 & J\\ \end{smallmatrix}\right)$ on the homology.

The locus $\mathcal E_2$ is actually a (disjoint) union of subvarieties indexed by the discriminants of the orders $\Ord \subset \End(\mathrm{Jac}(X))$.
Since orders in  quadratic fields (quadratic orders) are classified by their discriminant, the {\em unique} quadratic order with discriminant $D$ is denoted by
$\Ord_D$. We then define
$$
\PrD = \{ (X,\omega) \in \mathcal E_2: \omega \textrm{ is as an eigenform for real multiplication by } \Ord_D\}.
$$
The subvarieties $\PrD$ are of interest since they are $\textrm{GL}^{+}(2,\R)$-invariant submanifolds of $\Omega \mathcal{M}_2$
(see~\cite{Mc3,Mc7}). We can further stratified $\PrD$ by defining $ \PrD(\kappa)=\PrD\cap\H(\kappa)$ for  $\kappa=(2)$ or $\kappa=(1,1)$.
This defines complex submanifolds of dimension $2$ and $3$, respectively. Hence $\PrD(2)$ projects to a union of algebraic curves
(Teichm\"uller curves) in the moduli space $\mathcal M_2$.

%********************************************
\subsection{Components of $\PrD(1,1)$ and $\PrD(2)$}

It is well known that the set of Abelian varieties $\mathbb A \in \mathcal{A}_2$ admitting real multiplication by $\Ord_D$ with a specified faithful representation $\mathfrak{i}: \Ord_D \rightarrow \End(\mathbb A)$ is parametrized by the Hilbert modular surface $X_D:=(\mathbb{H}\times -\mathbb{H})/\SL(\Ord_D)$. In~\cite{Mc3}, it has been shown that  each $\PrD$ can be viewed as a $\C^*$-bundle over a Zariski open subset of $X_D$, and we have
$$
\mathcal E_2 = \bigcup_{D\geq 4, D\equiv 0,1 \mod 4} \PrD
$$
In particular $\PrD$ is a connected, complex suborbifold of $\Omega\mathcal{M}_2$ of dimension $3$. The fact that there is only one (connected) eigenform locus for each $D$ follows from the fact that there is only one faithful, proper, self-adjoint representation $\mathfrak{i}: \Ord_D \rightarrow M_4(\Z)$ up to conjugation by $\mathrm{Sp}(4,\Z)$ (see \cite{Mc3} Theorem 4.4).  It follows that $\PrD(1,1)$ is a Zariski open set in $\PrD$. In particular
$\PrD(1, 1)$ is connected for any quadratic discriminant $D$. \medskip

The classification of components of $\PrD(2)$ has also been obtained by McMullen~\cite{Mc4}.

%********************************************
\subsection{Higher genera}

In~\cite{Mc7} it is shown that analogues of $\PrD$ exist in higher genus (up to $5$).
These subvarieties of $\Omega \mathcal{M}_g$ are called {\em Prym eigenform loci}. Surfaces in a Prym eigenform locus are pairs $(X,\omega)$ such that there exists a holomorphic  involution $ \tau : X \rightarrow X$ such that $g(X) - g(Y) = 2$, where $Y=X/\langle \tau\rangle$,  $\tau^\ast\omega=-\omega$, and the {\em Prym  variety} $\Prym(X,\tau)$  admits a real multiplication with $\omega$ an eigenform (see Section~\ref{sec:background} for precise definitions). Note that the condition $g(X) - g(Y) = 2$ is needed for our discussion. For any genus, the set of Prym eigenforms whose Prym variety admits a multiplication by $\Ord_D$ will be denoted by $\PrD$, and the intersection of $\PrD$ with a stratum $\H(\kappa)$ is denoted by $\PrD(\kappa)$.\medskip

The goal of this paper is to investigate the topology of the Prym eigenform loci in the strata $\H(2,2)$ and $\H(1,1,2)$ of $\Omega \mathcal{M}_3$. It is well known that the stratum $\H(2,2)$ also has two components $\H(2,2)^{\rm odd}$ and $\H(2,2)^{\rm hyp}$. It is not difficult to see that Prym eigenforms in $\H(2,2)^{\rm hyp}$ are double covers of surfaces in $\PrD(2)$ (see Proposition~\ref{prop:Prym:H22:hyp}). Thus $\dim\PrD(2,2)^{\rm hyp}=2$, and $\PrD(2,2)^{\rm hyp}$ is a
(finite) union of $\GL^+(2,\R)$ closed orbits. On the other hand,  we have $\dim\PrD(2,2)^{\rm odd}=3$. The  stratum $\H(1,1,2)$ is connected and  we also have $\dim\PrD(1,1,2)=3$ (see Proposition~\ref{prop:neighbor}). \medskip

Our main result reveals that the situation in genus three is quite different from the one in genus. More precisely:

\begin{MainThm}
\label{thm:main}
Let $\kappa\in \{(2,2)^{\rm odd},(1,1,2)\}$. For any discriminant $D \geq 8$, with $D\equiv 0,1 \mod  4$, the loci
$\PrD(\kappa)$ are non empty if and only if $D\equiv 0,1,4 \mod  8$, and in this case they
are pairwise disjoint. Moreover the following dichotomy holds:
\begin{enumerate}
\item If $D$ is even then $\Omega E_D(\kappa)$ is connected,
\item If $D$ is odd then $\Omega E_D(\kappa)$ has exactly two connected components.
\end{enumerate}
\end{MainThm}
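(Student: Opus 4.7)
The strategy is to adapt McMullen's prototype approach from genus two (\cite{Mc3, Mc4}) to the Prym setting in genus three. The idea is to parametrize components of $\PrD(\kappa)$ by discrete arithmetic data (prototypes), exhibit explicit surgery moves that generate the equivalence relation "same component," and identify a $\Z/2$-invariant responsible for the bifurcation in the odd case.

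\emph{Reduction to prototypes.} The first step is to show that every surface $(X,\omega)\in\PrD(\kappa)$ may be deformed inside $\PrD(\kappa)$ to one that is completely periodic in some direction, yielding a cylinder decomposition invariant under the Prym involution $\tau$. The eigenform condition forces the moduli of the $\tau$-orbit of cylinders to be controlled by $\Ord_D$, so after normalizing by $\GL^+(2,\R)$ one obtains a canonical polygonal model encoded by a tuple $P=(w,h,t,e,\varepsilon)$ with $w,h\in\N^*$, $0\leq t<\gcd(w,h)$, $\gcd(w,h,t,e)=1$, $\varepsilon\in\{0,1\}$, and the Pell-type relation
\[
D\ =\ e^2+8wh.
\]
The factor $8$ (instead of $4$ in genus two) reflects the fact that the Prym polarization on $\Prym(X,\tau)$ is twice a principal polarization. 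Since $e^2\equiv 0,1,4\pmod 8$, this relation has no integer solution when $D\equiv 5\pmod 8$, which gives the non-existence half of the theorem. Disjointness of the loci for different $D$ is a consequence of the discriminant of the real multiplication being a topological invariant of the Prym variety.

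\emph{From prototypes to components.} Next I would establish the dictionary: two prototypes $P,P'$ yield surfaces in the same connected component of $\PrD(\kappa)$ if and only if they are related by an explicit group $\mathfrak{S}$ of \emph{prototype moves}. One direction uses a sequence of horizontal-vertical cylinder renormalizations (and McMullen-style butterfly surgeries) to turn any path in $\PrD(\kappa)$ into a sequence of elementary moves on prototypes; the reverse direction is obtained by realizing each generator of $\mathfrak{S}$ by an explicit isotopy in $\PrD(\kappa)$. A direct case analysis on the orbits of $\mathfrak{S}$ then shows that for $D\equiv 0,4\pmod 8$ (so $e$ even) the action is transitive, giving one component, whereas for $D\equiv 1\pmod 8$ (so $e$ odd) the orbit space has exactly two elements, distinguished by the parity $\varepsilon$.

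\emph{Main obstacle and strata issue.} The main difficulty is verifying that $\varepsilon$ is genuinely a topological invariant, i.e.\ that it depends only on the connected component of $\PrD(\kappa)$ and not on the particular cylinder decomposition chosen. I would interpret $\varepsilon$ either as the parity of a $\Z/2$-valued spin cocycle on the Prym lattice or as an invariant of the induced symplectic form on $H_1(X,\Z)/\mathfrak{i}(\Ord_D)H_1(X,\Z)$, and then check directly that none of the allowed prototype moves changes it, while any prototype from each of the two families can be reached by moves. Finally, the two strata $\kappa=(2,2)^{\rm odd}$ and $\kappa=(1,1,2)$ are linked via Proposition~\ref{prop:neighbor}: the latter sits in the boundary of the former inside $\PrD$, so their components are in natural bijection, and the count for one transfers to the other.
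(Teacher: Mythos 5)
Your program is a plausible McMullen-style outline, but it is not the paper's route and several of its load-bearing steps are unjustified. The paper does not classify components of the three-dimensional loci $\PrD(\kappa)$ by a prototype calculus; instead it reduces to the already-classified two-dimensional locus $\PrD(4)$: every component (with two exceptions) contains a surface with an \emph{admissible} saddle connection (Theorem~\ref{thm:no:admisSC}, Corollary~\ref{cor:twin}), which can be collapsed via the kernel foliation to a point of $\PrD(4)$ (Proposition~\ref{prop:collapse}); Proposition~\ref{prop:neighbor} then shows at most one component of $\PrD(\kappa)$ is adjacent to each component of $\PrD(4)$, and Theorem~\ref{thm:comp:Prym4} supplies the count. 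Your ``reduction to prototypes'' is precisely the step you cannot take for granted: for a three-dimensional locus that is not a union of closed orbits there is no canonical periodic direction, and converting an arbitrary path in $\PrD(\kappa)$ into a sequence of elementary moves on cylinder diagrams is as hard as the theorem itself. Relatedly, your deduction that $D\equiv 5\pmod 8$ forces emptiness from the arithmetic of $D=e^2+8wh$ only works once every surface is known to admit such a presentation; and the arithmetic is not a reliable proxy in either direction, since $9=1^2+8\cdot 1$ and $16=0^2+8\cdot 2$ have solutions yet $\Omega E_9(4)=\Omega E_{16}(4)=\ety$. These two discriminants are exactly the exceptional cases your outline ignores: for $D\in\{9,16\}$ the loci $\Omega E_D(\kappa)$ are non-empty but \emph{not} adjacent to $\PrD(4)$, and the paper must identify four explicit surfaces ($\Sur_\kappa(1,1,\pm 1)$, $\Sur_\kappa(1,2,0)$, $\Sur_{(2,2)}(2,1,0)$) to close the argument.

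Two further gaps. First, disjointness of the $\PrD(\kappa)$ for distinct $D$ is not just ``the discriminant is a topological invariant'': a surface in $\H(2,2)^{\rm odd}$ may carry more than one Prym involution, hence more than one Prym variety, and one must rule out that the same form is an eigenform for two different discriminants via different involutions. This is the content of Theorem~\ref{th:2involutions:D9} (two involutions force $D=9$ for both) together with Lemma~\ref{lm:order:unique}. Second, your claimed ``natural bijection'' between components of $\PrD(2,2)^{\rm odd}$ and $\PrD(1,1,2)$ via Proposition~\ref{prop:neighbor} is unsupported: that proposition only describes local neighborhoods inside a fixed stratum, and both loci are three-dimensional, so neither sits as a boundary stratum of the other in any way that transfers component counts; the paper runs the entire argument separately (in parallel) for each $\kappa$. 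On the positive side, your second candidate for the $\Z/2$-invariant --- a mod-$2$ invariant of the intersection form relative to the order --- is essentially the right one: the paper uses $\langle,\rangle_{|\mathrm{Rg}(T)}\bmod 2$ for a normalized generator $T$ of $\mathfrak i(\Ord_D)$ (Theorem~\ref{thm:Dodd:disconnect}); but the spin-cocycle alternative is explicitly \emph{not} the mechanism here, as the paper points out that the source of disconnectedness in genus three differs from the spin phenomenon in $\PrD(2)$.
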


\begin{Remark}
\label{rk:involution}
One of the main differences between the cases of genus two and genus three is that the polarization of the Prym variety in genus three has the form $\left(\begin{smallmatrix} J & 0 \\ 0 & 2J \end{smallmatrix} \right)$, which is the reason why $\PrD(\kappa)$ is empty if $D\equiv 5 \mod 8$. \medskip

In genus two we have $\Prym(X,\tau)=\mathrm{Jac}(X)$ and the Prym involution $\tau$ must be the hyperelliptic involution which is unique, in genus three $\Prym(X,\tau)$ is only a factor of $\mathrm{Jac}(X)$, and there may be more than one Prym involution as we will see in Section~\ref{sec:uniqueness}. Thus it is not obvious that one can use the discriminant to distinguish different Prym eigenform loci.\medskip

It is also worth noticing that while $\Omega E_9(4)$ and $\Omega E_{16}(4)$ are empty (see~\cite{LanNg11}), the loci $\Omega E_9(\kappa)$ and $\Omega E_{16}(\kappa)$ do exist for $\kappa \in \{(2,2)^{\rm odd}, (1,1,2)\}$.
\end{Remark}

%************************************************************************************************************
\subsection{Triple tori}
\label{sec:twin}

An important tool in  our proof is the use of {\em triple tori}:
\begin{enumerate}
\item We say that $(X,\omega)\in\PrD(2,2)^{\rm odd}$ admits a {\em three tori decomposition}
if there exists a triple of homologous saddle connections $\{\s_0,\s_1,\s_2\}$ on $X$ joining the two distinct zeros of $\omega$.
\item We say that $(X,\omega)\in\PrD(1,1,2)$ admits a {\em three tori decomposition} if there exist
two pairs of homologous saddle connections $\{\s_0,\s_1\}$ and $\{\s'_0,\s'_1\}$
on $X$ joining the double zero to the simple zeros such that $\{\s'_0,\s'_1\}=\tau(\{\s_0,\s_1\})$.
\end{enumerate}

If $(X,\omega)$ admits a three tori decomposition then it can be viewed as a connected sum of three
slit tori $(X_j,\omega_j), \, j=0,1,2,$ (see Figure~\ref{fig:3twins:3tori}). We will always assume that $X_0$ is preserved
by the Prym involution $\tau$ and $X_1,X_2$ are exchanged by $\tau$.
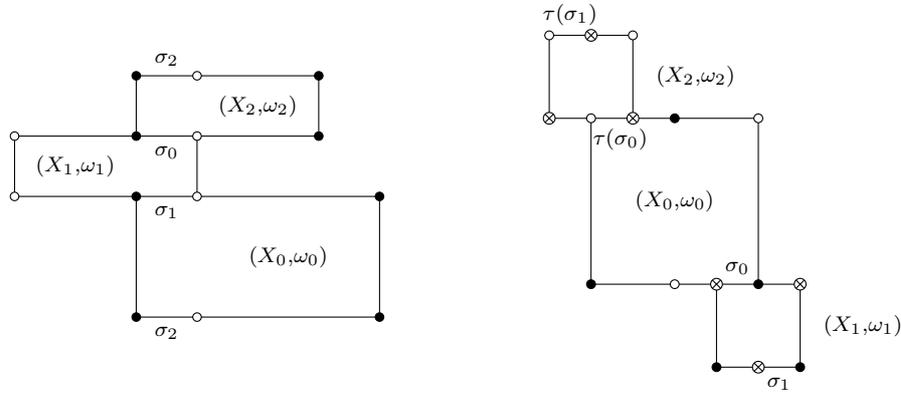
\begin{figure}[htb]
\begin{minipage}[l]{0.23\linewidth}
\centering
\begin{tikzpicture}[scale=0.8]
\draw (0,0) -- (0,2) -- (-2,2) -- (-2,3) -- (0,3) -- (0,4) (1,0) -- (4,0) -- (4,2) -- (1,2) -- (1,3) -- (3,3) -- (3,4) -- (1,4);

\foreach \x in {(0,0), (0,2), (0,3), (0,4)} \draw \x -- +(1,0);
\foreach \x in {(0,0), (0,2), (0,3), (0,4), (3,3), (3,4), (4,0), (4,2)} \filldraw[fill=black] \x circle (2pt);
\foreach \x in {(-2,2), (-2,3), (1,0), (1,2), (1,3), (1,4)} \filldraw[fill=white] \x circle (2pt);
\draw (0.5,3) node[below] {$\scriptstyle \s_0$} (0.5,2) node[below] {$\scriptstyle \s_1$} (0.5,4)
node[above] {$\scriptstyle \s_2$} (0.5,0) node[below] {$\scriptstyle \s_2$};
\draw (2.5,1) node {$\scriptstyle (X_0,\omega_0)$} (-1,2.5) node {$\scriptstyle (X_1,\omega_1)$} (2,3.5) node {$\scriptstyle (X_2,\omega_2)$};
%\draw (2,-1.9) node {$\scriptstyle (Y_1,\eta_1)\in \Omega E_9(2,2)^{\rm odd}$};
\end{tikzpicture}
\end{minipage}
\hskip 30mm
\begin{minipage}[r]{0.23\linewidth}
\centering
\begin{tikzpicture}[scale=0.55, >=angle 60]
\draw (0,0) -- (0,4) -- (-1,4) -- (-1,6) -- (1,6) -- (1,4) -- (4,4) -- (4,0) -- (5,0) -- (5,-2) -- (3,-2) -- (3,0) -- (0,0);
\draw (3,0) -- (4,0); \draw (0,4) -- (1,4);
\foreach \x in {(0,4),(4,4),(-1,6),(1,6),(2,0)} \filldraw[fill=white] \x circle(3pt);
\foreach \x in {(2,4),(0,0),(4,0),(3,-2),(5,-2)} \filldraw[fill=black] \x circle(3pt);
\foreach \x in {(0,6),(-1,4),(1,4),(3,0),(4,-2),(5,0)}{ \filldraw[fill=white] \x circle(4pt); \draw \x +(45:4pt) -- +(225:4pt) +(135:4pt) -- +(315:4pt);}
\foreach \x in {(2,2)} \draw \x node {$\scriptstyle (X_0,\omega_0)$};
\foreach \x in {(2.5,5)} \draw \x node {$\scriptstyle (X_2,\omega_2)$};
\foreach \x in {(6.5,-1)} \draw \x node {$\scriptstyle (X_1,\omega_1)$};
\draw (-0.5,6) node[above] {$\scriptstyle \tau(\s_1)$} (4.5,-2) node[below] {$\scriptstyle \s_1$} (0.7,4)
node[below] {$\scriptstyle \tau(\s_0)$} (3.5,0) node[above] {$\scriptstyle \s_0$};
%\draw (2,-5) node {$\scriptstyle (Y_2,\eta_2)\in \Omega E_9(1,1,2)$};
\end{tikzpicture}
\end{minipage}
\caption{Decomposition of $(X,\omega)\in \Prym(2,2)^\mathrm{odd}$ (left) and
$(X,\omega)\in \Prym(1,1,2)$ (right) into three tori.}
\label{fig:3twins:3tori}
\end{figure}

As a corollary of our main result, we prove the following theorem, which is used in the paper~\cite{Lanneau:Manh:orbits}:

\begin{MainThm}
 \label{thm:exist:3tori:dec}
 For any discriminant $D$ such that $\Omega E_D(\kappa)\neq \ety$,  there exist in any component of $\Omega E_D(\kappa)$ surfaces which admit three-tori decompositions.
\end{MainThm}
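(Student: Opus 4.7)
The strategy is to combine an openness observation with the construction of explicit prototypes, leveraging Theorem~\ref{thm:main}.

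First, I would verify that admitting a three-tori decomposition is an open condition on $\Omega E_D(\kappa)$. If $(X,\omega)$ possesses the homologous saddle connections required by the definition in Section~\ref{sec:twin}, then any sufficiently small deformation of $(X,\omega)$ within $\Omega E_D(\kappa)$ inherits the same combinatorial configuration in period coordinates, hence a three-tori decomposition of the same topological type. Consequently, Theorem~\ref{thm:exist:3tori:dec} reduces to exhibiting a single three-tori surface in each of the components listed by Theorem~\ref{thm:main}.

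Next, I would build such representatives directly as connected sums of three slit tori $(X_0,\omega_0),(X_1,\omega_1),(X_2,\omega_2)$, with $X_0$ stabilized by the Prym involution $\tau$ and $X_1,X_2$ exchanged by $\tau$, in the configurations of Figure~\ref{fig:3twins:3tori}. Such a surface is a Prym eigenform for $\Ord_D$ precisely when the periods of the $\omega_j$ and the gluing parameters (widths, heights, twists) satisfy arithmetic constraints reflecting the action of a generator of $\Ord_D$ on $H_1$ in the basis adapted to the decomposition. Solving these constraints for each admissible class $D\equiv 0,1,4 \mod 8$ yields explicit families of prototypes which admit three-tori decompositions by construction.

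The final step is to argue that these prototypes meet every component of $\Omega E_D(\kappa)$. When $D$ is even this is automatic from the connectedness part of Theorem~\ref{thm:main}. When $D$ is odd, one must realize both components, which I would do by computing on a prototype the $\Z/2$-valued invariant that the proof of Theorem~\ref{thm:main} uses to distinguish them (likely a parity attached to a twist or gluing parameter), and checking that both values can be achieved by independent integer choices without affecting the discriminant $D$.

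The main obstacle is precisely this last computation: identifying the discrete invariant from the proof of Theorem~\ref{thm:main} on a three-tori prototype and verifying that it genuinely takes both values. The most natural way to sidestep this obstacle is to organize the proof of Theorem~\ref{thm:main} itself around the enumeration of three-tori prototypes, so that the components are classified by prototype invariants; then Theorem~\ref{thm:exist:3tori:dec} becomes an immediate byproduct of the classification.
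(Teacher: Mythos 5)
Your proposal follows essentially the same route as the paper: the prototypes $\Sur_\kappa(w,h,\pm e)$ of Lemma~\ref{lm:example:D} admit three-tori decompositions by construction, connectedness handles $D\not\equiv 1 \bmod 8$, and for $D\equiv 1\bmod 8$ one checks that the two prototypes lie in different components. The ``main obstacle'' you flag is already resolved in the paper, since the proof of Theorem~\ref{thm:Dodd:disconnect} computes the $\Z/2$-valued invariant $\langle\,,\rangle_{|\mathrm{Rg}(T)}\bmod 2$ precisely on $\Sur_\kappa(w,h,e)$ and $\Sur_\kappa(w,h,-e)$ and shows it distinguishes them.
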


Theorem~\ref{thm:exist:3tori:dec} is proved in Section~\ref{sec:proof}.

%*******************************************************************
%*******************************************************************
%*******************************************************************
\subsection{Strategy of the proof}

The important ingredients of the proof of the main theorem is the use of surgeries (see Section~\ref{sec:admis}).
The core of Theorem~\ref{thm:main} are Theorem~\ref{thm:no:admisSC} on admissible saddle connection
and Theorem~\ref{thm:Dodd:disconnect} on non-connectedness. The proofs of Theorem~\ref{thm:no:admisSC} and
Theorem~\ref{thm:Dodd:disconnect} appear in Section~\ref{sec:collapsing} and \ref{sec:non:connected} respectively.

\begin{enumerate}

\item An elementary way to get Prym eigenforms in $\H(2,2)^{\rm odd}$ and $\H(1,1,2)$ is given by Lemma~\ref{lm:example:D}.
Another way is the use of the surgery ``Breaking up a zero" on a Prym eigenforms in $\H(4)$ (see~\cite{Kontsevich2003}). We deduce that $\PrD(2,2)^{\rm odd}$ and $\PrD(1,1,2)$ are non-empty whenever $\PrD(4)$ is non-empty.

\item In Section~\ref{sec:uniqueness} we prove that the loci $\PrD(\kappa)$ are pairwise disjoint (Lemma~\ref{lm:order:unique} and
Theorem~\ref{th:2involutions:D9}). As we have noticed in Remark~\ref{rk:involution} a surface $(X,\omega)\in \H(2,2)^{\rm odd}$ may have
more than one Prym involution. However we then show that {\em all} Prym varieties admit real multiplication by $\Ord_9$.
This proves in particular that $\Omega E_9(2,2)^{\rm odd}$ is non-empty despite the fact that $\Omega E_9(4)=\ety$.

\item To get an upper bound of the number of components of $\PrD(\kappa)$ our strategy is to find in each component $\mathcal{C}$ of $\PrD(\kappa)$ a surface $(X,\omega)$ such that we can collapse the zeros of $\omega$ along some saddle connections to get a surface in $\PrD(4)$. Such saddle connections are called {\em admissible} (see Section~\ref{sec:admis}). In this situation, the component $\mathcal{C}$ is adjacent to the locus $\PrD(4)$
{\em i.e.} $\PrD(4)\subset \overline{\mathcal C}$. We then prove that the number of components of $\PrD(\kappa)$ that are adjacent to 
$\PrD(4)$ can not exceed the number of components of  $\PrD(4)$. Surprisingly, it turns out that there exist components that are {\em not} adjacent 
to $\PrD(4)$. These are precisely the components of the loci $\Omega E_9(\kappa)$ and $\Omega E_{16}(\kappa)$.
This fact is proved in Theorem~\ref{thm:no:admisSC}. This result plus the fact that $\PrD(4)$ has at most two connected components (by~\cite{LanNg11})
furnish the desired upper bound.

\item Finally, to get the exact count of the number of components we will show in Section~\ref{sec:non:connected} that if $\PrD(4)$ is not connected
then $\PrD(\kappa)$ is not connected either (in contrast with the situation in genus two). This difference comes from the
invariant defined in~\cite{LanNg11}.
\end{enumerate}

\subsection*{Acknowledgements}

We would like to thank Alex Eskin, Martin M\"oller, and Barak Weiss for useful discussions.
We also thank Alex Wright for useful discussions and comments on an earlier version of this text.
We would also thank Universit\'e de Bordeaux  and Institut Fourier in Grenoble for the excellent working
condition during the preparation of this work. Some of the research visits which made this collaboration possible were supported by
the ANR Project GeoDyM. The authors are partially supported by the ANR Project GeoDyM.

%****************************************************************
%****************************************************************
%****************************************************************
\section{Background}
\label{sec:background}
We review the necessary tools and results involved in the proof of our main result.
For an introduction to translation surfaces in general, and a nice survey on this topic, see {\em e.g.}~\cite{Zorich:survey, Masur2002}.

%****************************************************************
\subsection{Prym eigenform}
Let $X$ be a Riemann surface and $\tau$ an involution of $X$. We define the Prym variety of $(X,\tau)$ to be
$$
\Prym(X,\tau)=(\Omega(X,\tau)^-)^*/H_1(X,\Z)^-
$$
\noindent where $\Omega(X,\tau)^-=\ker(\tau+\id) \subset \Omega(X)$, $\Omega(X)$ is the
space of holomorphic one forms on $X$), and $H_1(X,\Z)^-$ is the anti-invariant
homology of $X$ with respect to $\tau$. Remark  that $\Prym(X,\tau)$ has naturally a polarization: the lattice $H_1(X,\Z)^{-}$  is equipped with the restriction of the intersection form on $H_1(X,\Z)$.

Following~\cite{Mc7} we will call a translation surface $(X,\omega)$ a {\em Prym form} if there exists an involution $\tau$ of $X$ such that $\dim_\C\Omega(X,\tau)^-=2$, and $\omega \in \Omega(X,\tau)^-$.
Note that the condition $\dim_\C\Omega^-(X,\tau)=2$ is equivalent to $g(X)-g(Y)=2$, where $Y:=X/\langle \tau \rangle$. In this situation, we will call $\tau$ a {\em Prym involution} of  $X$. Note that a Riemann surface may have more than one Prym involution (see Theorem~\ref{th:2involutions:D9}).

Recall that a (real) quadratic order is a ring isomorphic to $\Z[x]/(x^2+bx+c)$, the {\em discriminant} of the order is defined by $D=b^2-4c$. Orders with the same discriminants are isomorphic. Thus for any $D\in \N, \, D\equiv 0,1 \mod 4$, we will  write $\Ord_D$ to designate the {\em unique} quadratic order of discriminant $D$.  When $D$ is not a square, $\Ord_D$ is a finite index subring of the integer ring in the quadratic field $K:=\Q(\sqrt{D})$.

Let $A$ be an Abelian variety of (complex) dimension $2$. We say that $A$ admits a real multiplication by $\Ord_D$ if there exists an injective ring morphism $\mathfrak{i}: \Ord_D \rightarrow \End(A)$ such that $\mathfrak{i}(\Ord_D)$ is a self-adjoint proper subring of $\End(A)$ (properness means that if $f \in \End(A)$ and there exists $n\in \N^*$ such that $nf \in \mathfrak{i}(\Ord_D)$, then $f\in \mathfrak{i}(\Ord_D)$).

\begin{Definition}
 \label{def:eigen:form}
 We will call a translation surface $(X,\omega)$ a {\em Prym eigenform}, if there exists a Prym involution $\tau$ of $X$ such that
 \begin{itemize}
  \item[$\bullet$] $\Prym(X,\tau)$ admits a real multiplication by some quadratic order $\Ord_D$,
  \item[$\bullet$] $\omega \in \Omega(X,\tau)^-$ is an eigenvector of $\Ord_D$.
 \end{itemize}
\end{Definition}

The set of Prym eigenforms admitting real multiplication by $\Ord_D$ will be denoted by $\Omega E_D$. In \cite{Mc4} it is showed
that $\Omega E_D$ are closed, $\GL^+(2,\R)$-invariant submanifolds of the bundle $\Omega \mathcal{M}_g$. Up to now, these are the only known $\GL^+(2,\R)$-invariant submanifolds of $\Omega \mathcal{M}_g$ which are neither closed orbits nor covers of Abelian differentials or quadratic differentials in lower genus. The intersection of $\Omega E_D$ with a stratum $\H(\kappa)$ will be denoted by $\Omega E_D(\kappa)$. Clearly, $\PrD(\kappa)$ are $\GL^+(2,\R)$-invariant submanifolds of $\H(\kappa)$. \medskip

Any translation surface in genus two is a Prym form (the Prym involution being the hyperelliptic involution). It turns out that the locus
$\mathcal{E}_2$ of Prym eigenforms in genus two is a disjoint union of $\PrD$ for $D\equiv 0,1 \mod 4$ and $D\geq 5$.
It is a fact that $\PrD(2)$ is connected if $D\equiv 0,4,5 \mod 8$, and has two components otherwise ($D\equiv 1 \mod 8$).
On the other hand $\PrD(1,1)$ is connected for all $D$ (see~\cite{Mc3,Mc4}).

McMullen~\cite{Mc7} proved the existence of Prym eigenforms in genus $3$ and $4$, and in particular that  $\PrD(4)$ and $\PrD(6)$ are non-empty for infinitely many $D$. It is well known that the minimal stratum $\H(4)$ of $\Omega \mathcal{M}_3$ has two components $\H^{\rm hyp}(4)$ and $\H^{\rm odd}(4)$
(see \cite{Kontsevich2003} for precise definitions) and the loci $\PrD(4)$ are contained in $\H^{\rm odd}(4)$.
In~\cite{LanNg11} the authors  gave a complete classification of $\PrD(4)$, namely:

\begin{Theorem}[Lanneau-Nguyen~\cite{LanNg11}]
\label{thm:comp:Prym4}
For $D \geq 17$,  $\Omega E_D(4)$ is non empty if and
only  if $D\equiv 0,1,4 \mod  8$. All the loci $\Omega E_D(4)$ are pairwise disjoint.
Moreover, for the values $0,1,4$ of discriminants, the following dichotomy holds. Either
\begin{enumerate}
\item $D$ is odd and then $\Omega E_D(4)$ has exactly two connected components, or
\item $D$ is even and $\Omega E_D(4)$ is connected.
\end{enumerate}
In addition, each connected component of $\Omega E_D(4)$ corresponds to a closed $\GL^+(2,\R)$-orbit.

For $D < 17$, only $\Omega E_{12}(4)$ and $\Omega E_8(4)$ are non-empty, each of which consists of a unique closed $\GL^+(2,\R)$-orbit.
\end{Theorem}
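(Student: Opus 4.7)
The plan is to follow a prototype-analysis approach in the spirit of McMullen's work in genus two, adapted to the genus-three Prym setting where the Prym variety carries the non-standard polarization $\left(\begin{smallmatrix} J & 0 \\ 0 & 2J \end{smallmatrix}\right)$. First I would prove complete periodicity: if $(X,\omega)\in \Omega E_D(4)$, then any direction containing a saddle connection is periodic. This is because $\omega$ is an eigenform for $\Ord_D$ acting on $\Prym(X,\tau)$, so the trace field is $\Q(\sqrt D)$ and standard real-multiplication arguments (Calta, McMullen) force cylinder decomposition. After rotating into a horizontal cylinder decomposition, the Prym involution $\tau$ permutes the cylinders and reverses their orientations, and the existence of a single zero of order $4$ in $\H(4)$ strongly constrains the topology: one shows the horizontal decomposition has either $2$ or $3$ cylinders in a $\tau$-symmetric pattern.

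From such a decomposition I would extract a \emph{prototype}, a tuple $(w,h,t,e) \in \Z^4$ with $w,h>0$, $0\le t < \gcd(w,h)$, and $e$ satisfying an explicit inequality, encoding cylinder widths, heights, twists, and a relative period. Using the eigenform equation one derives the arithmetic relation $D = e^2 + 8wh$; the factor $8$, in contrast with McMullen's $D = e^2 + 4wh$ in genus two, arises precisely from the $2J$ block in the polarization of $\Prym(X,\tau)$. Since $e^2 \mod 8 \in \{0,1,4\}$, this immediately rules out $D\equiv 5 \mod 8$. Conversely, for each $D\geq 17$ with $D\equiv 0,1,4 \mod 8$ I would exhibit an explicit prototype (and likewise for $D=8,12$); the absence of valid prototypes for small $D$ such as $D=9,16$ accounts for the exceptional small-$D$ list in the theorem. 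Disjointness of $\Omega E_D(4)$ for different $D$ then follows because the discriminant is intrinsic to the eigenform: it is the discriminant of the order generated in $\End(\Prym(X,\tau))\otimes \Q$ by the self-adjoint endomorphism whose action on periods is dictated by the prototype, and an argument in the spirit of Section~\ref{sec:uniqueness} shows this order does not depend on the choice of periodic direction or Prym involution.

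To count components I would analyse the equivalence relation on prototypes induced by the $\GL^+(2,\R)$-action together with transitions between periodic directions (``butterfly'' moves). When $D$ is even, I would check that these moves act transitively on the finite set of prototypes of discriminant $D$, yielding a single component. When $D$ is odd, I would define a $\Z/2$-valued spin-like invariant $\epsilon$ on prototypes --- essentially the parity of an explicit polynomial expression in $(w,h,t,e)$ --- and show that the moves preserve $\epsilon$ and have exactly two orbits distinguished by it. Each component is then automatically a closed $\GL^+(2,\R)$-orbit by a dimension count: $\Omega E_D$ has complex dimension $3$, the passage to the minimal stratum $\H(4)$ reduces this by $1$, giving $\dim_\C \Omega E_D(4) = 2$, which matches the dimension of a $\GL^+(2,\R)$-orbit, so each component is a Teichm\"uller curve.

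The main obstacle is verifying that $\epsilon$ is a genuine invariant of the connected component rather than of an auxiliary choice: one must check its invariance under every admissible prototype move, including those that change the periodic direction. The collapse of this invariant to a single value when $D$ is even, and the obstruction $D\not\equiv 5 \mod 8$, both emerge precisely from this move analysis. The remainder of the proof is a careful bookkeeping of prototype moves plus an enumeration for $D<17$.
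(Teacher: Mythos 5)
This statement is Theorem~\ref{thm:comp:Prym4}, which the present paper does not prove: it is quoted from the authors' earlier work~\cite{LanNg11}, so there is no in-paper argument to compare against. Your outline (complete periodicity, $\tau$-symmetric cylinder decompositions, prototypes with $D=e^2+8wh$ explaining both the exclusion of $D\equiv 5 \mod 8$ and the emptiness for $D=9,16$, butterfly moves on prototypes, a mod-$2$ invariant for odd $D$, and the dimension count identifying components with closed $\GL^+(2,\R)$-orbits) is precisely the strategy of~\cite{LanNg11}; the one nuance is that the invariant actually used there, and recalled in Section~\ref{sec:non:connected} of this paper, is the parity of the restriction of the symplectic form to $\mathrm{Rg}(T)$ for the generator $T$ of $\Ord_D$ --- the paper explicitly stresses that this mechanism differs from McMullen's genus-two spin invariant (the two components correspond to two distinct complex lines in $\Omega(X,\tau)^-$, rather than one line distinguished by spin), so calling it ``spin-like'' slightly mislabels its origin.
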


The first striking fact about Prym eigenforms in $\H(4)$  is that $\PrD(4)=\ety$ if $D\equiv 5 \mod 8$, this is actually due to the signature of the polarization of the Prym variety in genus three which  is different from the one in genus two. The second remarkable fact is that $\Omega E_9(4)=\Omega E_{16}(4)=\ety$ even though $9$ and $16$ are not ``forbidden values'' of $D$. It is worth noticing that even though we have the same statement in the case $D \equiv 1 \mod 8$ as McMullen's result in $\PrD(2)$ (namely, $\PrD(4)$ has two components), the reason for this disconnectedness is different. Roughly speaking, the two components of $\PrD(4)$ correspond to two distinct complex lines in the space
$\Omega(X,\tau)^- \simeq H^1(X,\R)^-$, whereas in the case $\PrD(2)$, the two components correspond to the
same complex line (this is actually a consequence of the fact that $\PrD(1,1)$ is connected),  they can only be distinguished by the spin invariant (see \cite[Section 5]{Mc4}).

%************************************************************************************
%************************************************************************************
\subsection{Prym eigenforms in $\H(2,2)$ and $\H(1,1,2)$}

The stratum $\H(1,1,2)$ is connected while the stratum $\H(2,2)$ has two connected components:
$\H(2,2)^{\rm hyp}$ and $\H(2,2)^{\rm odd}$ (see~\cite{Kontsevich2003}). We will not use this
classification in the sequel.

\begin{Proposition}
 \label{prop:Prym:H22:hyp}
 If $(X,\omega) \in \PrD(2,2)^{\rm hyp}$ then there exists a Prym eigenform $(X',\omega') \in \Omega E_{D'}(2)$ and an unramified double cover $\rho: X \rightarrow X'$ such that $\rho^\ast\omega'=\omega$. In particular $\PrD(2,2)^{\rm hyp}$ is a finite union of $\GL^+(2,\R)$ closed orbits, each of which is a copy of a $\GL^+(2,\R)$-orbit in $\Omega E_{D'}(2)$.
\end{Proposition}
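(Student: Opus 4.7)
The plan is to construct a second involution of $X$, distinct from the Prym involution $\tau$, whose fixed-point-free quotient exhibits $(X,\omega)$ as an unramified double cover of a genus-two Prym eigenform. Since $(X,\omega)\in\H(2,2)^{\rm hyp}$, the surface $X$ carries a hyperelliptic involution $\iota$ satisfying $\iota^*\omega=-\omega$ and exchanging the two double zeros of $\omega$. Because $g(X/\iota)=0$ while $g(X/\tau)=1$, the involutions $\iota$ and $\tau$ are distinct, and the centrality of the hyperelliptic involution in $\Aut(X)$ forces them to commute. Hence $\sigma:=\iota\circ\tau$ is a holomorphic involution of $X$, and $\sigma^*\omega=\iota^*\tau^*\omega=-(-\omega)=\omega$.

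The key technical step is to verify that $\sigma$ is fixed-point free. At a point $p$ where $\omega(p)\neq 0$, the equality $\sigma^*\omega=\omega$ combined with $\sigma(p)=p$ would force $d\sigma_p$ to act as the identity on the cotangent space, impossible for a non-trivial holomorphic involution. At a double zero of $\omega$, write $\omega=z^2\,dz$ in a local coordinate; a fixed point of a holomorphic involution must satisfy $d\sigma_p=\pm 1$, but $d\sigma_p=1$ again forces $\sigma$ to be the identity locally, while $d\sigma_p=-1$ produces $\sigma^*\omega=-\omega$, contradicting $\sigma^*\omega=\omega$. Consequently $\rho:X\to X':=X/\langle\sigma\rangle$ is an unramified double cover, Riemann-Hurwitz yields $g(X')=2$, and $\omega$ descends to $\omega'\in\Omega(X')$ with $\rho^*\omega'=\omega$. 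Because $\sigma$ permutes the two zeros of $\omega$ without fixing them, it exchanges them, so their common image is a single double zero of $\omega'$, giving $(X',\omega')\in\H(2)$.

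To transfer the real multiplication, observe that $\rho^*$ identifies $H^1(X',\C)$ with the $\sigma$-invariant subspace of $H^1(X,\C)$. Since $\iota$ acts by $-1$ on the whole of $H^1(X,\C)$, this subspace coincides with the $\tau$-anti-invariant part, i.e.\ with the cohomology underlying $\Prym(X,\tau)$. The corresponding identification on homology produces an isogeny $\mathrm{Jac}(X')\simeq \Prym(X,\tau)$, through which the real multiplication by $\Ord_D$ transports to a real multiplication by some quadratic order $\Ord_{D'}$ on $\mathrm{Jac}(X')$ with $\omega'$ as an eigenform. Hence $(X',\omega')\in\Omega E_{D'}(2)$.

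For the final assertion, each connected component of $\Omega E_{D'}(2)$ is a closed $\GL^+(2,\R)$-orbit (a Teichm\"uller curve) by~\cite{Mc4}, and the assignment $(X,\omega)\mapsto(X',\omega')$ is $\GL^+(2,\R)$-equivariant with fibres of finite cardinality (the set of connected unramified double covers of a fixed $X'$ is finite). Therefore $\PrD(2,2)^{\rm hyp}$ is a finite union of closed $\GL^+(2,\R)$-orbits, each a copy of a $\GL^+(2,\R)$-orbit in $\Omega E_{D'}(2)$. The chief obstacle in this plan is the fixed-point-free property of $\sigma$; once it is in hand, the remainder is formal descent combined with the standard functorial behaviour of Prym varieties.
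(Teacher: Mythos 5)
Your proof is correct and follows essentially the same route as the paper: both form the involution $\iota\circ\tau$, show the quotient is an unramified genus-two cover on which $\omega$ descends, transfer the real multiplication through the induced isogeny $\Prym(X,\tau)\to\mathrm{Jac}(X')$, and conclude by $\GL^+(2,\R)$-equivariance together with the finiteness of unramified double covers. The only cosmetic difference is that you establish unramifiedness by a local fixed-point analysis at regular points and at the zeros, whereas the paper first computes $g(X')=2$ from $\dim\ker(\tau'-\id)=\dim\ker(\tau+\id)=2$ and then deduces the absence of ramification from the Riemann--Hurwitz formula.
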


\begin{proof}[Proof of Proposition~\ref{prop:Prym:H22:hyp}]
By definition $X$ is a hyperelliptic Riemann surface, and the hyperelliptic involution $\iota$ exchanges the zeros of $\omega$. Since $\iota$ commutes with
all automorphisms of $X$, we have $\tau':=\tau\circ\iota$ is also an involution of $X$ (where $\tau$ is the Prym involution of $X$).
Set $X':=X/\langle \tau' \rangle$. Note that $\ker(\tau'-\id)=\ker(\tau+\id)$, thus we have $\dim\ker(\tau'-\id)=2$ and $X'$ is a Riemann surface of genus two.
Let $\rho: X \rightarrow X'$ be the  associated double cover. Using Riemann-Hurwitz formula, it is easy to see that $\rho$ is unramified. Since ${\tau'}^\ast\omega =\omega$, there exists a holomorphic one-form $\omega'$ on $X'$ such that $\rho^\ast\omega'=\omega$. Since $(X,\omega) \in \H(2,2)$ and
$\rho$ is unramified, we conclude that $(X',\omega') \in \H(2)$.  \medskip

Remark that $\rho^*$ is an isomorphism between $\ker(\tau'-\id)=\ker(\tau'+\id)$ and $\Omega(X')$, and $\rho$ maps $H_1(X,\Z)^-$ to a sublattice of index two in $H_1(X',\Z)$, therefore $\rho$ induces a two-to-one covering from $\Prym(X,\tau)$ to $\mathrm{Jac}(X')$. By assumption $\Prym(X,\tau)$ admits a real multiplication by the order $\Ord_D$ for which $\omega$ is an eigenvector. It follows that $\mathrm{Jac}(X')$ also admits a real multiplication by $\Ord_D\otimes\Q$ for which $\omega'$ is an eigenvector. Thus there exists a discriminant $D'$ satisfying $D'|D$ such that $(X',\omega') \in \Omega E_{D'}(2)$. This shows the
first part of the proposition.\medskip

But we know from \cite{Mc3} that $\Omega E_{D'}(2)$ is a union of $\GL^+(2,\R)$ closed orbits, and since the map $(X,\omega) \mapsto (X',\omega')$ is clearly $\GL^+(2,\R)$-equivariant, it follows that $(X,\omega)$ belongs to a $\GL^+(2,\R)$-closed orbit. Since any Riemann surface $X$ in $\mathcal{M}_g$ admits
only finitely may unramified double covers, we derive that there are only finitely many closed orbits in $\PrD(2,2)^{\rm hyp}$. The proposition is then proved.
\end{proof}

Because of Proposition~\ref{prop:Prym:H22:hyp}, in the rest of the paper we will focus on $\PrD(2,2)^{\rm odd}$
and $\PrD(1,1,2)$.  Observe that if $(X,\omega) \in \Prym(2,2)^{\rm odd}$ then the Prym involution $\tau$ exchanges the two
zeros of $\omega$, and if $(X,\omega) \in \Prym(1,1,2)$ then $\tau$ exchanges the simple zeros $\omega$.
\medskip

The next lemma provides us with examples of Prym eigenforms in $\Prym(2,2)^{\rm odd}$ and $\Prym(1,1,2)$.

\begin{Lemma}[Real multiplication by $\Ord_D$]
\label{lm:example:D}
Let $(w,h,e)\in \Z^3$ be an integral vector satisfying
$$
\left\{\begin{array}{l}        w>0,h>0,\;        \gcd   (w,h,e)    =1,\\
   D=e^2+8wh, \; e+\sqrt{D}>0.
\end{array}
\right.
$$
Let $\lambda:=\frac{e+\sqrt{D}}{2}$. Note that $\lambda^2=e\lambda+2wh$. We denote by $\Sur_{\kappa}(w,h,e)$
the surface defined in Figure~\ref{fig:examples:D} below. Then
$$
\Sur_{\kappa}(w,h,e) \in \Omega E_D(\kappa).
$$
\end{Lemma}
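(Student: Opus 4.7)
The plan is to exhibit a self-adjoint endomorphism $T$ of $\Prym(X,\tau)$, defined over $\Z$ on the lattice $H_1(X,\Z)^-$, that satisfies the quadratic equation
\[
T^2 - eT - 2wh\cdot\id = 0,
\]
and for which $\omega=\omega_{w,h,e}$ is an eigenvector with eigenvalue $\lambda=(e+\sqrt{D})/2$. Once this is done, $\Z[T]\subset\End(\Prym(X,\tau))$ is an order whose discriminant is $e^2+8wh=D$, hence isomorphic to $\Ord_D$. Standard verifications of properness and self-adjointness will then place $(X,\omega)$ in $\Omega E_D(\kappa)$.

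First I would read off from Figure~\ref{fig:examples:D} the Prym involution $\tau$ (the central symmetry of the picture which sends $\omega\mapsto-\omega$) and confirm by inspection that $\dim_\C\Omega(X,\tau)^-=2$ and that $\tau$ exchanges the zeros in the appropriate way ($\tau$ exchanges the two double zeros in $\H(2,2)^{\rm odd}$; $\tau$ exchanges the two simple zeros and fixes the double zero in $\H(1,1,2)$). In both cases $S_\kappa(w,h,e)$ decomposes naturally as a connected sum of three tori $(X_0,\omega_0)$, $(X_1,\omega_1)$, $(X_2,\omega_2)$ with $\tau(X_0)=X_0$ and $\tau(X_1)=X_2$, as in Section~\ref{sec:twin}.

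Next I would choose a symplectic basis $(\alpha_0,\beta_0,\alpha_1,\beta_1)$ of $H_1(X,\Z)^-$ adapted to this three-tori decomposition: $(\alpha_0,\beta_0)$ a standard basis of $H_1(X_0,\Z)$ (which lies in $H_1(X,\Z)^-$ because $\omega_0$ is anti-invariant on $X_0$), and $(\alpha_1,\beta_1)$ the anti-invariant combinations $(\gamma-\tau_*\gamma)$ coming from $X_1$ and its $\tau$-image $X_2$. In this basis the intersection form is
\[
\begin{pmatrix} J & 0 \\ 0 & 2J\end{pmatrix},
\]
the factor $2$ on the second block reflecting the doubling that occurs when one antisymmetrizes a cycle. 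The periods of $\omega$ in this basis can be computed directly from Figure~\ref{fig:examples:D}; the constraints $D=e^2+8wh$ and the definition of $\lambda$ have been chosen precisely so that the period vector of $\omega$ is an eigenvector of a certain integer matrix. Concretely one expects $T$ to act by a block diagonal matrix of the form $T|_{H_1(X_0)}=\left(\begin{smallmatrix} 0 & 2h \\ w & e\end{smallmatrix}\right)$ (or a transpose thereof) and similarly on the antidiagonal block, so that both blocks have characteristic polynomial $x^2-ex-2wh$; the eigenvector condition $\binom{w}{\lambda}\mapsto\lambda\binom{w}{\lambda}$ follows from $\lambda^2=e\lambda+2wh$.

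The main obstacle will be arranging the matrix of $T$ so that all four required properties hold simultaneously: (i) $T$ preserves the integer lattice $H_1(X,\Z)^-$, (ii) $T$ is self-adjoint with respect to the polarization $\left(\begin{smallmatrix} J & 0 \\ 0 & 2J\end{smallmatrix}\right)$ (this is where the factor $2$ in the polarization constrains the off-diagonal entries of the block on $(\alpha_1,\beta_1)$), (iii) the complex linear extension of $T$ preserves $\Omega(X,\tau)^-$, and (iv) $\omega$ is an eigenvector with eigenvalue $\lambda$. Once a single matrix is exhibited meeting (i)--(iv), properness of $\Z[T]\hookrightarrow\End(\Prym(X,\tau))$ is automatic from $\gcd(w,h,e)=1$: any strictly larger order containing $T$ would force a common divisor among the entries of $T$, contradicting the coprimality hypothesis. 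This gives $(X,\omega)\in\Omega E_D(\kappa)$ as claimed.
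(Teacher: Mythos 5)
Your proposal follows essentially the same route as the paper: the paper also takes the three-tori symplectic basis $(a_0,b_0,a,b)$ with $a=a_1+a_2$, $b=b_1+b_2$, polarization $\left(\begin{smallmatrix} J & 0\\ 0 & 2J\end{smallmatrix}\right)$, and the explicit generator
$T=\left(\begin{smallmatrix} e\Id_2 & \left(\begin{smallmatrix} 2w & 0\\ 0 & 2h\end{smallmatrix}\right)\\ \left(\begin{smallmatrix} h & 0\\ 0 & w\end{smallmatrix}\right) & 0\end{smallmatrix}\right)$
satisfying $T^2=eT+2wh\,\Id_4$ with $T^\ast\omega=\lambda\omega$. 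The only caution is your phrase ``$T|_{H_1(X_0)}$'': no such $T$ can preserve $H_1(X_0,\Z)$ (its eigenvalue there would have to be the irrational $\lambda$), so the correct $2\times 2$ block splitting is with respect to $\mathrm{span}(a_0,a)\oplus\mathrm{span}(b_0,b)$, i.e.\ the blocks must couple the invariant torus with the exchanged pair, exactly as your eigenvector $\binom{w}{\lambda}$ already indicates.
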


\begin{figure}[htb]
\begin{minipage}[t]{0.3\linewidth}
\centering
\begin{tikzpicture}[scale=0.5, >=angle 60]
\draw (3,0) -- (5,0) -- (5,3) -- (3,3) -- (3,4.5) -- (5,4.5) -- (5,6) -- (3,6) (2,6) -- (2,4.5) -- (0,4.5) -- (0,3) -- (2,3) -- (2,0);
\foreach \x in {(2,6), (2,4.5), (2,3), (2,0)} \draw[red] \x -- +(1,0);
\foreach \x in {(2,6), (5,6), (2,4.5), (5,4.5), (2,3), (5,3), (2,0), (5,0)} \filldraw[fill=white] \x circle (2pt);
\foreach \x in {(3,6), (0,4.5), (3,4.5), (0,3), (3,3), (3,0)} \filldraw[fill=black] \x circle (2pt);
\foreach \x in {(2,1), (0,3.5), (2,5)} \draw[<->, dashed] \x -- +(3,0);
\foreach \x in {(-0.2,3), (-0.2,4.5)} \draw[<->, dashed] \x -- +(0,1.5); \draw[<->, dashed] (-0.2,0) -- (-0.2,3);
\foreach \x in {(3.5,1)} \draw \x node[above] {$\scriptstyle \lambda$};
\foreach \x in {(1.5,3.5), (3.5,5)} \draw \x node[above] {$\scriptstyle w$};
\draw (-0.2,1.5) node[left] {$\scriptstyle \lambda$} (-0.2,3.75) node[left] {$\scriptstyle h$} (-0.2,5.25) node[left] {$\scriptstyle h$};

\draw (2.5,-1.5) node {$\scriptstyle \Sur_{(2,2)}(w,h,e) \in \Omega E_{D}(2,2)^{\rm odd}$};
\end{tikzpicture}
\end{minipage}
%\hskip 10mm
\begin{minipage}[t]{0.6\linewidth}
\centering
\begin{tikzpicture}[scale=0.22,>=angle 60]
\draw (0,0) -- (0,8) -- (-5,8) -- (-5,12) -- (1,12) -- (1,8) -- (8,8) -- (8,0) -- (13,0) -- (13,-4) -- (7,-4) -- (7,0) -- (0,0);
\draw (7,0) -- (8,0); \draw (0,8) -- (1,8);
\draw[<->, dashed] (0,4) -- (8,4);

\foreach \x in {(-5,8),(1,8),(-5,12),(1,12), (1,0)} \filldraw[fill=white] \x circle(4pt);
\foreach \x in {(7,8),(7,0),(13,0),(7,-4),(13,-4)} \filldraw[fill=black] \x circle(4pt);

\foreach \x in {(0,12),(0,8),(8,8),(0,0),(8,0),(8,-4)} {\filldraw[fill=white] \x circle (5pt); \draw \x +(45:5pt) -- +(225:5pt) +(135:5pt) -- +(315:5pt);}

\foreach \x in {(-5,10), (7,-2)}\draw[<->, dashed] \x -- +(6,0);

\foreach \x in {(-5.5,8), (-5.5,-4)} \draw[<->, dashed] \x -- +(0,4);
\foreach \x in {(-5.5,0)} \draw[<->, dashed] \x -- +(0,8);
\foreach \x in {(-5.5,4)} \draw \x node[left] {$\scriptstyle \lambda$};
\foreach \x in {(-5.5,10), (-5.5,-2)} \draw \x node[left] {$\scriptstyle h$};
\foreach \x in {(-2,10), (10,-2)} \draw \x node[above] {$\scriptstyle w$};
\foreach \x in {(4,4)} \draw \x node[above] {$\scriptstyle \lambda$};
\draw (2,-5.5) node {$\scriptstyle \Sur_{(1,1,2)}(w,h,e)\in \Omega E_D(1,1,2)$};
\end{tikzpicture}
\end{minipage}
\caption{Real multiplication by $\Ord_{D}$}
\label{fig:examples:D}
\end{figure}

%\commenterwan{A verifier, mais je crois que c'est bon:
%$$
%\begin{array}{l}
%X_{\kappa}^9= X_\kappa(1,1,-1)\\
%Y_{\kappa}^9= \left(\begin{smallmatrix} 1/2 & 0 \\ 0 & 1/2 \end{smallmatrix}\right)\cdot  X_\kappa(1,1,1) \\
%Z_{\kappa}^{16}= \left(\begin{smallmatrix} 1 & 0 \\ 0 & 1/2 \end{smallmatrix}\right)  \cdot  X_\kappa(1,2,0) \\
%T_{(2,2)}^{16}= \left(\begin{smallmatrix} 1/2 & 0 \\ 0 & 1/2 \end{smallmatrix}\right)\cdot  X_{(2,2)}(2,1,0)
%\end{array}
%$$
%}
\begin{proof}[Proof of Lemma~\ref{lm:example:D}]
Each surface  in Figure~\ref{fig:examples:D} is a connected some of three slit tori, and admits an involution $\tau$ which fixes one torus and
exchanges the other two (see also Section~\ref{sec:twin}). It is not difficult to see that $\tau$ is a Prym involution, and that $\Sur_\kappa(w,h,e) \in \Prym(\kappa)$.
Let $(X,\omega)$ be one of the surfaces in Figure~\ref{fig:examples:D}. Let $X_0$ be the torus invariant by $\tau$, and $X_1,X_2$ be the other
two tori. By construction, there are bases $(a_i,b_i)$ of $H_1(X_i,\Z)$, $i=0,1,2$, such that
\begin{itemize}
\item[$\bullet$] $\tau(a_0)=-a_0, \tau(b_0)=-b_0$, $\tau(a_1)=-a_2, \tau(b_1)=-b_2$,
 \item[$\bullet$] $\omega(a_0)=\lambda,\omega(b_0)=\imath  \lambda$,
 \item[$\bullet$] For $i=1,2$ one has $\omega(a_i)=w$ and $\omega(b_i)=\imath h$.
\end{itemize}
Set $a=a_1+a_2, b=b_1+b_2$. Then $\{a_0,b_0,a,b\}$ is a symplectic basis of $H_1(X,\Z)^-$ in which the intersection form is given by the matrix
$\left(\begin{smallmatrix} J & 0 \\ 0 & 2J \end{smallmatrix}\right)$.
Let $T$ be the endomorphism of $H_1(X,\Z)$ which is given in the basis $(a_0,b_0,a,b)$ by the matrix
$$
T=\left(\begin{array}{cc}
e\Id_2 & \left(\begin{smallmatrix} 2w & 0 \\ 0 & 2h \\ \end{smallmatrix}\right)\\
\left(\begin{smallmatrix} h & 0 \\ 0 & w \\ \end{smallmatrix}\right) & 0 \\
\end{array}\right).
$$
Since the restriction of the intersection form on $H^-_1(X,\Z)$ is given by
$\left(\begin{smallmatrix} J & 0 \\ 0 & 2J\\ \end{smallmatrix}\right)$, it is easy to check that $T$ is self-adjoint
with respect this form. Note that in this basis $\omega$ is given by the vector
$(\lambda,\imath \lambda,2w,2h\imath)$, therefore we have $T^\ast\omega= \lambda \omega$. It follows that $T \in \mathrm{End}(\Prym(X,\tau))$.
Since $T$ satisfies  $T^2 = eT + 2wh\Id_4$, $T$ generates a self-adjoint proper subring of $\mathrm{End}(\Prym(X,\tau))$ isomorphic to $\Ord_D$
for which $\omega$ is an eigenvector. Thus $\Sur_\kappa(w,h,w)\in \Omega E_D(\kappa)$. The lemma is proved.
\end{proof}

\begin{Corollary}
 \label{cor:non:empty}
For any $D \geq 8$, $D \equiv 0,1,4 \mod 8$, the loci $\Omega E_D(2,2)^{\rm odd}$ and $\Omega E_D(1,1,2)$ are non-empty.
 \end{Corollary}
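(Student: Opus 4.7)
The plan is to invoke Lemma~\ref{lm:example:D} directly by exhibiting, for each admissible value of $D$, an explicit integral triple $(w,h,e)$ satisfying the hypotheses of that lemma. Once such a triple is produced, the corresponding surface $\Sur_\kappa(w,h,e)$ lies in $\Omega E_D(\kappa)$ for both $\kappa=(2,2)^{\rm odd}$ and $\kappa=(1,1,2)$, so non-emptiness is immediate.

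I would split into three cases according to $D \mod 8$. In each case, the strategy is to fix $e$ so that $e^2$ has the prescribed residue modulo $8$, and then choose $w=1$ and $h=(D-e^2)/8$, which is automatically a positive integer under the hypothesis $D\geq 8$:
\begin{itemize}
\item If $D\equiv 0 \mod 8$, take $(w,h,e)=(1,\, D/8,\, 0)$. Then $D=0+8\cdot 1 \cdot (D/8)$, $e+\sqrt{D}=\sqrt{D}>0$, and $\gcd(1,D/8,0)=1$.
\item If $D\equiv 4 \mod 8$, then $D\geq 12$, so take $(w,h,e)=(1,\, (D-4)/8,\, 2)$. Again $D=4+8wh$, $e+\sqrt{D}=2+\sqrt{D}>0$, and $\gcd(1,(D-4)/8,2)=1$.
\item If $D\equiv 1 \mod 8$, then $D\geq 9$, so take $(w,h,e)=(1,\, (D-1)/8,\, 1)$. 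Then $D=1+8wh$, $e+\sqrt{D}=1+\sqrt{D}>0$, and $\gcd(1,(D-1)/8,1)=1$.
\end{itemize}

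In all three cases the triple $(w,h,e)$ satisfies the standing hypotheses of Lemma~\ref{lm:example:D}, and so $\Sur_\kappa(w,h,e)\in\Omega E_D(\kappa)$ for $\kappa\in\{(2,2)^{\rm odd},(1,1,2)\}$. This establishes non-emptiness.

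There is no real obstacle here: the content is entirely in Lemma~\ref{lm:example:D}, and the corollary reduces to the elementary arithmetic observation that every $D\geq 8$ with $D\equiv 0,1,4 \mod 8$ can be written as $e^2+8wh$ with $(w,h)=(1,(D-e^2)/8)$ for a small choice of $e\in\{0,1,2\}$. The only point worth double-checking is the $\gcd$ condition, which is trivially satisfied because $w=1$ in each case.
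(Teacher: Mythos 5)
Your proof is correct and follows exactly the paper's route: the paper's own proof of this corollary is the one-line statement ``Apply Lemma~\ref{lm:example:D} for some $(w,h,e)$ with $D=e^2+8wh$,'' and you have simply supplied the explicit triples $(1,(D-e^2)/8,e)$ with $e\in\{0,1,2\}$ chosen according to $D \bmod 8$, all of which satisfy the lemma's hypotheses. Nothing further is needed.
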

\begin{proof}[Proof of Corollary~\ref{cor:non:empty}]
Apply Lemma~\ref{lm:example:D} for some $(w,h,e)\in \Z$ with $D=e^2+8wh$.
\end{proof}

%************************************************************************************
\subsection{Kernel foliation}

To investigate the topology of these loci we first recall the notion of the kernel foliation. Let $(X,\omega)\in\H(\kappa)$ be a translation surface.
In a neighborhood of $(X,\omega)$ the kernel foliation leaf of $(X,\omega)$ consists of surfaces having the same absolute periods as $(X,\omega)$  and the relatives periods slightly different. This foliation has already appeared in several papers (see for example \cite{EskMasZor03, Calta2004, Masur:Zorich, survey:Farb:06, MinWei14}.

For $\kappa \in \{ (2,2)^{\rm odd}, (1,1,2)\}$, the intersection with the kernel foliation leaves  gives rise to a foliation of the Prym eigenform loci $\PrD(\kappa)$, the leaves of this foliation  have complex dimension one. Constructions of surfaces in  the intersection of the kernel foliation and Prym eigenform loci can be found in \cite{Lanneau:Manh:cp, Lanneau:Manh:orbits}. Since the leaves of this foliation has dimension one, for any $(X,\omega)\in \PrD(\kappa)$, we can use the notation $(X,\omega)+w$, with $w \in \C$ and   $|w|$ small, to denote  surfaces in the same leaf and close to $(X,\omega)$ (see~\cite{Lanneau:Manh:orbits}, Section 3). Moreover up to the action of $\GL^+(2,\R)$, a neighborhood of $(X,\omega)$ in $\PrD(\kappa)$ consists of surfaces in the same kernel foliation leaf as $(X,\omega)$. Namely, we have

\begin{Proposition}[\cite{Lanneau:Manh:cp}, Corollary 3.2]
\label{prop:neighbor}
Let $(X',\omega') \in \PrD(\kappa)$ close enough to $(X,\omega)\in \PrD(\kappa)$. Then there exists a unique pair $(g,w)$, where
$g \in \GL^+(2,\R)$ close to $\id$, and $w\in \C$ with $|w|$ small, such that $(X',\omega')=g\cdot((X,\omega)+w)$. In particular, we have
$\dim\PrD(\kappa)=3$.
\end{Proposition}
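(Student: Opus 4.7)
The plan is to identify the tangent space to $\PrD(\kappa)$ at $(X,\omega)$, show it splits as the sum of the tangent to the $\GL^+(2,\R)$-orbit and the tangent to the kernel foliation leaf, and then apply the inverse function theorem to the map $(g,w) \mapsto g \cdot ((X,\omega)+w)$.

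First I would set up period coordinates. Since the Prym involution $\tau$ is holomorphic, it deforms uniquely with the surface, so on a neighborhood $U$ of $(X,\omega)$ in $\H(\kappa)$ the period map takes values in the $\tau$-anti-invariant relative cohomology,
\[
\Phi : U \to H^1(X, Z(\omega); \C)^-,
\]
and realizes $U$ as an open subset of this space. Via Gauss--Manin transport the endomorphism $T$ generating the $\Ord_D$-action extends to nearby surfaces; it acts on $H^1(X,\R)^-$ with two real $2$-dimensional eigenspaces $S_1, S_2$, and the eigenform condition on $(X',\omega') \in \PrD(\kappa)$ is the closed linear condition that the absolute class $[\omega']_{\rm abs}$ lie in $S_1 \otimes \C$.

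Next I would compute the tangent space $W$ to $\PrD(\kappa)$ from the short exact sequence
\[
0 \to H^1(X,\C)^- \to H^1(X, Z(\omega); \C)^- \to \widetilde H^0(Z(\omega),\C)^- \to 0.
\]
The absolute part contributes $S_1 \otimes \C$, of complex dimension $2$, while the relative quotient $\widetilde H^0(Z(\omega),\C)^-$ has complex dimension $1$ in both strata: for $\H(2,2)^{\rm odd}$ the involution $\tau$ swaps the two zeros, and for $\H(1,1,2)$ it fixes the double zero and swaps the two simple zeros, in each case leaving a one-dimensional anti-invariant quotient. Hence $\dim_\C W = 3$, with a splitting $W = W_{\rm abs} \oplus W_{\rm rel}$ obtained by choosing a complex line in $W$ supplementary to $W_{\rm abs} = S_1 \otimes \C$.

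Finally I would match these two summands with the two geometric actions. The infinitesimal $\GL^+(2,\R)$-orbit spans $\R\Re[\omega] \oplus \R\Im[\omega] \oplus i\R\Re[\omega] \oplus i\R\Im[\omega] = S_1 \otimes \C$ inside $H^1(X,\C)^-$, which is exactly $W_{\rm abs}$; on the other hand the kernel foliation preserves absolute periods by construction, so its tangent lies in $W_{\rm rel}$, and a dimension count forces equality. Consequently the smooth map $(g,w) \mapsto g \cdot ((X,\omega)+w)$, defined in a neighborhood of $(\id,0) \in \GL^+(2,\R) \times \C$, has invertible differential at $(\id,0)$, and the inverse function theorem yields both local existence and uniqueness of the pair $(g,w)$, as well as the dimension equality $\dim \PrD(\kappa) = 3$. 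The main subtlety is checking that $W_{\rm rel}$ is genuinely tangent to the kernel foliation inside $\PrD(\kappa)$, i.e.\ that every purely relative $\tau$-anti-invariant deformation is realized by a $\tau$-equivariant surgery on $(X,\omega)$; this is the content of the kernel foliation construction developed in \cite{Lanneau:Manh:orbits}.
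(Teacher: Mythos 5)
The paper does not actually prove Proposition~\ref{prop:neighbor}; it imports it verbatim from \cite{Lanneau:Manh:cp} (Corollary 3.2), so there is no internal proof to compare against. That said, your sketch is the standard argument behind that corollary and it is essentially correct: period coordinates restricted to the $\tau$-anti-invariant part of $H^1(X,Z(\omega);\C)$, the eigenform condition as a linear condition on the absolute part, the count $2+1=3$, identification of the two summands with the $\GL^+(2,\R)$-orbit and the kernel foliation leaf, and the inverse function theorem. Two points in your outline carry the real mathematical weight and are stated rather than proved. First, the claim that $\PrD(\kappa)$ is \emph{locally equal} to the linear slice $\{[\omega']_{\rm abs}\in S_1\otimes\C\}$ has two halves: the inclusion $\supseteq$ requires showing that the parallel transport $T$ of the generator of $\Ord_D$ remains an endomorphism of the Prym variety (i.e.\ preserves $H^{1,0}$) as soon as $\omega'$ stays in $S_1\otimes\C$ --- this follows from self-adjointness and the fact that the symplectic-orthogonal complement of a $J$-invariant plane is $J$-invariant, but it is not automatic; the inclusion $\subseteq$ requires ruling out that a nearby point of $\PrD(\kappa)$ carries real multiplication through a \emph{different} embedding or a different Prym involution, which is where Lemma~\ref{lm:order:unique} (uniqueness of the self-adjoint proper subring given the eigenform) and the local finiteness of the set of such branches enter. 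Second, as you note yourself, one must know that the full one-complex-dimensional space of purely relative anti-invariant deformations is realized by the $\tau$-equivariant kernel foliation surgery; this is exactly what \cite{Lanneau:Manh:orbits} supplies. Neither issue is a gap in the sense of a wrong step, but both should be made explicit if this were to stand as a self-contained proof rather than a citation.
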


\section{Uniqueness}
\label{sec:uniqueness}

In genus two Prym involution and hyperelliptic involution coincide, so it is unique. In
higher genus, surfaces may have more than one Prym involution (see {\em e.g.} the Appendix).
In~\cite{LanNg11}, we showed that if $(X,\omega)\in \PrD(4)$ then the Prym involution is unique. This is no longer true in $\H(2,2)^{\rm odd}$, nevertheless, if a surface in $\H(2,2)^{\rm odd}$ has two Prym involutions, then both Prym varieties admit real multiplication by $\Ord_9$ as we will see in Theorem~\ref{th:2involutions:D9}. It follows in particular that if $D_1\neq D_2$ and  $\kappa\in \{(2,2)^{\rm odd},(1,1,2)\}$
then $\Omega E_{D_1}(\kappa)\cap \Omega E_{D_2}(\kappa) =\varnothing$.

\begin{Theorem}
\label{th:2involutions:D9}
Let $(X,\omega)\in \H(2,2)^{\rm odd}$ be a surface having two Prym involutions $\tau_1\neq \tau_2$ such that $\tau_1^*\omega=\tau_2^*\omega=-\omega$.
Then there exist  $\mathfrak{i}_1 : \Ord_9 \rightarrow \mathrm{End}(\mathrm{Prym}(M,\tau_1))$
and $\mathfrak{i}_2 : \Ord_9 \rightarrow \mathrm{End}(\mathrm{Prym}(M,\tau_2))$ such that $\mathfrak{i}_i(\Ord_9)$ is a self-adjoint proper subring of $\mathrm{End}(\Prym(X,\tau_i))$, and  $\omega$ is an eigenform for both subrings $\mathfrak{i}_1(\Ord_9)$ and $\mathfrak{i}_2(\Ord_9)$. In particular $(X,\omega)\in \Omega E_9(2,2)^{\rm odd}$.

If $(X,\omega) \in \H(1,1,2)$, then there exists at most one Prym involution $\tau$ such that $\tau^*\omega=-\omega$.
\end{Theorem}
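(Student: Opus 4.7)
The plan is to set $\sigma := \tau_1 \circ \tau_2$ and exploit the local structure of $\sigma$ at the zeros of $\omega$. Since $\tau_1^*\omega = \tau_2^*\omega = -\omega$, we have $\sigma^*\omega = \omega$, so $\sigma$ is a holomorphic automorphism of $X$ preserving the flat structure. The central local fact I will use is that such an automorphism, in a translation chart where $\omega$ takes the form $z^k\,dz$ near a fixed zero of order $k$, is literally of the form $z \mapsto \zeta z$ with $\zeta^{k+1}=1$; consequently it is globally the identity as soon as it acts trivially on the tangent space at any one fixed zero.

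For the $\H(2,2)^{\rm odd}$ statement, both $\tau_i$ exchange the two distinct order-$2$ zeros, so $\sigma$ fixes each of them. Since $\sigma \neq \id$, its local action at each zero must be by a primitive cube root of unity, which forces $\sigma^3 = \id$ globally; combined with nontriviality this makes $\sigma$ of order exactly $3$. I then introduce $T := \sigma + \sigma^{-1}$ on $H_1(X,\Z)$ and verify: (a)~$T$ commutes with each $\tau_i$ (a short direct manipulation), so it restricts to $H_1(X,\Z)^{-\tau_i}$ and descends to $\End(\Prym(X,\tau_i))$; (b)~$T$ is self-adjoint, since automorphisms of $X$ act symplectically on $H_1(X,\Z)$ and hence $\sigma^\dagger = \sigma^{-1}$; (c)~using $\sigma^3 = \id$ one computes $T^2 = \sigma^2 + 2 + \sigma^{-2} = \sigma^{-1} + 2 + \sigma = T + 2$, so $\Z[T] \cong \Z[x]/(x^2-x-2) \cong \Ord_9$; (d)~$T^*\omega = 2\omega$, exhibiting $\omega$ as an eigenform with eigenvalue $2$. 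Together (a)--(d) produce the embeddings $\mathfrak{i}_i$ for both $i=1,2$.

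The main obstacle is properness of each $\mathfrak{i}_i(\Ord_9)$ in $\End(\Prym(X,\tau_i))$. The only order strictly containing $\Ord_9$ in $\Q \times \Q$ is the maximal order $\Z \times \Z$ (of discriminant~$1$), so properness reduces to checking that the averaging idempotent $\tfrac{1}{3}(1 + \sigma + \sigma^{-1}) = \tfrac{1}{3}(1+T)$ is not an integral endomorphism of the Prym lattice. I would settle this by a direct lattice computation using the three-tori decomposition of $X$ naturally induced by $\sigma$ (cf.~Section~\ref{sec:twin}): the $\sigma$-orbit structure yields an explicit basis of $H_1(X,\Z)^{-\tau_i}$ on which the averaging projector is visibly non-integral; equivalently, $\Prym(X,\tau_i)$ is not isogenous to a product $E_1 \times E_2$ in a manner compatible with its polarization.

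For the $\H(1,1,2)$ statement, I argue by contradiction. Since the order-$2$ zero of $\omega$ is unique and each Prym involution must permute zeros preserving their orders, both $\tau_i$ fix the double zero and exchange the two simple zeros; hence $\sigma$ fixes all three zeros. The local model forces the rotation coefficient of $\sigma$ to satisfy $\zeta^2 = 1$ at each simple zero (cone angle $4\pi$) and $\zeta'^{\,3} = 1$ at the double zero (cone angle $6\pi$). Assuming $\sigma \neq \id$, the local action must be nontrivial at every fixed zero, so $\zeta = -1$ at each simple zero and $\zeta'$ is a primitive cube root of unity. Then $\sigma^2$ acts trivially at the simple zeros, which by the local-to-global principle forces $\sigma^2 = \id$ globally; but $(\zeta')^2 \neq 1$ means $\sigma^2$ is nontrivial at the double zero, a contradiction. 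Hence $\tau_1 = \tau_2$.
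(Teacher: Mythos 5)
Your proof is correct and, for the $\H(2,2)^{\rm odd}$ case, takes a genuinely different route from the paper's. Both arguments begin with the same local analysis of $\sigma=\tau_1\circ\tau_2$ at the fixed zeros (for $\H(1,1,2)$ the paper gets the contradiction directly at the double zero, where each $\tau_i$ must act by $z\mapsto -z$, so $\sigma$ is locally trivial there; your detour through the simple zeros is equally valid), and both conclude $\sigma^3=\id$. From there the paper passes to the quotient torus $Y=X/\langle\sigma\rangle$ (Lemma~\ref{lm:2inv:tor:cover}), lifts a symplectic basis of $H_1(Y,\Z)$ through a point of the fibre fixed by $\tau_1$, and exhibits the generator of the order as an explicit matrix in the resulting basis $(a_0,b_0,a_1,b_1)$ of $H_1(X,\Z)^-$. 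Your intrinsic choice $T=\sigma+\sigma^{-1}$ is cleaner: commutation with both $\tau_i$, self-adjointness, the relation $T^2=T+2$ and the eigenvalue $2$ on $\omega$ all come for free, and the two embeddings $\mathfrak{i}_1,\mathfrak{i}_2$ are produced simultaneously. (One small point to add: you need $T$ not to be the scalar $2\cdot\id$ on $H_1(X,\Z)^-$, so that $\Z[T]$ has rank two; this holds because $H_1(X,\Q)^{\sigma}\cong H_1(Y,\Q)$ has dimension $2$, whereas $H_1(X,\Q)^{-\tau_1}$ has dimension $4$.) The price is paid exactly where you locate it: properness. Your reduction is right --- the only over-order of $\Ord_9$ in $\Q\times\Q$ is $\Z\times\Z$, so it suffices that $\frac{1}{3}(1+\sigma+\sigma^{-1})$ not preserve $H_1(X,\Z)^-$ --- but the ``direct lattice computation'' you defer is precisely where the two proofs reconverge: to exhibit a cycle on which the averaging map is non-integral you must produce an anti-invariant class adapted to the $\sigma$-action, e.g.\ the lift $\alpha_0$ of a simple closed geodesic of $Y$ through a $\tau_1$-fixed point of the fibre, for which $\langle \alpha_0+\sigma\alpha_0+\sigma^2\alpha_0,\beta_0\rangle=1$ is not divisible by $3$ (and $H_1(X,\Z)^-$ is saturated in $H_1(X,\Z)$, so non-divisibility there suffices). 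That is exactly the lifting construction the paper carries out, so this step should be written out rather than sketched. Finally, your unproved assertion that both $\tau_i$ exchange the two double zeros is the same observation the paper invokes without proof for the odd component, so nothing is lost there.
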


\begin{Corollary}
\label{cor:2involutions}
For $\kappa \in \{(2,2)^{\rm odd},(1,1,2)\}$, if $D_1\neq D_2$ then $\Omega E_{D_1}(\kappa)\cap \Omega E_{D_2}(\kappa)=\ety$.
\end{Corollary}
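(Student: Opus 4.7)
The plan is to proceed by contradiction: assume $(X,\omega) \in \Omega E_{D_1}(\kappa) \cap \Omega E_{D_2}(\kappa)$ with $D_1 \neq D_2$, and derive a contradiction. By definition of the Prym eigenform loci, this assumption furnishes Prym involutions $\tau_1,\tau_2$ of $X$, both satisfying $\tau_i^\ast \omega = -\omega$, together with self-adjoint proper embeddings $\mathfrak{i}_i \colon \Ord_{D_i} \hookrightarrow \End(\Prym(X,\tau_i))$ for which $\omega$ is an eigenvector. The two Prym involutions may or may not coincide, and this is the dichotomy on which the argument will pivot.

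First dispose of the stratum $\H(1,1,2)$. The second part of Theorem~\ref{th:2involutions:D9} asserts that any $(X,\omega)\in\H(1,1,2)$ carries at most one Prym involution $\tau$ with $\tau^\ast\omega = -\omega$, hence $\tau_1=\tau_2=\tau$, and $\Prym(X,\tau)$ admits real multiplication by \emph{both} $\Ord_{D_1}$ and $\Ord_{D_2}$ with $\omega$ as a common eigenvector. To conclude, one needs a separate uniqueness lemma (this is the content of Lemma~\ref{lm:order:unique} advertised in the strategy): for a fixed Prym involution $\tau$, the self-adjoint proper subring of $\End(\Prym(X,\tau))$ having $\omega$ as eigenvector is uniquely determined. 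The underlying idea is that evaluation on $\omega$ sends any such subring into $\R$ via its eigenvalue, so both $\mathfrak{i}_1(\Ord_{D_1})$ and $\mathfrak{i}_2(\Ord_{D_2})$ map into the same real quadratic subfield of $\End(\Prym(X,\tau))\otimes \Q$, and the properness assumption pins down the images, hence forces $D_1=D_2$.

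Next turn to $\H(2,2)^{\rm odd}$. If $\tau_1 = \tau_2$ the argument of the previous paragraph gives $D_1=D_2$ verbatim. The genuinely new situation is $\tau_1 \neq \tau_2$, and here Theorem~\ref{th:2involutions:D9} is the essential input: whenever a surface in $\H(2,2)^{\rm odd}$ has two distinct Prym involutions preserving $\omega$ up to sign, \emph{both} corresponding Prym varieties admit real multiplication by $\Ord_9$ with $\omega$ as an eigenform. Applying the uniqueness lemma to $\tau_1$ and to $\tau_2$ separately, $\omega$ is an eigenform for only one order per Prym involution, so $\Ord_{D_1}=\Ord_9$ and $\Ord_{D_2}=\Ord_9$. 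Thus $D_1=D_2=9$, contradicting the hypothesis that they differ.

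The principal obstacle is the uniqueness lemma itself, since $\Prym(X,\tau)$ in genus three need not be simple and $\End(\Prym(X,\tau))\otimes\Q$ may a priori contain several real quadratic subfields. The key subtlety is to combine the eigenform condition, which forces every element of the image to act as a real scalar on the fixed complex line $\C\omega$, with the specific polarization $\left(\begin{smallmatrix} J & 0 \\ 0 & 2J \end{smallmatrix}\right)$ and with the properness requirement in the definition of real multiplication, in order to isolate a single quadratic order. Once the lemma is in hand the corollary follows cleanly from the two-case split above, with Theorem~\ref{th:2involutions:D9} neatly absorbing the possibility of distinct Prym involutions into the exceptional discriminant $D=9$.
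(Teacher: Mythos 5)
Your proposal is correct and follows essentially the same route as the paper: split on whether the two Prym involutions coincide, invoke Theorem~\ref{th:2involutions:D9} when they differ (which forces $\kappa=(2,2)^{\rm odd}$ and $D_1=D_2=9$), and invoke Lemma~\ref{lm:order:unique} when they agree to pin down a unique self-adjoint proper subring and hence $D_1=D_2$. Your extra care in also applying the uniqueness lemma in the two-involution case (to conclude the discriminant must equal $9$ rather than merely that $\Ord_9$ embeds) is a slight refinement the paper leaves implicit, but the argument is the same.
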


\begin{proof}[Proof of Corollary~\ref{cor:2involutions}]
Let $(X,\omega) \in \Omega E_{D_1}(\kappa) \cap \Omega E_{D_2}(\kappa)$. Let  $\tau_1$ and $\tau_2$ be the corresponding Prym involutions of $X$. If $\tau_1 \neq \tau_2$ then by Theorem~\ref{th:2involutions:D9} one has $\kappa=(2,2)^{\rm odd}$ and $D_1 = D_2 = 9$. If $\tau_1 = \tau_2$ then Lemma~\ref{lm:order:unique},
applied to ${\mathbb A} = \Prym(X,\tau_1) = \Prym(X,\tau_2)$, gives the uniqueness of the self-adjoint proper subring $\Ord \subset \mathrm{End}(\mathbb{A})$, which implies $D_1=D_2$. The corollary is then proved.
\end{proof}

We will need the following two elementary lemmas. The first one proves the uniqueness of the
proper subring, once the Prym variety and the eigenform are given (see also~\cite[Section~$5$]{LanNg11}
for related results).

\begin{Lemma}
\label{lm:order:unique}
Let ${\mathbb A}$ be an Abelian variety of dimension two.
We regard ${\mathbb A}$ as a quotient $\C^2/L$, where $L$ is a lattice isomorphic to $\Z^4$ equipped with a
non-degenerate skew-symmetric inner product $\langle, \rangle: L \times L \rightarrow \Z$ which is compatible with
the complex structure. Let $v\neq 0$ be a vector in $\C^2$. Assume that there exists a self-adjoint endomorphism
$\varphi$ of ${\mathbb A}$ such that $\varphi(v)=\lambda v$, with $\lambda\in \R$, $\varphi \neq \lambda\cdot\Id$.
Then there exists a unique discriminant $D$ and a unique self-adjoint proper subring $\Ord$ of ${\rm End}({\mathbb A})$
isomorphic to $\Ord_D$ for which $v$ is an eigenvector.
\end{Lemma}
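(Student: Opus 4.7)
The plan is to split the proof into existence and uniqueness, both exploiting that the polarization on $L$ makes $\End(\mathbb{A}) \otimes \mathbb{R}$ carry a positive Rosati-type involution.

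For \textbf{existence}, I would construct $\Ord$ directly from $\varphi$. Since $\varphi$ is self-adjoint for the polarization, it is semisimple on $H^{1,0} \cong \mathbb{C}^2$ with real eigenvalues; one eigenvalue is $\lambda$, and the other, $\lambda'$, must differ from $\lambda$, because $\varphi \neq \lambda \cdot \Id$ combined with semisimplicity forbids a Jordan block. Hence $\varphi$ satisfies the quadratic polynomial $X^2 - (\lambda+\lambda')X + \lambda\lambda'$, whose coefficients are integers: they equal the corresponding coefficients of the minimal polynomial of the integer matrix $\varphi$ on $L$. Thus $\mathbb{Z}[\varphi]$ is a quadratic order in $\End(\mathbb{A})$, and its saturation $\Ord$ inside $\End(\mathbb{A})$ is a self-adjoint proper subring isomorphic to some $\Ord_D$; every element of $\Ord$ still has $v$ as eigenvector by $\mathbb{Z}$-linearity and stability under division by integers.

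For \textbf{uniqueness}, suppose $\Ord^{(1)}, \Ord^{(2)} \subset \End(\mathbb{A})$ are two self-adjoint proper quadratic subrings with $v$ an eigenvector of each. Since both are proper, each coincides with the saturation of its rationalization $K_i := \Ord^{(i)} \otimes \mathbb{Q}$ in $\End(\mathbb{A})$, so it suffices to prove $K_1 = K_2$. For each $i$ the map $\chi_i : K_i \to \mathbb{R}$ sending $\psi$ to its eigenvalue on $v$ is an injective ring homomorphism (injectivity because $K_i$ is a field). My plan is then to show that $K_1$ and $K_2$ commute in $\End(\mathbb{A}) \otimes \mathbb{Q}$; once commutativity is established, $K_1 K_2$ is a commutative $\mathbb{Q}$-subalgebra simultaneously diagonalizable on $H^{1,0}$ with common eigenvectors $v$ and some $w$, and the eigenvalue-pairing $K_1 K_2 \hookrightarrow \mathbb{R}^2$ is an injective $\mathbb{Q}$-algebra homomorphism (injective because any endomorphism of $\mathbb{A}$ acting trivially on $H^{1,0}$ is zero). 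Since a $\mathbb{Q}$-subalgebra of $\mathbb{R}^2$ has $\mathbb{Q}$-dimension at most two, $K_1 K_2$ has dimension two and must equal both $K_1$ and $K_2$, giving $\Ord^{(1)} = \Ord^{(2)}$ after saturation.

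The hard part will be the commutativity $[K_1, K_2] = 0$. Taking generators $\psi_i \in \Ord^{(i)}$, the commutator $N := [\psi_1, \psi_2]$ is anti-self-adjoint for the polarization and annihilates $v$. One would like to conclude $N = 0$ from the positivity of Rosati, $\mathrm{Tr}(NN^*) > 0$ for $N \neq 0$, combined with $NN^* = -N^2$; however, this positivity alone is not enough, since an anti-self-adjoint $N$ can have nonzero purely imaginary eigenvalues on $H^{1,0}$ (yielding $\mathrm{Tr}(N^2) < 0$, consistent with positivity). The final step will therefore likely require exploiting additional structure: either showing that such a nonzero $N$ together with $\psi_1, \psi_2$ would produce an extra self-adjoint endomorphism contradicting the properness of one of the $\Ord^{(i)}$, or invoking the Albert-type classification of endomorphism algebras of abelian surfaces to exclude the non-commutative alternative.
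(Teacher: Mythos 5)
Your existence half is sound and essentially coincides with the paper's construction (the paper realizes the order as $K_v\cap\End(\mathbb{A})$, where $K_v$ is the algebra of all self-adjoint endomorphisms admitting $v$ as an eigenvector, and exhibits a generator by minimizing the discriminant; this is your saturation of $\Z[\varphi]$). The uniqueness half, however, has a genuine gap at exactly the point you flag. The commutativity $[K_1,K_2]=0$ is never established, and neither of the routes you propose is the right tool: as you note yourself, Rosati positivity applied to the anti-self-adjoint commutator $N$ does not force $N=0$, and Albert's classification is far heavier than what is needed. The missing observation -- which is the heart of the paper's proof -- is elementary: let $S=\C v$ and let $S'$ be the symplectic-orthogonal complement of $S$ in $\C^2$, again a complex line because $\langle,\rangle$ is compatible with the complex structure. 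Any self-adjoint endomorphism having $v$ as an eigenvector preserves $S$, hence by self-adjointness preserves $S'$ as well, and therefore acts as a scalar on each of the two lines. All such endomorphisms are thus simultaneously diagonal in the fixed splitting $\C^2=S\oplus S'$ and commute for free; your $N$ vanishes identically.

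Even granting commutativity, a second step fails as written: the claim that a $\Q$-subalgebra of $\R^2$ has $\Q$-dimension at most two is false (e.g.\ $\Q(\sqrt{2})\times\Q(\sqrt{3})\subset\R\times\R$ has dimension four), so injectivity of the eigenvalue pairing $K_1K_2\hookrightarrow\R^2$ does not by itself bound the dimension. The paper closes this by an arithmetic argument: after normalizing so that the coordinates of $v$ in a basis of $L$ lie in $K=\Q(\lambda)$, any self-adjoint $f$ defined over $\Q$ with $fv=\mu v$ has $\mu\in K$, and its eigenvalue on $S'$ is forced to be the Galois conjugate of $\mu$ (or again rational when $\lambda\in\Q$). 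Hence the full algebra $K_v$ injects into $K$ or into $\Q^2$, so $\dim_\Q K_v=2$, and uniqueness follows at once since every admissible proper order must equal $K_v\cap\End(\mathbb{A})$. Finally, a small but relevant slip: $K_i$ need not be a field -- for square discriminants, which include the cases $D=9$ and $D=16$ central to this paper, $K_i\cong\Q\times\Q$ and your map $\chi_i$ is not injective. That particular claim is not load-bearing in your argument, but it should not be invoked.
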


\begin{proof}
Let $S=\C\cdot v$  be the complex line generated by $v$, and let $S'$ denote the orthogonal complement of $S$ with
respect to $\langle, \rangle$ in $\C^2$. Note that $S'$ is also a complex line in $\C^2$. Set $w=\imath v$. Since $\varphi$ is an endomorphism of
${\mathbb A}$, we have $\varphi(w)=i\varphi(v)=\lambda w$. In other words $\varphi_{|S} = \lambda\cdot \id_S$.
Since $\varphi$ is self-adjoint, it also preserves the complex line $S'$. Thus $\varphi_{|S'}=\lambda'\cdot \id_{S'}$
where $\lambda'\neq \lambda$. \medskip

Since the self-adjoint endomorphism $\varphi$ preserves the lattice $L$ its minimal polynomial
$\chi_\varphi \in \Z[X]$ has degree $2$. By definition $\lambda$ is a real root of $\chi_\varphi$. Hence
$\lambda'$, that is a root, is also real. Moreover, since $v$ is an eigenvector of $\varphi$, up to a real scalar,
all the coordinates of $v$ in a basis of $L$ belong to $K=\Q(\lambda)$.
Remark that either $K=\Q$, or $K\subset \R$ and $[K:\Q]=2$. \medskip

Let $K_v$ be the subring of ${\rm End}({\mathbb A})\otimes \Q$ consisting of self-adjoint endomorphisms of ${\mathbb A}$ for which $v$ is an eigenvector. For any $f\in K_v$, the matrix of $f$ in the decomposition $\C^2=S\oplus S'$ has the form $\left(\begin{smallmatrix} \lambda(f) & 0 \\ 0 & \lambda'(f) \end{smallmatrix}\right)$.\medskip

We claim that $K_v$ is either isomorphic to $K$ or to $\Q^2$. To see this, it suffices to notice that each  element of $K_v$ is uniquely determined by its eigenvalues on $S$ and $S'$. If $\lambda\in \Q$ then we can assume that all the coordinates of $v$ belong to $\Q$, hence both $\lambda(f)$ and $\lambda'(f)$ belong to $\Q$ as $f$ is defined of over $\Q$. Thus $\Lambda: K_v \rightarrow \Q^2, \quad f\mapsto (\lambda(f),\lambda'(f))$ is an isomorphism of $\Q$ vector spaces.  If $\lambda \not\in \Q$, then $\lambda \in K=\Q(\sqrt{d})$, with $d\in \N$, $d$ is not a square.  It follows that $\lambda'(f)$ is the Galois conjugate of $\lambda(f)$ in $K$. Consequently, $\Lambda: K_v \rightarrow K, \quad f \mapsto \lambda(f)$ is an isomorphism of $\Q$-vector spaces. \medskip

Set $\Ord=K_v\cap {\rm End}({\mathbb A})$. By definition, $\Ord$ is the unique self-adjoint proper subring of $\mathrm{End}(\mathbb{A})$ for which $v$ is an eigenvector. It remains to show that $\Ord \simeq \Z[X]/(X^2+bX+c)$ for some $b,c \in \Z$, such that $b^2-4c>0$. We have $\dim_\Q K_v=2$. For any $f\in \Ord$ such that $(f,\id)$ is a basis of $K_v$ as a $\Q$-vector space, we denote by $\Delta(f)$ the discriminant of the minimal polynomial of $f$. Note that we have $\Delta(f)>0$, and $\Delta(f)=(\lambda(f)-\lambda'(f))^2$. Set $D=\min\{\Delta(f): \; f \in \Ord, \ \ (f,\id) \text{ is a basis of } K_v\}$. Let  $\psi$  be an element of $\Ord$ such that $\Delta(\psi)=D$. Let us show that $\Ord =\Z\psi+\Z\id$. Indeed, let $f$ be an element of $\Ord$, then we can write $f=x\psi+y\id$, with $(x,y) \in \Q^2$, and $\Delta(f)=x^2D$. If $x\not\in \N$, by replacing $\psi$ by $f-[x]\psi$, we can find $\psi'\in \Ord$ such that $0<\Delta(\psi')<D$, therefore we must
have $x\in \N$. It follows that $y\in\N$. Finally, since $\psi\in {\rm End}({\mathbb A})$, the minimal polynomial of $\psi$ has the the form $\psi^2+b\psi+c\id$, with $b,c\in \Z$ such that $D=b^2-4c$. The proof of the lemma is now complete.
\end{proof}

\begin{Lemma}
\label{lm:2inv:tor:cover}
Let $(X,\omega) \in \H(2,2)\sqcup\H(1,1,2)$, and $\tau_1,\tau_2$ be two Prym involutions of $X$ such that $\tau_1^*\omega=\tau_2^*\omega=-\omega$.
\begin{itemize}
 \item [(a)] If $(X,\omega) \in \H(1,1,2)$ then $\tau_1=\tau_2$.

 \item[(b)] If $(X,\omega)\in \H(2,2)^{\rm odd}$ and $\tau_1\neq \tau_2$, then there exists a branched cover
$p:X \rightarrow Y$ of degree three, where  $Y$ is a torus, which is ramified only at the zeros of $\omega$ and satisfies $\omega=p^*\xi$, where  $\xi$ is a holomorphic one-form on $Y$. Moreover, the involutions $\tau_1,\tau_2$ descend to the unique involution of $Y$ which acts by $-\id$ on the homology
and exchanges the images by $p$ of the zeros of $\omega$.
 \end{itemize}
\end{Lemma}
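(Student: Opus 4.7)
The plan is to study, in both parts, the composition $\sigma := \tau_1\circ\tau_2$, which is a biholomorphism of $X$ satisfying $\sigma^*\omega=\omega$. Two general facts drive the analysis: $\sigma$ has finite order (since $\Aut(X)$ is finite in genus $3$); and if $\sigma$ fixes a point $x$, then either $x$ is a regular point of $\omega$ and $\sigma$ equals the identity in a translation chart around $x$ (hence globally), or $x$ is a zero of order $k$ and $\sigma$ is locally conjugate to a rotation $z\mapsto\zeta z$ with $\zeta^{k+1}=1$.

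For part (a), the earlier observation that any Prym involution in $\Prym(1,1,2)$ exchanges the two simple zeros $p_1,p_2$ and must fix the double zero $q$ shows that $\sigma$ fixes $p_1,p_2$ and $q$. At each $p_j$ the local rotation order divides $2$, and at $q$ it divides $3$. If $\sigma\neq\id$, then the local order at some $p_j$ must be exactly $2$ (order $1$ would force $\sigma=\id$ near $p_j$ and hence globally), whence $\sigma^2=\id$ on a neighborhood of $p_j$ and therefore globally. But then the local order of $\sigma$ at $q$ divides $\gcd(2,3)=1$, so $\sigma$ is trivial near $q$ and hence globally---contradicting $\sigma\neq\id$. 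Therefore $\tau_1=\tau_2$.

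For part (b), the analogous observation for $\Prym(2,2)^{\rm odd}$ gives that both involutions exchange the two double zeros $q_1,q_2$, so $\sigma$ fixes both. Assuming $\tau_1\neq\tau_2$, the same ``trivial local order is impossible'' argument forces the local order of $\sigma$ at each $q_j$ to be exactly $3$. No other fixed points can exist: regular fixed points are excluded and there are no other zeros. Consequently $\sigma$ has global order $3$ with fixed set exactly $\{q_1,q_2\}$, each of local rotation index $3$. Riemann--Hurwitz applied to the quotient $p\colon X\to Y:=X/\langle\sigma\rangle$ reads $2\cdot 3-2=3(2g(Y)-2)+2(3-1)$, giving $g(Y)=1$. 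Since $\sigma^*\omega=\omega$, the form $\omega$ descends to a holomorphic $1$-form $\xi$ on $Y$ with $p^*\xi=\omega$, and $p$ is ramified exactly over the two points $p(q_1),p(q_2)$.

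It remains to descend $\tau_1,\tau_2$ to $Y$ and identify their images. A direct computation gives $\tau_1\sigma\tau_1=\tau_1(\tau_1\tau_2)\tau_1=\tau_2\tau_1=\sigma^{-1}$, so $\tau_1$ normalizes $\langle\sigma\rangle$ and descends to some $\bar\tau_1\in\Aut(Y)$; similarly for $\bar\tau_2$. The identity $\tau_2=\tau_1\sigma$ combined with $\sigma$ projecting to $\id_Y$ yields $\bar\tau_1=\bar\tau_2=:\bar\tau$. From $\tau_i^*\omega=-\omega$ one deduces $\bar\tau^*\xi=-\xi$, so in a flat coordinate $z$ on $Y$ the map $\bar\tau$ is of the form $z\mapsto -z+c$ and acts by $-\id$ on $H_1(Y,\Z)$. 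Requiring $\bar\tau$ to swap $p(q_1)$ and $p(q_2)$ determines $c$ modulo the period lattice, which gives uniqueness. The main obstacle is the local analysis at the zeros when $\sigma\neq\id$: once one carefully combines the dichotomy between trivial and non-trivial local action with the coprimality of the cone-angle constraints at zeros of different orders, the remainder reduces to Riemann--Hurwitz together with a short group-theoretic descent computation.
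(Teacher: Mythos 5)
Your proof is correct and follows essentially the same route as the paper's: form $\sigma=\tau_1\circ\tau_2$, analyze its linearization at the fixed zeros using $\sigma^*\omega=\omega$ and the cone-angle constraints, apply Riemann--Hurwitz to identify the quotient as a torus, and descend the involutions via the normalization $\tau_1\sigma\tau_1=\sigma^{-1}$. The only (harmless) variations are that in part (a) you run the local-order argument at the simple zeros with a gcd step where the paper works directly at the double zero, and in part (b) you pin down the fixed locus exactly before Riemann--Hurwitz where the paper uses $g(Y)\ge 1$ and an inequality chain.
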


\begin{proof}[Proof of Lemma~\ref{lm:2inv:tor:cover}]
Set $\tau=\tau_1\circ\tau_2$, one has $\tau^*\omega=\omega$ and $\tau$ fixes all the zeros of $\omega$. We identify the neighborhood of a double zero $P$ of $\omega$ with the unit disk $\Delta \subset \C$ such that $P$ is mapped to $0$. In this local chart, $\omega=z^2dz$. If $\omega \in \H(1,1,2)$ then  $P$ is the unique double zero of $\omega$, therefore $\tau_1(P)=\tau_2(P)=P$. Since both $\tau_1,\tau_2$ are involutions, their restrictions to this neighborhood of $P$ read $\tau_i(z)=-z$. Thus $\tau(z)=z$, which implies that $\tau=\id_X$ and $\tau_1=\tau_2$. \medskip

From now on, we will assume that $\omega \in \H(2,2)^{\rm odd}$ and $\tau_1\neq \tau_2$.
The restriction of $\tau$ to the local chart of $P$ (defined above) can be written as $\tau(z)=\zeta z$. Since $\tau^*\omega=\omega$, it follows that $\zeta^3=1$. Obviously $\zeta \neq 1$, otherwise $\tau$ is the identity map in a neighborhood of $P$ and hence it is
the identity on $X$ implying $\tau_1 = \tau_2$. Let $p: X\rightarrow Y=X/\langle \tau \rangle$ be the quotient map. Since $\tau$ has order three,
$p$ is a ramified covering of degree $3$. Clearly, the two zeros of $\omega$ are branched points of $p$
of order $3$. Moreover since $\dim \ker(\tau-\id) \geq 1$, one has $ \mathrm{genus}(Y) \geq 1$. These two facts, combined
with the Riemann-Hurwitz formula
$$
-4 = 2-2\cdot \mathrm{genus}(X)=3\cdot(2-2\cdot \mathrm{genus}(Y))-\sum_{x\in X} (e_p(x)-1) \leq -\sum_{x\in X} (e_p(x)-1) \leq -4
$$
implies that $\mathrm{genus}(Y)=1$ and the two zeros of $\omega$  are the only branched points of $p$.
Since $\tau^*\omega=\omega$, the form $\omega$ descends to a holomorphic $1$-form $\xi$ on $Y$
{\em i.e.} $\omega=p^*\xi$. \medskip

Now the subgroup of $\mathrm{Aut}(X)$ generated by $\tau$, namely $\{\id, \tau_1\circ\tau_2,\tau_2\circ\tau_1\}$,
is clearly invariant by the conjugations by $\tau_1$ and $\tau_2$. Therefore $\tau_1$ and $\tau_2$ induces two involutions,
say $\iota_1$ and $\iota_2$, on $Y$. Since $\tau_i$ permutes the zeros of $\omega$, the
equality $\tau_i^*\omega=-\omega$ reads $\iota_i^*\xi=-\xi$, and $\iota_i$ exchanges the images of the zeros of $\omega$ by $p$.
Now $Y$ is a torus: there  exists only one such involution. Hence $\iota_1=\iota_2$. The lemma is then proved.
\end{proof}

\begin{proof}[Proof of Theorem~\ref{th:2involutions:D9}]
From Lemma~\ref{lm:2inv:tor:cover} it is sufficient to assume that $(X,\omega)\in \H(2,2)^{\rm odd}$. For simplicity we
continue with the notions of the previous lemma. Let $P,Q\in X$ denote the zeros of $\omega$.
We claim that there exists a basis of $H_1(Y,\Z)$ given by a pair of simple closed geodesics $\{\alpha, \beta\}$
that are invariant by $\iota$ and that do not contain $p(P)$ and $p(Q)$. Indeed one can pick a fixed point of $\iota$ and take a pair of simple
closed geodesics passing through this point and missing the point $p(P)$ and $p(Q)=\iota(p(P))$. \medskip

The next goal is to construct a symplectic basis of $X$ and a self-adjoint endomorphism of $\Prym(X,\tau_1)$.
We lift the curves $\alpha$, $\beta$ to $X$ in the following manner. Let $R\in Y$ be
the (unique) intersection point of $\alpha$ with $\beta$. Hence $\iota(R)=R$ and $R$ is a
regular point of the covering $p$.
Since $p\circ \tau_1 = \iota\circ p$, the involution $\tau_1$ induces a permutation (of order two) of
$p^{-1}(R)=\{R_0,R_1,R_2\}$. Therefore $\tau_1$ fixes some point, say $R_0$. \medskip

Let $\alpha_0$ (respectively, $\beta_0$) be the pre-image of $\alpha$ (respectively, $\beta$)
passing through $R_0$. For $j=1,2$ let $\alpha_j=\tau^j(\alpha_0), \beta_j=\tau^j(\beta_0)$, where $\tau=\tau_1\circ\tau_2$.
By construction:
$$
\left\{ \begin{array}{lll}
\langle \alpha_i, \alpha_j \rangle =\langle \beta_i, \beta_j \rangle = 0 &\textrm{ for } &i,j \in \{0, 1,2\}, i\neq j,\\
\langle \alpha_i, \beta_j \rangle =\delta_{ij} &\textrm{ for } &i,j \in \{0, 1,2\}.
\end{array} \right.
$$
Hence $(\alpha_0,\beta_0,\alpha_1,\beta_1, \alpha_2,\beta_2)$ is a symplectic basis of $H_1(X,\Z)$.
This allows us to construct a symplectic basis $(a_0,b_0,a_1,b_1)$ of $H_1(X,\Z)^{-} = \ker(\tau_1+\mathrm{id})$ as usual:
$$
\left\{\begin{array}{ll}
a_0 = \alpha_0 & b_0 = \beta_0 \\
a_1 = \alpha_1+\alpha_2 & b_1 = \beta_1+\beta_2
\end{array} \right.
$$
The intersection form is given by the matrix
$\left(\begin{smallmatrix} J & 0 \\ 0 & 2J\\ \end{smallmatrix}\right)$. One can normalize by using $\GL^+(2,\R)$ so that $\xi(\alpha)=1$ and $\xi(\beta)=\imath\in\C$.
Then $\omega$ (viewed as an element of  $H^1(X,\C)^{-}$) is the vector (in the basis dual to $(a_0,b_0,a_1,b_1)$)
$$
(\omega(a_0),\omega(b_0),\omega(a_1),\omega(b_1))   = (1,\imath,2,2\imath).
$$
It is straightforward to check that $(X,\omega)$ coincide with %$\(\begin{smallmatrix} 1 & t \\ 0 & 1 \end{smallmatrix}\right)
$\Sur_{(2,2)}(1,1,-1) \in \Omega E_9(2,2)$.

%Since $\omega(\alpha_i)=\xi(\tilde{\alpha}), \omega(\beta_i)=\xi(\tilde{\beta}), \; i=0,1,2$.
Let us consider the matrix
$$
T=\left(\begin{smallmatrix} -\id_2 & 2\cdot \id_2 \\ \id_2 & 0 \end{smallmatrix}\right)_{(a_0,b_0,a_1,b_1)}
$$
It is straightforward to check that $T$ is self-adjoint with respect to the restriction of the intersection form on
$H_1(M,\Z)^-$. Moreover $(1,\imath,2,2\imath)\cdot T=(1,\imath,2,2\imath)$ thus $\omega$ is an
eigenform for $T$: hence $(X,\omega) \in \PrD(2,2)^{\rm odd}$. Since $T^2=-T+2$, the endomorphism $T$ generates
a proper subring of $\mathrm{End}(\mathrm{Prym}(M,\tau_1))$ isomorphic to $\Ord_D$ where
$D=1+4\cdot 2=9$. By Lemma~\ref{lm:order:unique}, this subring is unique. \medskip

The same argument shows that $\mathrm{End}(\mathrm{Prym}(M,\tau_2))$ also contains a unique self-adjoint proper subring isomorphic
to $\Ord_9$, for which $\omega$ is an eigenform. The proof of the theorem is now complete.
\end{proof}
%************************************************************************************************************
%************************************************************************************************************
%************************************************************************************************************
\section{Non connectedness of $\PrD(\kappa)$}
\label{sec:non:connected}

In this section we will show that when $D\equiv 1 \mod 8$, the number of components of $\PrD(2,2)^{\rm odd}$ and $\PrD(1,1,2)$ is at least two. It is worth noticing that this is not true in genus two, namely,  $\PrD(2)$ has two connected components, while $\PrD(1,1)$ is connected (see~\cite{Mc4}).

\begin{Theorem}
\label{thm:Dodd:disconnect}
For any $D\geq 9$ satisfying $D\equiv 1 \mod 8$, the loci $\PrD(2,2)^{\rm odd}$ and $\PrD(1,1,2)$  are not connected.
\end{Theorem}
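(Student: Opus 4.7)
The plan is to exhibit a locally constant $\Z/2$-valued invariant on $\PrD(\kappa)$ taking both values, adapting the invariant that distinguishes the two components of $\PrD(4)$ in~\cite{LanNg11}.

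The invariant is algebraic: it is built from the Prym lattice $H_1(X,\Z)^-$ equipped with its restricted intersection form and the action of the generator of $\Ord_D\subset \End(\Prym(X,\tau))$. For $D\equiv 1\mod 8$, the minimal polynomial of a generator $T$ has the form $T^2-eT-2wh\cdot\id=0$ with $D=e^2+8wh$ and $e$ odd; the invariant reads off the class $[e]$ modulo $4$ (suitably normalised to cancel the ambiguity $T\mapsto \pm T+k$). This recipe is purely algebraic and depends only on the polarised $\Ord_D$-module $H_1(X,\Z)^-$, which by Lemma~\ref{lm:order:unique} is uniquely determined by $(X,\omega)$. Hence the same construction yields a function $\epsilon:\PrD(\kappa)\to\Z/2\Z$.

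To see that $\epsilon$ is locally constant I would apply Proposition~\ref{prop:neighbor}: a neighborhood of $(X,\omega)$ in $\PrD(\kappa)$ is swept out by the $\GL^+(2,\R)$-action and the kernel foliation. Both operations fix the absolute periods (in particular the real multiplication data), so the polarised $\Ord_D$-module $H_1(X,\Z)^-$ is locally constant, and hence so is $\epsilon$.

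To realise both values I would invoke Lemma~\ref{lm:example:D} with two triples $(w,h,e)$ and $(w',h',e')$ satisfying $D=e^2+8wh=(e')^2+8w'h'$ and $e\not\equiv e'\mod 4$. For $D=9$ the triples $(1,1,1)$ and $(1,1,-1)$ both meet the hypotheses of Lemma~\ref{lm:example:D}; for $D\geq 17$ with $D\equiv 1\mod 8$, pairs of this kind are easy to produce by varying $e$ among odd integers with $|e|<\sqrt{D}$. The surfaces $\Sur_\kappa(w,h,e)$ and $\Sur_\kappa(w',h',e')$ then carry different values of $\epsilon$, so they must lie in distinct components of $\PrD(\kappa)$.

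The main obstacle is the rigidification of the generator $T$: since $T\mapsto T+1$ shifts $e$ by $2$ and changes its class modulo $4$, one must select a canonical generator. The natural way is to impose the positivity condition $T^*\omega=\lambda\omega$ with $\lambda>0$, or equivalently to describe $T$ intrinsically through its action on $H_1(X,\Z)^-/2H_1(X,\Z)^-$ together with the mod-$4$ reduction of the symplectic form (which, because of the polarisation $\mathrm{diag}(J,2J)$ noted in Remark~\ref{rk:involution}, sees additional structure absent in genus two). Verifying that this rigidification is compatible with the operations generating $\PrD(\kappa)$ locally, so that $\epsilon$ descends to a well-defined locally constant function, is the technical heart of the argument.
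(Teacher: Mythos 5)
Your overall strategy coincides with the paper's: a locally constant algebraic invariant attached to the real multiplication data, local constancy via Proposition~\ref{prop:neighbor} (both the $\GL^+(2,\R)$-action and the kernel foliation preserve the lattice $H_1(X,\Z)^-$, the involution, and the order), and realization of both values on the explicit surfaces $\Sur_\kappa(w,h,e)$ and $\Sur_\kappa(w,h,-e)$ of Lemma~\ref{lm:example:D}. However, the invariant you propose is not well defined as stated, and the step you defer as ``the technical heart'' is exactly where your recipe breaks. A generator $T$ of $\Ord_D\subset\End(\Prym(X,\tau))$ is only determined up to $T\mapsto \pm T+k\,\mathrm{Id}$, and $T\mapsto T+k\,\mathrm{Id}$ replaces $e$ by $e+2k$, so $[e]\in\Z/4\Z$ is not an invariant of the polarised $\Ord_D$-module. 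Your proposed rigidification by requiring $T^*\omega=\lambda\omega$ with $\lambda>0$ does not remove this ambiguity: if $\lambda>0$ then $(T+\mathrm{Id})^*\omega=(\lambda+1)\omega$ with $\lambda+1>0$ as well. So no canonical generator, hence no function $\epsilon$, has actually been produced.

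The paper avoids defining an abstract invariant altogether. Assuming a path joins $\Sur_\kappa(w,h,-e)$ to $\Sur_\kappa(w,h,e)$, it lifts the path to Teichm\"uller space to conclude that $S_s$, $\tau_s$, $L_s$ and $\mathfrak{i}_s$ are constant along the path; then it pins down a \emph{single} endomorphism by prescribing simultaneously its minimal polynomial $X^2-eX-2wh$ \emph{and} its eigenvalue $\lambda=(e+\sqrt{D})/2$ on $S=\C\cdot\omega_0$ (by the eigenspace decomposition $S\oplus S'$ such data determine the endomorphism uniquely inside $\mathfrak{i}_0(\Ord_D)$). This forces $T'_1=T_0+e\,\mathrm{Id}$, and the contradiction is that $\langle\,,\rangle$ restricted to $\mathrm{Rg}(T_0+e\,\mathrm{Id})$ vanishes mod $2$ (the range mod $2$ is spanned by $a,b$ with $\langle a,b\rangle=2$) while $\langle T_1(a_0),T_1(b_0)\rangle\equiv\langle a_0,b_0\rangle\equiv 1\bmod 2$ since $e$ is odd. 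In other words, the quantity that does the work is the parity of the intersection form on the range of the normalised generator modulo $2$ (the invariant of~\cite{LanNg11}), not the residue of $e$ modulo $4$; the difference between $e$ and $-e$ enters only through the normalisation step $T_0\mapsto T_0+e\,\mathrm{Id}$. To repair your argument you would need to replace ``$[e]\bmod 4$'' by this range-parity computation, or else prove that your mod-$4$ recipe, after a genuine rigidification, agrees with it.
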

\begin{proof}[Proof of Theorem~\ref{thm:Dodd:disconnect}]
First of all by Corollary~\ref{cor:non:empty}, $\PrD(\kappa)$ is non-empty.
Before going into the details, we first explain why $\PrD(4)$ is not connected
when $D\equiv 1 \mod 8$ (see~\cite[Theorem~6.1]{LanNg11}).
For every surface $(X,\omega)\in\PrD(4)$ we denote by $S$ the subspace of
$H^1(X,\R)^{-} = \ker(\tau+\mathrm{Id}) \subset H^1(X,\R)$ generated by $\{\mathrm{Re}(\omega),\mathrm{Im}(\omega)\}$
and by $S'$ the orthogonal complement of $S$ with respect to the intersection form in $H^1(X,\R)^{-}$.
By definition there is an injective ring morphism $\mathfrak{i} : \Ord_D \rightarrow \mathrm{End}(\Prym(X,\tau))$ such that $\mathfrak{i}(\Ord_D)$ is a self-adjoint proper subring of $\mathrm{End}(\Prym(X,\tau))$, and for any $T \in \mathfrak{i}(\Ord_D)$, $S$ is an eigenspace of $T$.
It turns out that an element $T\in \mathrm{Im}(\mathfrak{i})$ is uniquely
determined by its minimal polynomial and by the eigenvalue of its restriction to $S$.
Indeed, the minimal polynomial of $T$ has degree two; thus if $T_{|S}=\lambda\id_S$ then $T_{|S'}=\lambda'\id_{S'}$,
where $\lambda$ and $\lambda'$ are the roots of the minimal polynomial of $T$. Therefore $T$ is given by the
matrix $\left(\begin{smallmatrix} \lambda & 0 \\ 0 & \lambda' \\ \end{smallmatrix}\right)$ in the decomposition
$H^1(X,\R)^-=S\oplus S'$. In~\cite[Section 6]{LanNg11} for each $D\equiv 1 \mod 8$, with $D\geq 17$,
we constructed two surfaces $(X_i,\omega_i) \in \PrD(4)$, $i=0,1$, where the corresponding generators of the order
$T_0 \in \mathrm{Im}(\mathfrak{i}_0)$ and $T_1 \in \mathrm{Im}(\mathfrak{i}_1)$ satisfies:
\begin{itemize}
\item $T_0$ and $T_1$ have the same minimal polynomial,
\item $T_{0|S_0}=\lambda\id_{S_0}$ and $T_{1|S_1}=\lambda\id_{S_1}$,
\item $\langle,\rangle_{|\mathrm{Rg}(T_0)} \neq 0 \mod 2$ and  $\langle,\rangle_{|\mathrm{Rg}(T_1)} = 0 \mod 2$.
\end{itemize}
Now if $(X_0,\omega_0)$ and $(X_1,\omega_1)$ belong to the
same connected component ({\em i.e.} $A\cdot (X_0,\omega_0) = (X_1,\omega_1)$ where
$A\in\mathrm{GL}^+(2,\R)$) then there exists an isomorphism
$f: H_1(X_0,\Z)^- \rightarrow H_1(X_1,\Z)^-$ such that $T'_1:= f^{-1}\circ T_1 \circ f$
defines an endomorphism of $\mathrm{Prym}(X_0,\tau_0)$. By uniqueness
of the Prym involution and the map $\mathfrak{i}_0 : \Ord_D \rightarrow \mathrm{End}(X_0,\tau_0)$,
it follows that both $T_0$ and $T'_1$ belong to $\mathfrak{i}_0(\Ord_D)$.
By construction $T_0$ and $T'_1$ have the same minimal polynomial and
$T_{0|S_0}=(T'_1)_{|S_0}= \lambda\id_{S_0}$. Thus in view of the above remark, $T_0=T'_1$.
This is a contradiction since $\langle,\rangle_{|\mathrm{Rg}(T_0)} \neq \langle,\rangle_{|\mathrm{Rg}(T'_1)} \mod 2$. \medskip

We now go back to the proof of Theorem~\ref{thm:Dodd:disconnect} and apply  similar ideas.
Let $(w,h,e)\in \Z^3$ be as in Lemma~\ref{lm:example:D} where $D=e^2+8wh \equiv 1 \mod 8$. Note that $e$ is odd.
We will show that the two surfaces
$$
(X_0,\omega_0):=\Sur_\kappa(w,h,-e) \in\Omega E_D(\kappa) \qquad \textrm{and} \qquad (X_1,\omega_1):=\Sur_\kappa(w,h,e) \in\Omega E_D(\kappa)
$$
do not belong to the same component.
Recall that by construction, for $j=0,1$, we have associated to $(X_j,\omega_j)$ a generator $T_j$ of
the order $\mathfrak{i}_j(\Ord_D) \subset {\rm End}(\Prym(X_j,\tau_j))$. Recall that, in the symplectic basis
$(a_0,b_0,a,b)$ of $H_1(X_j,\Z)$ given in Lemma~\ref{lm:example:D}, the endomorphism is given by the matrix
$$
T_0=\left(\begin{array}{cc}
-e\Id_2 & \left(\begin{smallmatrix} 2w & 0 \\ 0 & 2h \\ \end{smallmatrix}\right)\\
\left(\begin{smallmatrix} h & 0 \\ 0 & w \\ \end{smallmatrix}\right) & 0 \\
\end{array}\right) \qquad \textrm{and} \qquad
T_1=\left(\begin{array}{cc}
e\Id_2 & \left(\begin{smallmatrix} 2w & 0 \\ 0 & 2h \\ \end{smallmatrix}\right)\\
\left(\begin{smallmatrix} h & 0 \\ 0 & w \\ \end{smallmatrix}\right) & 0 \\
\end{array}\right) \textrm{ respectively }.
$$
Let us assume that there is a continuous path $\gamma : [0,1] \rightarrow \Omega E_D(\kappa)$ such that $\gamma(i)=(X_i,\omega_i)$ for $i=0,1$,
we will draw a contradiction.
Let $\tilde{\gamma}$ be a lift of $\gamma$ to the vector bundle $\Omega\mathcal{T}_3$ over the Teichm\"uller
space $\mathcal{T}_3$. We will denote by $(X_s,\omega_s)$ the image of $s\in [0,1]$ by $\tilde{\gamma}$.
Let $\Sigma$ be the base surface of the Teichm\"uller space.  By construction the path $\tilde{\gamma}$ induces
a continuous map which sends every $s\in [0,1]$ to a tuple $(\mathbf{J}_s,\tau_s,L_s,\mathfrak{i}_s,S_s)$, where
\begin{itemize}
\item $\mathbf{J}_s$ is the complex structure of $H^1(\Sigma,\R)$, induced by the complex structure of $X_s$,
\item $\tau_s\in \mathrm{Sp}(6,\Z)$ is the matrix which gives the action of the Prym involution of $X_s$ on $H^1(\Sigma,\R)$,
\item $L_s$ is the lattice $H^1(\Sigma,\Z)\cap \ker(\tau_s+\id)$,
\item $\mathfrak{i}_s : \Ord_D \rightarrow \mathrm{End}(H^1(\Sigma,\R)_s^-)$, where $H^1(\Sigma,\R)_s^-=\ker(\tau_s+\id) \subset H^1(\Sigma,\R)$, is an injective ring morphism where $\mathfrak{i}_s(\Ord_D)$ is a self-adjoint proper subring  of $\mathrm{End}(H^1(\Sigma,\R)_s^-)$ that preserves $L_s$.
\item $S_s$ is the subspace of $H^1(X,\R)^{-} = \ker(\tau_s+\mathrm{Id})$ generated by $\{\mathrm{Re}(\omega_s),\mathrm{Rg}(\omega_s)\}$.
\end{itemize}
Remark that since $\omega$ is holomorphic, $S_s$ is invariant by $\mathbf{J}_s$.
The action of $\GL^+(2,\R)$ preserves the subspace $S_s\subset H^1(\Sigma,\R)^-$, and the kernel foliation leaves
invariant $[\mathrm{Re}(\omega)]$ and $[\mathrm{Im}(\omega)]$. Therefore $S_s$ is invariant along the path
$\tilde{\gamma}$. Clearly, the matrix $\tau_s$ is also invariant along the deformation $\tilde{\gamma}$.
This implies that $L_s$ and $\mathfrak{i}_s$  are  also invariant  along $\tilde{\gamma}$.
In particular $S_0=S_1=S$ and $\mathfrak{i}_0=\mathfrak{i}_1$.
%This is a contradiction with above discussion, since $T_0\neq T'_1$. The theorem is then proved.

There exists an isomorphism $f: H_1(X_0,\Z)^- \longrightarrow H_1(X_1,\Z)^-$
satisfying $f(S_0)=S_1$ and such that $T'_1= f^{-1}\circ T_1 \circ f$ belongs to $\mathfrak{i}_0(\Ord_D) \subset\mathrm{End}(\Prym(X_0,\tau_0))$.
Remark that $T'_1$ and $T_0 + e\cdot \mathrm{Id}$ have the same minimal polynomial $X^2-eX-2wh$.
In addition the eigenvalues of $T'_1$ on $S_0=\C\cdot\omega_0$, and
the eigenvalue of $T_0+e\cdot \mathrm{Id}$ on $\C\cdot \omega_0$ are both equal to $\lambda = (e+\sqrt{D})/2$.
Hence $T'_1= T_0+e\cdot \mathrm{Id}$. \medskip

Now $\mathrm{Rg}(T_0+e\cdot \mathrm{Id}) \mod 2$ is generated by $\{a,b\}$. The restriction of the intersection form $\langle , \rangle$ to
this subspace is equal to $0 \mod 2$. On the other hand the restriction of the intersection form to $\mathrm{Rg}(T'_1)$ does not vanish modulo $2$:
$$
\langle T_1(a_0),T_1(b_0)\rangle \equiv \langle a_0, b_0 \rangle \equiv 1 \mod 2.
$$
This is a contradiction, and the theorem follows.
\end{proof}

In Section~\ref{subsec:connectedness} we will give a topological argument for the non connectedness of $\Omega E_9(2,2)^{\rm odd}$.

%************************************************************************************************************
%************************************************************************************************************
%************************************************************************************************************
\section{Collapsing zeros along a saddle connection}
\label{sec:admis}

In this section we describe a surgery on collapsing several zeros of Prym eigenforms together such that the resulting surface
is still a non degenerate Prym eigenform. This can be thought as the converse of the surgery
``breaking up a zero'' (see~\cite{Kontsevich2003}).

In what follows, all the zeros are {\em labelled} and all the saddle connections are {\em oriented}:
if $(X,\omega)\in \PrD(2,2)^{\rm odd}$, we label the zeros by $P$ and $Q$ and if
$(X,\omega)\in \PrD(1,1,2)$, we label the simple zeros by $R_1,R_2$ and the double zero by $Q$.
Let $\s_0$ be a saddle connection on $X$.
\begin{Convention}
\label{convention:saddle}
We will always assume that:
\begin{enumerate}
\item If $(X,\omega)\in \PrD(2,2)^{\rm odd}$ then $\s_0$ is a saddle connection from $P$ to $Q$ that is invariant by $\tau$.
\item If $(X,\omega)\in \PrD(1,1,2)$ then $\s_0$ is a saddle connection from $R_1$ to $Q$.
\end{enumerate}
\end{Convention}
Observe that such saddle connections always exist on any $(X,\omega)\in \PrD(\kappa)$: for $\kappa=(2,2)^{\rm odd}$, take $\s_0$ to be the union of a path of
minimal length from a regular fixed point of $\tau$ to a zero of $\omega$ and its image by $\tau$, for $\kappa=(1,1,2)$, take
a path of minimal length from the set $\{R_1,R_2\}$ to $\{Q\}$.

%************************************************************************************************************
\subsection{Admissible saddle connections}

We begin with the following definition.

\begin{Definition}
\label{def:simple:SC}
Let $(X,\omega)\in \Prym(\kappa)$ be a Prym form.
\begin{enumerate}
\item $\kappa=(2,2)^{\rm odd}$: we say that $\s_0$ is admissible if for any saddle connection $\s\neq \s_0$ from $P$ to
$Q$ satisfying $\omega(\s)=\lambda \omega(\s_0)$, with $\lambda \in \R_+$, one has $\lambda >1$.
\item $\kappa=(1,1,2)$: we say that $\s_0$ is admissible if, for any saddle connection
$\s\neq \s_0$ starting from $R_1$ and satisfying $\omega(\s)=\lambda \omega(\s_0)$, with $\lambda \in \R_+$,
either $\lambda >1$ if $\sigma$ ends at $Q$, or $\lambda >2$ if $\s$ ends at $R_2$.
\end{enumerate}
\end{Definition}

Observe that by definition the subset consisting of surfaces having an admissible saddle connection
is an open $\GL^+(2,\R)$-invariant subset.

\begin{Lemma}
\label{lm:twin:saddles}
Let $\kappa\in \{(2,2)^{\rm odd},(1,1,2)\}$. For any $(X,\omega)\in \PrD(\kappa)$ and any
$\s_0$ satisfying Convention~\ref{convention:saddle}, there exists in a neighborhood of $(X,\omega)$ a surface
$(X',\omega')\in \PrD(\kappa)$ with a saddle connection $\s'_0$ (corresponding to $\s_0$ and also satisfies Convention~\ref{convention:saddle}) such that
any saddle connection $\s'$ on $X'$ in the same direction as $\s'_0$, if it exists, satisfies
\begin{enumerate}
\item Case $\kappa=(2,2)^{\rm odd}$: $\omega'(\s')=\omega'(\s'_0)$.
\item Case $\kappa=(1,1,2)$:
\begin{enumerate}
\item If $\s'$ connects a simple zero to the double zero then $\omega'(\s')=\omega'(\s'_0)$.
\item If $\s'$ connects two simple zeros then $\omega'(\s')=2\omega'(\s'_0)$.
\end{enumerate}
\end{enumerate}
\end{Lemma}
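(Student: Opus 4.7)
The plan is to use the kernel foliation on $\PrD(\kappa)$, whose leaves are one-dimensional complex submanifolds by Proposition~\ref{prop:neighbor}. I would write a neighborhood of $(X,\omega)$ in its leaf as $(X,\omega)+w$ for $w\in\C$ with $|w|$ small, and study how the period of a saddle connection $\s$ joining two zeros of $\omega$ transforms under this deformation. The key fact, which I would first establish by a direct computation using $\tau$-anti-invariance of the deformation cocycle (together with the relations $\tau^*\omega=-\omega$ and $\tau(Q)=Q$ in the $(1,1,2)$ case), is that
\[\omega_{(X,\omega)+w}(\s)=\omega(\s)+m(\s)\cdot w,\]
where the integer $m(\s)$ depends only on the endpoints of $\s$. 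Concretely, after a suitable sign choice on the kernel foliation parameter: in $\H(2,2)^{\rm odd}$ one has $m(\s)=+1$ for every $P$-to-$Q$ saddle connection (and $-1$ for the reverse orientation); in $\H(1,1,2)$ one has $m(\s)=\pm 1$ for a connection between a simple zero and $Q$, and $m(\s)=\pm 2$ for a connection between the two simple zeros, the signs being fixed by the endpoint convention. In both cases the reference $\s_0$ of Convention~\ref{convention:saddle} has $m(\s_0)=1$.

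Next, for any saddle connection $\s\neq \s_0$ on $X$ whose period is initially parallel to $\omega(\s_0)$, say $\omega(\s)=\lambda\,\omega(\s_0)$ with $\lambda\in\R_{>0}$, I would compute the condition that $\omega'(\s)=\omega(\s)+m(\s)w$ and $\omega'(\s'_0)=\omega(\s_0)+w$ remain parallel. After rotating so that $\omega(\s_0)=a\in\R_{>0}$ and writing $w=x+\imath y$, an elementary calculation gives
\[\mathrm{Im}\!\left(\frac{\lambda a+m(\s)\,w}{a+w}\right)\;=\;\frac{y\,a\,\bigl(m(\s)-\lambda\bigr)}{|a+w|^2}.\]
Thus once $y\neq 0$, the ratio $\omega'(\s)/\omega'(\s'_0)$ is real if and only if $\lambda=m(\s)$. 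Reading this off in each case yields exactly the conclusion of the lemma: in $\H(2,2)^{\rm odd}$ the surviving parallel connections must have $\lambda=1$, hence $\omega'(\s')=\omega'(\s'_0)$; in $\H(1,1,2)$ the surviving connections have $\lambda=1$ if an endpoint is the double zero and $\lambda=2$ if both endpoints are simple, giving $\omega'(\s')=\omega'(\s'_0)$ or $2\omega'(\s'_0)$ respectively.

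To finish, I would choose $w$ small with $y\neq 0$ and generic. Small enough so that $\s_0$ persists as $\s'_0$ still satisfying Convention~\ref{convention:saddle}, and so that the finitely many saddle connections on $X$ of length below a fixed bound (large enough to contain all the relevant parallel ones) deform continuously. Genericity of $y$ is used to avoid the codimension-one set of parameters for which some non-parallel saddle connection becomes parallel to $\s'_0$; this is achievable because only finitely many such bad conditions arise from saddle connections of bounded length, and saddle connections that are long on $X$ stay long on $X'$. Combining these three steps yields the claim.

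The main obstacle I anticipate is establishing the linear-period formula $\omega'(\s)=\omega(\s)+m(\s)w$ with the correct factors $m(\s)\in\{\pm 1,\pm 2\}$: one must pin down the precise action of the one-dimensional kernel foliation on the relative $\tau$-anti-invariant cohomology $H^1(X,Z;\C)^-$, and in particular check that in $\H(1,1,2)$ the constraint forced by $\tau$-equivariance (namely $\omega(\gamma)+\omega(\tau(\gamma))=0$ for $\gamma$ running over $R_1$-to-$Q$ paths) really produces the factor $2$ for connections between the two simple zeros, as opposed to $0$ or $1$. Once that bookkeeping is correct, the rest of the argument is elementary and reduces to the scalar computation above.
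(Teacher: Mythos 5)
Your overall strategy --- deform along the kernel foliation and use a countability/genericity argument to destroy unwanted parallelisms --- is the same one the paper uses, and your ingredients are essentially correct: the linear period formula $\omega'(\s)=\omega(\s)+m(\s)\,w$ with $m(\s)\in\{\pm 1,\pm 2\}$ determined by the endpoints is exactly the action of the kernel foliation on relative periods, and the scalar computation showing that an initially parallel $\s$ with ratio $\lambda$ remains parallel only if $\lambda=m(\s)$ is fine.

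The gap is in your final step. The lemma quantifies over \emph{all} saddle connections of $X'$ in the direction of $\s'_0$, of arbitrary length, including ones that need not persist from $X$ (a relative class whose geodesic representative on $X$ is broken at a zero can be realized by a genuine saddle connection on $X'$ after the deformation). Your reduction to ``finitely many bad conditions arising from saddle connections of bounded length,'' together with ``long saddle connections stay long,'' does not close this: a long saddle connection parallel to $\s'_0$ with period, say, $100\,\omega'(\s'_0)$ is precisely what must be excluded, and its length is no obstruction to its existence. The repair is to argue at the level of homology classes rather than individual saddle connections, which is what the paper does: the set $\mathrm{Slope}(X,\omega)$ of slopes of nonzero \emph{absolute} periods is countable and is unchanged along the kernel foliation leaf (absolute periods are frozen), so one may choose $w$ so that the slope of $\omega'(\s'_0)=\omega(\s_0)+w$ avoids it. Then for \emph{any} saddle connection $\s'$ on $X'$ in the direction of $\s'_0$, either $\s'$ is a loop, giving a nonzero absolute period in the forbidden direction (impossible), or the class $[\s'_0*(-\s')]$ (respectively $[\s'_0*\tau(\s'_0)*(-\s')]$ in the $(1,1,2)$ case when $\s'$ joins the two simple zeros) is an absolute class whose period is parallel to $\omega'(\s'_0)$ and hence must vanish. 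This yields the conclusion in one stroke, with no need to track saddle connections from $X$ to $X'$ and no need for the explicit deformation formula; your computation of $m(\s)$ reappears only implicitly, in the choice of which closed path to form.
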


\begin{proof}[Proof of Lemma~\ref{lm:twin:saddles}]
For any vector $v\in\R^2$ small enough we denote by $\s'_0$ the saddle connection on
$(X',\omega')=(X,\omega)+v$ corresponding to $\s_0$. Observe that the set
$$
\mathrm{Slope}(X,\omega)=\{s \in \R\cup \{\infty\} : s \text{ is the slope of } \omega(\gamma) \neq 0, \text{ with } [\gamma]\in H_1(X,\Z)\}
$$
is countable. Hence there exists a vector $v\in\R^2$ small enough so that the slope of $\omega'(\s'_0)$
does not belong to the set $\mathrm{Slope}(X,\omega)=\mathrm{Slope}(X',\omega')$. \medskip

\noindent {\bf Case $\kappa=(2,2)^{\rm odd}$}. Let $\s'$ be a saddle connection starting from $P$ in the same direction as $\s'_0$.
If $\s'$ ends at $P$ then $[\s']\in H_1(X',\Z)$ and $0\neq \omega'(\s')$ has the same slope as $\omega'(\s'_0)$.
This is a contradiction. Thus $\s'$ ends at $Q$ and $[\gamma']=[\s'_0*(-\s')] \in H_1(X',\omega')$.
If $\omega'(\gamma')\neq 0$ then we get again a contradiction.
Therefore $\omega'(\gamma')= 0$ {\em i.e.} $\omega'(\s')=\omega'(\s'_0)$. \medskip

\noindent {\bf Case $\kappa=(1,1,2)$}. Let $\s'$ be a saddle connection starting from $R_1$ in the same direction as
$\s'_0$. If $\s'$ ends at $R_1$ we run into the same contradiction. If $\s'$ ends at $Q$ then we also run into
the same conclusion namely $\omega'(\s')=\omega'(\s'_0)$. Hence let us assume that $\s'$ ends
at $R_2$. Thus $\s'_0*\tau(\s'_0)*(-\s')$ is a closed path from $R_1$ to $R_1$ (through $Q$ and $R_2$).
Therefore $[\gamma']=[\s'_0*\tau(\s'_0)*(-\s')] \in H_1(X',\omega')$. The same contradiction shows that
$\omega'(\gamma')  = 0$, {\em i.e.} $\omega'(\s')=2\omega'(\s'_0)$. The lemma is proved.
\end{proof}

%*******************************************************************************
\subsection{Non-admissible saddle connection and twin/double-twin}

Lemma~\ref{lm:twin:saddles} leads to the following natural definition.

\begin{Definition}
Let $(X,\omega)\in \PrD(\kappa)$ and let $\s_0$ be a saddle connection on $X$ satisfying Convention~\ref{convention:saddle}. \\
If $\kappa=(2,2)^{\rm odd}$: a saddle connection $\s$ is a {\em twin} of $\s_0$ if $\s$ joins $P$ to $Q$ and $\omega(\s)=\omega(\s_0)$.\\
If $\kappa=(1,1,2)$:
\begin{enumerate}
\item a saddle connection $\s$ is a twin of $\s_0$ if it has the same endpoints and $\omega(\s)=\omega(\s_0)$.
\item a saddle connection $\s$ is a double-twin of $\s_0$ if $\s$ joins $R_1$ to $R_2$ and $\omega(\s)=2\omega(\s_0)$.
\end{enumerate}
\end{Definition}
When $(X,\omega)\in \PrD(2,2)^{\rm odd}$, since the angle between two twin saddle connections is a multiple of $2\pi$ and
 the angle at $P$ is $6\pi$, we see that each saddle connection $\s_0$ has at most two twins.
When $(X,\omega)\in \PrD(1,1,2)$, the same remark shows that $\s_0$ has at most one twin or one double-twin.
Moreover the midpoint of any double twin saddle connection is fixed by the Prym involution. \medskip

Non admissible saddle connection does not necessarily imply the
existence of a twin or double twin (see Remark~\ref{rk:no:twins} below). However Lemma~\ref{lm:twin:saddles}
shows that, under mild assumption, this dichotomy holds. As an immediate corollary, we draw

\begin{Corollary}
\label{cor:twin}
Let $\kappa\in \{(2,2)^{\rm odd},(1,1,2)\}$. For any connected component $\mathcal C$ of $\PrD(\kappa)$,
there exist $(X,\omega)\in \mathcal C$ and a saddle connection $\s_0$ on $(X,\omega)$
satisfying Convention~\ref{convention:saddle} such that either $\s_0$ is admissible, or
$\s_0$ has a twin or a double twin.
\end{Corollary}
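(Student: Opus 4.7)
The plan is to reduce the corollary to a direct case analysis using Lemma~\ref{lm:twin:saddles}, so essentially no new geometry is needed beyond what the lemma has already established.

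First I would pick any $(X,\omega)\in \mathcal{C}$ together with any saddle connection $\s_0$ satisfying Convention~\ref{convention:saddle}. As noted right after the Convention, such an $\s_0$ always exists (take, in the $(2,2)^{\rm odd}$ case, the concatenation of a minimal-length path from a regular fixed point of $\tau$ to a zero with its image under $\tau$, and in the $(1,1,2)$ case a minimal-length path from $\{R_1,R_2\}$ to $Q$). I would then apply Lemma~\ref{lm:twin:saddles} to this data, producing a nearby $(X',\omega')\in \PrD(\kappa)$ together with the corresponding saddle connection $\s'_0$. Because a connected component of $\PrD(\kappa)$ is open and $(X',\omega')$ can be taken arbitrarily close to $(X,\omega)$, we keep $(X',\omega')\in \mathcal{C}$.

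Next I would unpack Definition~\ref{def:simple:SC} on $(X',\omega')$. Assume $\s'_0$ is not admissible, so that there exists a saddle connection $\s'\neq \s'_0$ parallel to $\s'_0$, with $\omega'(\s')=\lambda\omega'(\s'_0)$, $\lambda\in \R_+$, violating the appropriate inequality: in the $(2,2)^{\rm odd}$ case, $\s'$ connects $P$ to $Q$ with $\lambda\leq 1$; in the $(1,1,2)$ case, $\s'$ starts at $R_1$ and either ends at $Q$ with $\lambda\leq 1$ or ends at $R_2$ with $\lambda\leq 2$.

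The last step is to apply the conclusion of Lemma~\ref{lm:twin:saddles} to pin down $\lambda$. In the $(2,2)^{\rm odd}$ case the lemma forces $\omega'(\s')=\omega'(\s'_0)$, so $\lambda=1$ and $\s'$ is a twin of $\s'_0$. In the $(1,1,2)$ case it forces $\omega'(\s')=\omega'(\s'_0)$ when $\s'$ ends at $Q$ (hence $\lambda=1$ and $\s'$ is a twin), and $\omega'(\s')=2\omega'(\s'_0)$ when $\s'$ ends at $R_2$ (hence $\lambda=2$ and $\s'$ is a double-twin). Thus either $\s'_0$ is admissible on $(X',\omega')$, or it has a twin or a double-twin, yielding the corollary.

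I do not expect any genuine obstacle: Lemma~\ref{lm:twin:saddles} already does the bulk of the work, and this corollary is essentially its dictionary translation into the admissible/twin language. The only mild point to verify is that the deformed surface stays in $\mathcal{C}$, which is handled automatically by openness of components of $\PrD(\kappa)$ together with the ``in a neighborhood'' statement of the lemma.
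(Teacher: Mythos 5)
Your proposal is correct and matches the paper's intent exactly: the paper gives no separate argument for Corollary~\ref{cor:twin}, presenting it as an immediate consequence of Lemma~\ref{lm:twin:saddles}, and your write-up simply makes that deduction explicit (existence of $\s_0$ satisfying Convention~\ref{convention:saddle}, perturbation within the component via the lemma, then reading off $\lambda=1$ or $\lambda=2$ from the period constraints). The one point worth spelling out, which you correctly note, is that $(X',\omega')=(X,\omega)+v$ stays in $\mathcal{C}$; this follows from Proposition~\ref{prop:neighbor}, since the kernel-foliation deformation gives a path inside $\PrD(\kappa)$.
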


%************************************************************************************************************
\subsection{Collapsing admissible saddle connections}

We have the following proposition that is a converse to the surgery ``breaking up a zero''
(see~\cite{Kontsevich2003}). We prove the proposition only in the setting of Corollary~\ref{cor:twin}.
A more general statement holds but it is not needed in this paper.

\begin{Proposition}
\label{prop:collapse}
Let $(X,\omega)\in \PrD(\kappa)$ be a Prym form and $\s_0$ a saddle connection satisfying
Convention~\ref{convention:saddle}. We assume that $\s_0$ is admissible. Then one can collapse the zeros of $\omega$
along $\s_0$ by using the kernel foliation so that the resulting surface belongs to $\PrD(4)$.

In particular, if there is no saddle connection in the same direction as $\s_0$ connecting two different zeros, then one can collapse $\s_0$ to get a surface in $\PrD(4)$.
\end{Proposition}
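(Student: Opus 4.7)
The plan is to deform $(X,\omega)$ along the one-complex-dimensional kernel foliation leaf inside $\PrD(\kappa)$ provided by Proposition~\ref{prop:neighbor}, moving in the direction that linearly shrinks $\s_0$, and to argue that the deformation can be continued until $\s_0$ has zero length. At that moment the endpoints of $\s_0$ (together with their $\tau$-images in the $(1,1,2)$ case) merge into a single zero of order four, producing a surface in $\PrD(4)$.

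First I would parametrize nearby surfaces in the leaf as $(X,\omega)+w$ for $w\in\C$ small, and use the $\tau$-invariance of $\s_0$ (respectively of the pair $\{\s_0,\tau(\s_0)\}$ in the $(1,1,2)$ case) to normalize the parametrization so that $\omega_w(\s_0)=\omega(\s_0)+w$ while all absolute periods remain fixed. Setting $w(t)=-t\omega(\s_0)$ then produces a path $(X_t,\omega_t)\in\PrD(\kappa)$ with $\omega_t(\s_0(t))=(1-t)\omega(\s_0)$, defined over some maximal interval $[0,t_0)$ with $t_0\leq 1$.

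The crux is to show $t_0=1$. If not, some saddle connection $\s\neq\s_0(t_0)$ on $(X_{t_0},\omega_{t_0})$ must collapse at $t=t_0$. Since closed geodesics and saddle connections joining a zero to itself have purely absolute holonomy (hence are invariant along the leaf), $\s$ must join two distinct zeros. A linear homological computation describes $\omega_t(\s)$ in each case: in the $(2,2)^{\rm odd}$ case, or for $\s$ joining a simple zero to $Q$ in the $(1,1,2)$ case with $\omega(\s)=\lambda\omega(\s_0)$, one finds $\omega_t(\s)=(\lambda-t)\omega(\s_0)$; for $\s$ joining $R_1$ to $R_2$ with $\omega(\s)=\lambda\omega(\s_0)$, the identity $[\s]-[\s_0]+[\tau(\s_0)]\in H_1(X,\Z)$ gives $\omega_t(\s)=(\lambda-2t)\omega(\s_0)$. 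In either case, the admissibility bound $\lambda>1$ or $\lambda>2$ of Definition~\ref{def:simple:SC} (combined with $\tau$-symmetry to handle saddle connections emanating from $R_2$) forces the vanishing time of $\s$ to be strictly greater than $1$, contradicting $t_0<1$. Lemma~\ref{lm:twin:saddles}, applied after an innocuous small perturbation preserving the open condition of admissibility, guarantees that the parallel saddle connections enumerated above exhaust the possible culprits.

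At $t=1$ the endpoints of $\s_0(t)$ coincide. For $\kappa=(2,2)^{\rm odd}$ the zeros $P,Q$ fuse into one zero of order $2+2=4$; for $\kappa=(1,1,2)$ the simultaneous collapse of $\s_0$ and $\tau(\s_0)$ merges $R_1,R_2,Q$ into a single zero of order $1+1+2=4$. The Prym involution extends continuously to the limit, and the real multiplication by $\Ord_D$ persists because it is determined by absolute periods and the topological $\tau$-action and because $\PrD$ is closed in $\Omega\mathcal{M}_3$; hence $(X_1,\omega_1)\in\PrD(4)$. The ``in particular'' clause is then immediate: if no parallel saddle connection between distinct zeros exists other than $\s_0$, Definition~\ref{def:simple:SC} is vacuous and $\s_0$ is automatically admissible. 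The main obstacle is the previous paragraph — converting the static admissibility of $\s_0$ at $t=0$ into the dynamic statement that $\s_0$ is the first saddle connection between distinct zeros to collapse along the path — which rests on the case-by-case linear tracking of $\omega_t(\s)$ outlined above.
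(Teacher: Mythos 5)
Your overall strategy --- flow along the kernel foliation in the direction $-\omega(\s_0)$ and show that $\s_0$ is the first saddle connection joining distinct zeros to collapse --- is a legitimate reformulation, and the period bookkeeping ($\omega_t(\s)=(\lambda-t)\omega(\s_0)$, resp.\ $(\lambda-2t)\omega(\s_0)$) is correct for any saddle connection that persists over an interval of the deformation. But there is a genuine gap at exactly the step you identify as the crux. Your enumeration of ``possible culprits'' applies Definition~\ref{def:simple:SC} \emph{at time $t=0$}, whereas the saddle connection responsible for a breakdown at time $t_0<1$ need only exist on $X_t$ for $t$ near $t_0$. Its relative homotopy class, pulled back to $t=0$, does have period $\lambda\omega(\s_0)$ with $0<\lambda\le 1$, but at $t=0$ that class may be represented by a \emph{broken} geodesic (a concatenation of saddle connections through intermediate zeros), to which the admissibility hypothesis --- which quantifies only over honest saddle connections --- says nothing. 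New saddle connections parallel to $\s_0$ can be created along the path each time a zero moves off the interior of such a broken representative, and Lemma~\ref{lm:twin:saddles} does not control this: it is a one-time genericity statement about periods of parallel saddle connections on a single nearby surface, not a statement that the set of parallel saddle connections is stable along the whole deformation. To close the argument you would need an induction on the first appearance time of a new parallel saddle connection joining distinct zeros, showing (e.g.\ by decomposing it at the moment of appearance into parallel pieces, at least one of which joins distinct zeros and is already controlled) that its period always exceeds $|\omega_t(\s_0)|$; as written, this step is asserted rather than proved. The same issue also threatens the endpoint $t=1$: one must rule out a \emph{second} parallel saddle connection collapsing simultaneously with $\s_0$ (which would pinch an absolute cycle and leave the stratum $\H(4)$), and that again requires controlling saddle connections born after $t=0$.

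For comparison, the paper sidesteps this difficulty entirely. It first applies $\mathrm{diag}(1,e^t)$ to make every non-horizontal saddle connection much longer than $\s_0$, then shortens $\s_0$ by small discrete kernel-foliation steps, each step being well defined because the balls $B(P,\delta)$, $B(Q,\delta)$ are embedded and disjoint, and each step preserving both the absolute periods and the differences $|\s|-|\s_0|$ for parallel $\s$; once $|\s_0|$ is small compared to the horizontal systole and to every other horizontal saddle connection, the collapse is performed by an explicit cut-and-paste on a rectangle neighborhood of a finite tree of horizontal rays, where one sees directly that nothing else degenerates and that the limit lies in $\H(4)$ with a surviving Prym involution. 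Your closedness argument for membership in $\Omega E_D$ at the end is fine; the missing piece is the dynamic control of newly created parallel saddle connections along the deformation.
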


\begin{proof}
We first consider the case when $(X,\omega)\in \PrD(2,2)^{\rm odd}$.
%We prove the proposition in a slightly more general case: we assume that for any other horizontal
%saddle connection $\sigma'$, one has $|\sigma'|>2|\sigma_0|$.
The proof we describe is constructive.
Set $\ell=|\s_0|$. As usual we assume that $\s_0$ is horizontal. By definition of admissible saddle connection, any other horizontal saddle connection from $P$ to $Q$ has length $> \ell$.

For any horizontal geodesic ray emanating from a zero of $\omega$ we say that the ray is {\em positive} if it has direction
$(1,0)$ and {\em negative} if it has direction $(-1,0)$. For instance by convention $\s_0$ is a positive
ray for $P$ and a negative ray for $Q$. Since the conical angle at $P$ and $Q$ is
$6\pi$, there are two other positive horizontal rays emanating from $P$, say $\s_{P,1}^+$ and $\s_{P,2}^+$,
as well as two other negative rays for $Q$, say $\s_{Q,1}^-$ and $\s_{Q,2}^-$.
We parametrize each ray by its length to the zero where it emanates. Obviously, if a positive ray intersects a negative ray then it corresponds to a (horizontal) saddle connection. \medskip

We will first prove the proposition under a slightly stronger condition
\begin{center}
($\mathcal{C}$) any horizontal saddle connection other than $\s_0$ has length $> 2\ell$.  
\end{center}
We will construct a set $\mathcal T$ which is a union of horizontal rays as follows:
the first element of $\mathcal T$ is $\s_0$. Now if a ray $\s_{P,1}^+$ or $\s_{P,2}^+$ intersects
$\s_{Q,1}^-$ or $\s_{Q,2}^-$ by assumption, the associated saddle connection has
length $\lambda> 2\ell$. Hence we can choose $\eps>0$ such that positive rays
$\s_{P,1}^+$ and $\s_{P,2}^+$ at time $\ell+\eps$ do not intersect $\s_{Q,1}^-$ and
$\s_{Q,2}^-$ at time $\ell+\eps$ in their interior. These are the next elements of $\mathcal T$. Finally
we consider the negative rays from $P$  and positive rays from $Q$ at time $\eps$. 
By the assumption, the union $\mathcal T$ of all of these rays is an embedded tree in $X$
(see Figure~\ref{fig:collapse:2zeros}).

We now consider a neighborhood $\mathcal T(\delta) = \{ p\in X;\ \mathrm{h}(p,\mathcal T)<\delta \}$ of
$\mathcal T$, where $\mathrm{h}$ is the distance measured in the vertical direction. For $\delta>0$ small enough,
$\mathcal T$ is a retract by deformation of $\mathcal T(\delta)$. We can easily construct $\mathcal T(\delta)$
from 10 Euclidian rectangles whose heights are equal to $\delta$ and widths are equal to $\ell+2\varepsilon$. 

We will now change the flat metric of $\mathcal T(\delta)$ without changing the metric outside of this neighborhood.
Given any $\ell' \in ]0,\ell[$, by varying the points where the rectangles are sewn, we can obtain a new surface
$(X',\omega')$ in $\PrD(2,2)^\mathrm{odd}$ with a saddle connection invariant by the involution whose length is equal to $\ell'$.
Note that the surfaces obtained from this construction belong the same leaf of the kernel foliation as $(X,\omega)$.

When $\ell'=0$, we get a new closed surface $(X_0,\omega_0) \in \H(4)$ sharing the same absolute periods as $(X,\omega)$. Moreover
there exists an involution $\tau_0$ on $X_0$ such that $\tau^*_0\omega_0=-\omega_0$. Hence
$(X_0,\omega_0) \in \PrD(4)$. \medskip

\begin{figure}[htb]
 \begin{minipage}[t]{0.3\linewidth}
 \begin{tikzpicture}[scale=0.2]
\foreach \x in {(-15,4), (-15,-4), (-13,0), (-10,2), (-10,-2)} \draw \x -- (-11,0);
\foreach \x in {(-1,4), (-1,-4), (-3,0), (-6,2), (-6,-2)} \draw \x -- (-5,0);
\draw (-11,0) -- (-5,0);
\draw (-8,0) node[above] {$\scriptstyle \ell$} (-10,2) node[above] {$\scriptstyle \eps$} (-6,2) node[above] {$\scriptstyle \eps$} (-10,-2) node[below] {$\scriptstyle \eps$} (-6,-2) node[below] {$\scriptstyle \eps$} (-13,0) node[left] {$\scriptstyle \eps$} (-3,0) node[right] {$\scriptstyle \eps$};
\draw (-15,4) node[above] {$\scriptstyle \ell+\eps$} (-15,-4) node[below] {$\scriptstyle \ell+\eps$} (-1,4) node[above] {$\scriptstyle \ell+\eps$} (-1,-4) node[below] {$\scriptstyle \ell+\eps$};
\end{tikzpicture}
\end{minipage}
 \begin{minipage}[t]{0.65\linewidth}
\begin{tikzpicture}[scale=0.35]
\foreach \x in {(-14,1), (-14,-1), (-8,1), (-8,-1), (-2,1), (-2,-1), (4,1), (4,-1), (10,1), (10,-1)} \filldraw[fill=blue!20!white!80] \x -- +(4,0) -- +(4,1) -- +(0,1) -- cycle;

\foreach \x in {(-13,1), (-13,0), (-7,1), (-7,0), (-1,1), (-1,0)} \filldraw[fill=white] \x circle (4pt);
\foreach \x in {(-11,1), (1,0), (7,1), (7,0), (13,1), (13,0)} \filldraw[fill=black] \x circle (4pt);

\draw (-13.5,0.5) node {$\scriptstyle \eps$} (-12,0.5) node {$\scriptstyle \ell$} (-10.5,0.5) node {$\scriptstyle \eps$};

\draw (-7.5,0.5) node {$\scriptstyle \eps$} (-5.5,0.5) node {$\scriptstyle \ell+\eps$} (-1.5,0.5) node {$\scriptstyle \eps$} (0,0.5) node {$\scriptstyle \ell$} (1.5,0.5) node {$\scriptstyle \eps$} (5.5,0.5) node {$\scriptstyle \ell+\eps$} (7.5,0.5) node {$\scriptstyle \eps$} (11.5,0.5) node {$\scriptstyle \ell+\eps$} (13.5,0.5) node {$\scriptstyle \eps$};
%%%
\draw[dashed] (-14,-1.5) -- (14,-1.5);

\def\a{-4}
\foreach \x in {(-14,1+\a), (-14,-1+\a), (-8,1+\a), (-8,-1+\a), (-2,1+\a), (-2,-1+\a), (4,1+\a), (4,-1+\a), (10,1+\a), (10,-1+\a)}
\filldraw[fill=blue!20!white!80] \x -- +(4,+0) -- +(4,1) -- +(0,1) -- cycle;

\foreach \x in {(-12,1+\a), (-12,0+\a), (-6,1+\a), (-6,0+\a), (0,1+\a), (0,0+\a), (6,1+\a), (6,0+\a), (12,1+\a), (12,0+\a)}
\filldraw[fill=gray] \x circle (4pt);
\draw (-13.25,1+\a) node[below] {$\scriptstyle \ell/2+\eps$}; % (-11,1+\a) node[below] {$\scriptstyle \ell/2+\eps$};
\end{tikzpicture}
\end{minipage}
\caption{Collapsing two zeros along a saddle connection invariant by $\tau$.}
\label{fig:collapse:2zeros}
\end{figure}
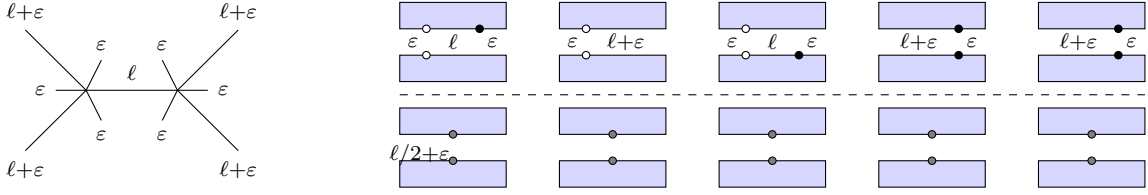

Let us now give the proof of the lemma without the additional condition $({\mathcal C})$. Using $\left(\begin{smallmatrix} 1 & 0 \\ 0 & e^t \end{smallmatrix} \right), \; t >0$, we can assume that any non-horizontal saddle connection has length $> 4\ell$. Set 
$$
\ell_0=\min\{|\s|, \; \s \text{ is a simple horizontal geodesic loop at $P$ or $Q$}\}.
 $$
Choose any $\delta \leq 1/2\min\{\ell, \ell_0\}$, and consider $B(P,\delta):=\{x \in X, \; \dist(x,P) < \delta\}$ and $B(Q,\delta):=\{x \in X, \; \dist(x,Q) < \delta\}$, where $\dist$ is the distance defined by flat metric.  By assumption, $B(P,\delta)$ and $B(Q,\delta)$ are two embedded topological disks in $X$ which are disjoint. Therefore, the surface $(X,\omega)+(-\delta,0)$, which is obtained by moving $P$ by $\delta/2$ to the right, and $Q$ by $\delta/2$ to the left, is well defined. In the new surface, any horizontal saddle connection from $P$ to $Q$ has length reduced by $\delta$, but the lengths of all horizontal geodesic loops are unchanged since they are absolute periods of $\omega$. Note also that if $\s$ is another horizontal  saddle connection joining $P$ to $Q$, then $|\s|-|\s_0|$ is also unchanged. It follows that after finite steps, we can find a surface in the horizontal leaf of $(X,\omega)$ such that $|\s_0| < 1/2\ell_0$ and $|\s_0| < 1/2|\s|$ for any other horizontal saddle connection $\s$ from $P$ to $Q$. We can now apply the above arguments to conclude.

\bigskip

We now turn into the case when $(X,\omega)\in \PrD(1,1,2)$. The construction is similar, we keep
the same convention: $\s_0$ is a positive ray for $R_1$ and a negative ray for $Q$. Note that $\tau(\s_0)$ is a horizontal saddle connection from $Q$ to $R_2$, it is a positive ray for $Q$ and negative ray for $R_2$. We denote
by $\s_{Q,i}^\pm$, $i=1,2,$ the two other positive/negative rays from $Q$, and $\s_{R_1}^+$ (resp. $\s_{R_2}^-$) the other
positive (resp. negative) ray from $R_1$ (resp. from $R_2$).

We again prove the proposition with a slightly stronger assumption  that for any other horizontal
saddle connection $\s$, one has $|\s|>4|\s_0|$.   We will construct a set $\mathcal T$ which is a union of positive/negative rays parametrized by the length to its origin. The first elements of $\mathcal T$ are $\s_0$ and $\tau(\s_0)$.  We then add to $\mathcal T$
\begin{itemize}
\item[$\bullet$]  the rays $\s_{R_1}^+$ and $\s_{R_2}^-$ a time $2\ell+\eps$

\item[$\bullet$] the negative rays from $R_1$ and positive rays from $R_2$ at time $\eps$.

\item[$\bullet$] the positive and negative rays from $Q$ other than $\s_0$ and $\tau(\s_0)$ at time $\ell+\eps$.
\end{itemize}

\noindent with $\eps >0$ small. Now if the ray $\s_{R_1}^+$ intersects
any negative horizontal ray then, by assumption, the associated saddle connection has
length $> 4\ell$. Hence we can choose $\eps > 0$ such that positive ray
$\s_{R_1}^+$ at time $2\ell+\eps$ does not intersect any negative ray from $Q$  at time $\ell+\eps$, nor any negative ray from $R_1$ or $R_2$ at time $2\ell+\eps$.   Similar arguments apply for other positive rays. It follows that  $\mathcal T$ is a tree.

We now consider a neighborhood $\mathcal T(\delta) = \{ p \in X;\ \mathrm{h}(p,\mathcal T)<\delta \}$ of
$\mathcal T$ where $\mathrm{h}$ is the distance measured in the vertical direction. For $\delta>0$ small enough,
$\mathcal T$ is a retract by deformation of $\mathcal T(\delta)$. We can easily construct $\mathcal T(\delta)$
from 10 Euclidian rectangles whose heights are equal to $\delta$ and widths are equal to $2\ell+2\varepsilon$.
As above we can change the flat metric of $\mathcal T(\delta)$ without changing the metric outside of this neighborhood.
The rest of the proof follows the same lines as the case $\kappa=(2,2)^{\rm odd}$.
\end{proof}

%************************************************************************************************************
\subsection{Twins and non connectedness of Prym eigenform loci when $D=9$}
\label{subsec:connectedness}

In this section we give another elementary proof of the non connectedness of
the loci $\Omega E_9(2,2)^{\rm odd}$ (Theorem~\ref{thm:Dodd:disconnect}).

\begin{proof}[Another proof of Theorem~\ref{thm:Dodd:disconnect}, case $\Omega E_9(2,2)$]
Set $X_0:=\Sur_{(2,2)}(1,1,-1)$ and $X_1:=\Sur_{(2,2)}(1,1,1)$ (see Lemma~\ref{lm:example:D}).
For $i=0,1$, let $\mathcal C_i$ be the connected component of $(X_i,\omega_i)$.

We claim that on any surface in $\mathcal C_0$, any saddle connection which connects the
two zeros of $\omega_0$ has exactly two twins. Since this property is not satisfied by
$(X_1,\omega_1)$ (the longest horizontal saddle connection on $(X_1,\omega_1)$
connects the zeros of $\omega_1$ and has no other twins) this will prove the theorem for $\Omega E_9(2,2)^{\rm odd}$.

By construction the surface $(X_0,\omega_0)$ has three distinct Prym involutions, each of which preserves
exactly one of the tori in the decomposition shown in Figure~\ref{fig:examples:D}.
By Lemma~\ref{lm:2inv:tor:cover} there exists a ramified covering $p : X_0 \rightarrow N_0$ of degree three
(ramified at the zeros of $\omega_0$)  where $N_0$ is a torus. Hence any saddle connection on $X_0$
which connect the zeros of $\omega_0$ has two other twins. For any surface in the kernel foliation leaf or in the
$\GL^+(2,\R)$-orbit of $(X_0,\omega_0)$, this property is clearly preserved (since surfaces still have three
Prym involutions). Thus $(X_1,\omega_1)$ does not belong to the component of $(X_0,\omega_0)$.
\end{proof}

\begin{Remark}
\label{rk:no:twins}
On the surface $\Sur_{(2,2)}(1,1,1)$, the longest horizontal saddle connection (the one that is contained in the boundary components of the bottom cylinder) satisfies Convention~\ref{convention:saddle}, and has no twins. It is not admissible since there are two other saddle connections in the same direction with smaller length. If we move this surface slightly in the kernel foliation leaf to break this parallelism, we will find a twin of this saddle connection (see Figure~\ref{fig:no:twin:not:adm}). This example shows that having a saddle connection satisfying Convention~\ref{convention:saddle} which has no twins does not imply the  existence of an admissible one. On the other hand, we know that $\Sur_{(2,2)}(1,1,1)$ belongs to $\Omega E_9(2,2)^{\rm odd}$ and $\Omega E_9(4)=\ety$, so there exists no admissible saddle connection on $\Sur_{(2,2)}(1,1,1)$.
\begin{figure}[htb]
\begin{minipage}[t]{0.4\linewidth}
\centering
\begin{tikzpicture}[scale=0.5, >=angle 60]
\draw (2,0) -- (5,0) -- (5,4) -- (2,4) -- (2,6) -- (3,6) -- (3,8) -- (2,8)  (1,8) -- (1,6) -- (0,6) -- (0,4) -- (1,4) -- (1,0);
\foreach \x in {(1,0), (1,4), (1,6), (1,8)} \draw[red] \x -- +(1,0);
\foreach \x in {(1,8), (3,8), (1,6), (3,6), (1,4), (5,4), (1,0), (5,0)} \filldraw[fill=white] \x circle(2pt);
\foreach \x in {(2,8), (2,6), (2,4), (2,0), (0,6), (0,4)} \filldraw[fill=black] \x circle(2pt);
\foreach \x in {(1,7), (0,5)} \draw[<->, dashed] \x -- +(2,0); \draw[<->, dashed] (1,2) -- (5,2);
\foreach \x in {(-0.2,4), (-0.2,6)} \draw[<->, dashed] \x -- +(0,2); \draw[<->, dashed] (-0.2,0) -- (-0.2,4);
\draw (-0.2,2) node[left] {$\scriptstyle 2$}; \draw (-0.2,5) node[left] {$\scriptstyle 1$} (-0.2,7) node[left] {$\scriptstyle 1$};
\draw (3,2) node[above] {$\scriptstyle 2$} (1,5) node[above] {$\scriptstyle 1$} (2,7) node[above] {$\scriptstyle 1$};
%\draw (2.5,-1) node {$\scriptstyle Y_{(2,2)}^9\in \Omega E_9(2,2)^{\rm odd}$};
\draw (3.5,0) node[below] {$\scriptstyle \s_0$};
\end{tikzpicture}
\end{minipage}
\begin{minipage}[t]{0.4\linewidth}
\centering
\begin{tikzpicture}[scale=0.5, >=angle 60]
\draw (2,-1) -- (5,0) -- (5,4) -- (2,3) -- (2,5) -- (3,6) -- (3,8) -- (2,7)  (1,8) -- (1,6) -- (0,5) -- (0,3) -- (1,4) -- (1,0);
\foreach \x in {(1,0), (1,4), (1,6), (1,8)} \draw[red] \x -- +(1,-1);
\draw[dashed] (0,5) -- (3,6);
\foreach \x in {(1,8), (3,8), (1,6), (3,6), (1,4), (5,4), (1,0), (5,0)} \filldraw[fill=white] \x circle(2pt);
\foreach \x in {(2,7), (2,5), (2,3), (2,-1), (0,5), (0,3)} \filldraw[fill=black] \x circle(2pt);
\draw (3.5,-1) node {$\scriptstyle \s_0$} (1.8,6.1) node {$\scriptstyle \s'_0$};
%\foreach \x in {(1,7), (0,5)} \draw[<->, dashed] \x -- +(2,0); \draw[<->, dashed] (1,2) -- (5,2);
%\foreach \x in {(-0.2,4), (-0.2,6)} \draw[<->, dashed] \x -- +(0,2); \draw[<->, dashed] (-0.2,0) -- (-0.2,4);
%\draw (-0.2,2) node[left] {$\scriptstyle 1$}; \draw (-0.2,5) node[left] {$\scriptstyle 1/2$} (-0.2,7) node[left] {$\scriptstyle 1/2$};
%\draw (3,2) node[above] {$\scriptstyle 1$} (1,5) node[above] {$\scriptstyle 1/2$} (2,7) node[above] {$\scriptstyle 1/2$};
%\draw (2.5,-1) node {$\scriptstyle Y_{(2,2)}^9\in \Omega E_9(2,2)^{\rm odd}$};
\end{tikzpicture}
\end{minipage}
\caption{On the left: $(X,\omega)=\Sur_{(2,2)}(1,1,1)$, on the right: $(X,\omega)+(0,\eps)$. In $(X,\omega)$, $\s_0$ has no twin, but in $(X,\omega)+(0,\eps)$ it has one.}
\label{fig:no:twin:not:adm}
\end{figure}
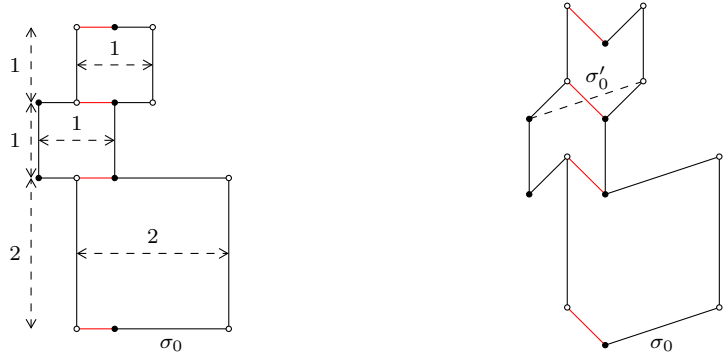

\end{Remark}

%************************************************************************************************************
\subsection{Twins and triple tori}

The next lemma provides a useful criterion to have triple tori from twin saddle connections
(see Section~\ref{sec:twin} for the definition of triple tori).

\begin{Lemma}
\label{lm:3twins:decomp:3tori}
Let $(X,\omega)$ be a translation surface and let $\s_0$ be a saddle connection on $X$ satisfying
Convention~\ref{convention:saddle}. We assume that $\s_1$ is a twin of $\s_0$ that is not invariant by $\tau$.
\begin{enumerate}
\item If $(X,\omega)\in \Prym(2,2)^\mathrm{odd}$ and $\s_1 \cup \tau(\s_1)$
is separating then the triple of saddle connections $\s_0,\s_1,\tau(\s_1)$ decomposes
$(X,\omega)$ into a triple of flat tori.

\item If $(X,\omega)\in \Prym(1,1,2)$  and $\s_0 \cup \s_1$ is separating then the pairs
$(\s_0,\s_1)$ and $(\tau(\s_0),\tau(\s_1))$ decomposes
$(X,\omega)$ into a triple of flat tori.
\end{enumerate}
\end{Lemma}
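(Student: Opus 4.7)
The strategy is to cut $(X,\omega)$ along the specified saddle connections and verify that each resulting piece is a flat torus, using the separating hypothesis, the Prym involution, and Euler-characteristic bookkeeping. In both cases, the saddle connections in play are mutually parallel: the twin condition gives $\omega(\sigma_1)=\omega(\sigma_0)$, and $\tau^*\omega=-\omega$ forces $\omega(\tau(\sigma_1))=-\omega(\sigma_1)$ (and similarly for $\tau(\sigma_0)$ in case~(2)).

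For case~(1), I would first cut $X$ along $\sigma_1\cup\tau(\sigma_1)$ into two pieces $A\sqcup B$ with $\sigma_0\subset A$; indeed, $\tau$ preserves the cut locus and hence permutes $\{A,B\}$, and since $\sigma_0$ is $\tau$-invariant and lies in only one piece, $\tau$ must fix each of $A$ and $B$. At $P$ and $Q$, the two cuts split the conical angle $6\pi$ into a $2\pi$ sector going into $B$ and a $4\pi$ sector going into $A$. Gluing the two boundary edges of $B$ back together yields a closed translation surface in which the $2\pi$-corners become interior regular points; it thus has no zeros and is a flat torus, making $B$ a slit torus. Next I cut $A$ along $\sigma_0$. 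From $\chi(A)=\chi(X)-\chi(B)=-3$ and the fact that cutting an arc increases $\chi$ by one, one obtains $\chi(A')=-2$. The four new boundary corners at $P$ and $Q$ can link up in only two combinatorial ways: either they form two boundary loops (the separating case, giving two pieces with $g_1+g_2=2$), or they form a single loop of four edges (the non-separating case, which would produce a connected surface of $\chi=-2$ with a single boundary, forcing $g=3/2$, absurd). Hence $\sigma_0$ separates $A$. Since the Prym involution acts at the midpoint of $\sigma_0$ as rotation by $\pi$ (the unique orientation-preserving involution of a disk with a fixed point), it swaps the two sides of $\sigma_0$ and therefore the two resulting pieces, forcing them to be isomorphic tori, i.e.\ $g_1=g_2=1$.

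For case~(2), the same strategy works. Cutting along the separating $\sigma_0\cup\sigma_1$ gives $A\sqcup B$, and the slit-torus argument for $B$ is identical, using that the corners at $R_1$ and $Q$ in $B$ have angle $2\pi$. By $\tau$-symmetry, $\tau(B)$ is likewise a slit torus bounded by $\tau(\sigma_0)\cup\tau(\sigma_1)$; since these two cycles meet only at the vertex $Q$, the interiors of $B$ and $\tau(B)$ are disjoint and $\tau(B)\subset A$. Cutting $A$ further along $\tau(\sigma_0)\cup\tau(\sigma_1)$ detaches $\tau(B)$ (which is one full side of this cycle) and leaves a residual piece $X_0$. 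A straightforward inclusion-exclusion, in which the contributions from the shared corner $Q$ cancel, gives $\chi(X_0)=\chi(X)-\chi(B)-\chi(\tau(B))=-2$; together with the fact that $X_0$ has two boundary components (one shared with each of $B$ and $\tau(B)$), this forces $X_0$ to be a genus-one surface with two slits, completing the three-tori decomposition.

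The main technical obstacle, specific to case~(1), is ruling out the non-separating possibility when cutting $A$ along the arc $\sigma_0$: one must track carefully how the four new boundary corners are linked, and the decisive point is the parity argument above, namely that the non-separating matching would produce a cut surface of $\chi=-2$ with a single boundary component, i.e.\ half-integer genus, which is impossible.
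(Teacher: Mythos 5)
Your treatment of the torus piece $B$, and your case (2), are essentially sound, but the decisive step of case (1) --- showing that the arc $\s_0$ separates the complementary piece $A$ --- has a genuine gap. You claim the four new boundary corners can only link up as ``two boundary loops, hence separating'' or ``one four-edge loop, hence non-separating and absurd by half-integer genus''. Neither half of this dichotomy is correct. On an oriented surface, cutting along an embedded arc joining two \emph{distinct} points of a single boundary circle always splits that circle into two boundary circles (here one sees this directly: the side of $\s_0$ adjacent to a given edge of $\partial A$ is the same at $P$ and at $Q$), so the four-edge single loop never occurs; and two boundary circles do \emph{not} imply that the cut is separating --- cutting a one-holed torus along a non-separating arc yields an annulus, which is connected with two boundary circles. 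The case your argument must exclude is therefore: $\s_0$ non-separating in $A$, the cut surface connected with $\chi=-2$ and two boundary components, i.e.\ a genus-one surface with two holes. This is consistent with every numerical constraint you invoke, and also with the $\tau$-symmetry (such a surface admits an involution exchanging its two boundary circles), so the parity argument does not close the proof. (Your handling of the separating case itself is fine: the $\tau$-swap of the two sides of $\s_0$ rules out $(g_1,g_2)=(0,2)$ and forces two tori.)

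The paper excludes the non-separating possibility by a different mechanism, which you would need to import or replace. Regluing the boundary of $A$ produces a closed genus-two surface $(X',\omega')\in\H(1,1)$ containing $\s_0$ and the reglued slit $\s'$, and carrying two involutions: the involution $\tau'$ induced by $\tau$ (two fixed points) and the hyperelliptic involution $\iota$ (six fixed points), so $\iota\neq\tau'$. One checks that $\iota(\s_0)=-\s'$ (if instead $\iota(\s_0)=-\s_0$, then $\tau'\circ\iota$ would be the identity near the zeros and hence on $X'$, forcing $\tau'=\iota$). Consequently the loop $\s_0*(-\s')$ is $\iota$-invariant as an oriented cycle; since $\iota_*=-\id$ on $H_1(X',\Z)$, its homology class is $2$-torsion, hence zero, hence the loop separates. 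This homological input, absent from your argument, is exactly what rules out the connected genus-one outcome.
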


\begin{proof}[Proof of Lemma~\ref{lm:3twins:decomp:3tori}]
As usual we assume first that $(X,\omega)\in \Prym(2,2)^{\rm odd}$.
Let $\s_2 = \tau(\s_1)$. We first begin by observing that $(X,\omega)$ is the connected sum of a flat torus
$(X_0,\omega_0)$ with a surface $(X',\omega')\in \H(1,1)$, along $\s_1\cup \s_2$. Indeed the saddle connections
$\s_1$ and $\s_2$ determine a pair of angle $(2\pi,4\pi)$ at $P$ and $Q$. Since $\tau$ permutes $P$ and $Q$, and
preserves the orientation of $X$, it turns out that the angles $2\pi$ at $P$ and the angle
$2\pi$ at $Q$ belong to the same side of $\s_1\cup \s_2$.

As subsurfaces of $X$, $X_0$ and $X'$ have a boundary that consists of the saddle connections $\s_1$ and $\s_2$. We can glue $\s_1$ and $\s_2$ together to obtain two closed surfaces that we continue to denote by $X_0$ and $X'$. We now have on $X_0$ (respectively, on $X'$) a marked geodesic segment $\s$ (respectively, a saddle connection $\s'$) that corresponds to the identification of $\s_1$ and $\s_2$. Note also that $\s_0$ is contained in $X'$.

The involution $\tau$ induces two involutions: $\tau_0$ on $X_0$ and $\tau'$ on $X'$. The involution
$\tau_0$ is uniquely determined by the properties $\tau_0(\omega_0)=-\omega_0$ and $\tau_0$ permutes the endpoints
of $\s$ (namely, $P$ and $Q$). Hence $\tau_0$ is the elliptic involution and has in particular $4$ fixed points:
the midpoint of $\s$ and three fixed points of $\tau$. Since $\tau$ has $4$ fixed points, $\tau'$ has exactly
$2$ fixed points: the midpoint of $\s_0$ ($\s_0$ is invariant by $\tau$) and the midpoint of $\s'$.

Let $\iota$ be the hyperelliptic involution of $X'$. Since $\iota$ has 6 fixed points, we derive $\tau'\neq \iota$. We claim that $\iota(\s_0)=-\s'$.
Indeed, $\iota(\s_0)$ is a saddle connection such that $\omega'(\iota(\s_0))=-\omega'(\s_0)$. Hence
$$
\iota(\s_0)=-\s_0 \qquad \mathrm{or} \qquad \iota(\s_0)=-\s'.
$$
If $\iota(\s_0)=-\s_0$ then $\tau'\circ\iota(\s_0)=\s_0$, hence $\tau'\circ\iota$ is the identity map in the neighborhoods of
$P$ and $Q$. Therefore $\tau'\circ\iota=\id_{X'}$: this is a contradiction since we know that  $\tau'\neq \iota$.
Now the closed curve $\s_0*(-\s')$ is preserved by $\iota$, hence it is separating. Cut $X'$ along this closed curve we obtain two flat tori $(X_1,\omega_1)$ and $(X_2,\omega_2)$. It is not difficult to see that $X_1$ and $X_2$ are exchanged by $\tau'$. By construction, $X$ is the connected sum of $X_0$, $X_1$, and $X_2$ which are glued together along the slits corresponding to $\s_0,\s_1, \s_2$.

The proof for the  case $(X,\omega)\in \Prym(1,1,2)$ is similar, it follows the same lines as the  above discussion.
\end{proof}

%************************************************************************************************************
%************************************************************************************************************
%************************************************************************************************************
\section{Collapsing surfaces to $\PrD(4)$}
\label{sec:collapsing}

The goal of this section is to establish the following theorem, which is a key step in the proof
of Theorem~\ref{thm:main}.
\begin{Theorem}
\label{thm:no:admisSC}
Let $\kappa \in \{(2,2)^{\rm odd},(1,1,2)\}$. Let $\mathcal C$ be a connected component of $\PrD(\kappa)$.
If for every surface $(X,\omega)\in\mathcal C$ there is no admissible saddle connection
on $(X,\omega)$ then $D\in \{9,16\}$. More precisely, under this assumption, $\mathcal C$ contains one of the following surfaces
(see Lemma~\ref{lm:example:D}):
$$
\Sur_\kappa(1,1,-1), \Sur_\kappa(1,1,1)\in \Omega E_9(\kappa), \qquad  \textrm{or}\qquad \Sur_\kappa(1,2,0),\Sur_{(2,2)}(2,1,0) \in \Omega E_{16}(\kappa).
$$
\end{Theorem}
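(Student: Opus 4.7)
The plan is to start from Corollary~\ref{cor:twin}, which furnishes in every component $\mathcal{C}$ some $(X,\omega)$ with a saddle connection $\s_0$ satisfying Convention~\ref{convention:saddle} that is either admissible, has a twin, or has a double twin. By hypothesis the first case is excluded, so $\s_0$ must have a twin $\s_1$ (or a double twin, if $\kappa=(1,1,2)$). I would first observe that, up to a small perturbation inside $\mathcal{C}$ (along the kernel foliation and a $\GL^+(2,\R)$-element close to the identity, using Proposition~\ref{prop:neighbor}), we may assume that the union of $\s_0$ with its twin (or of the relevant pair of twins) is separating: indeed, if it is non-separating, cutting along it would leave a lower-genus surface carrying a Prym structure of dimension too large, contradicting $\dim_\C \Omega(X,\tau)^-=2$.

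Once the twin cycle is separating, Lemma~\ref{lm:3twins:decomp:3tori} produces a three-tori decomposition of $(X,\omega)$, with the central torus $X_0$ fixed by $\tau$ and the two tori $X_1, X_2$ exchanged by $\tau$. Combining this decomposition with the real multiplication endomorphism $T$ furnished by $\PrD(\kappa)$, and reproducing the computation in the proof of Lemma~\ref{lm:example:D} in the natural symplectic basis adapted to the decomposition, I would show that $(X,\omega)$ is, up to $\GL^+(2,\R)$, of the form $\Sur_\kappa(w,h,e)$ for some triple $(w,h,e)\in\Z^3$ with $w,h\geq 1$, $\gcd(w,h,e)=1$, and $D=e^2+8wh$. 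Thus $\mathcal{C}$ contains a surface of the form $\Sur_\kappa(w,h,e)$.

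Next I would exploit the non-admissibility hypothesis on the concrete model $\Sur_\kappa(w,h,e)$. The idea is to enumerate, for each such model, candidate saddle connections satisfying Convention~\ref{convention:saddle}, and check which of them become admissible after a small deformation in the kernel foliation (this deformation moves $\Sur_\kappa(w,h,e)$ inside $\mathcal{C}$ by Proposition~\ref{prop:neighbor}). For instance, the ``short'' saddle connection of length $\lambda=(e+\sqrt D)/2$ built from the central torus is available, and one can also look at saddle connections crossing the slits between $X_0$ and $X_1, X_2$. By comparing the lengths of these candidates with the bounds required by Definition~\ref{def:simple:SC}, and breaking parallelism by a kernel foliation perturbation (à la Remark~\ref{rk:no:twins}), I would show that admissibility fails for the whole component only when $w,h$ and $|e|$ are extremely small; the systematic case analysis should leave exactly the possibilities $(w,h,e)=(1,1,\pm1)$ with $D=9$, and $(w,h,e)\in\{(1,2,0),(2,1,0)\}$ with $D=16$.

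The main obstacle I anticipate is precisely this final step: ruling out admissibility for arbitrary $(w,h,e)$ requires a careful bookkeeping of all short saddle connections on $\Sur_\kappa(w,h,e)$, taking into account the multiple horizontal rays leaving each zero (angles $6\pi$ at the double zeros) and verifying, via Lemma~\ref{lm:twin:saddles}, that the twin/double-twin obstruction can be broken in the kernel foliation whenever $(w,h)\neq (1,1)$ or $|e|\geq 2$. Handling the two strata $(2,2)^{\rm odd}$ and $(1,1,2)$ in parallel, and keeping track of the distinction between twins and double twins in the second case, will make the argument technical, but the geometric mechanism is the same in both cases.
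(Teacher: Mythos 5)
Your outline of what the paper calls Case~A (twins whose union separates, hence a three-tori decomposition via Lemma~\ref{lm:3twins:decomp:3tori}, followed by a length/cylinder bookkeeping that pins down the four exceptional surfaces) is essentially the argument of Section~\ref{proof:case:A}, although the paper does not pass through an intermediate identification of $(X,\omega)$ with a prototype $\Sur_\kappa(w,h,e)$; it works directly with the three horizontal cylinders $C_0,C_1,C_2$ and uses non-admissibility to force $h_0\in\{h_1,2h_1\}$ and $\ell_0\in\{\ell_1,2\ell_1\}$. (As stated, your claim that $(X,\omega)$ equals some $\Sur_\kappa(w,h,e)$ ``up to $\GL^+(2,\R)$'' cannot be right for dimension reasons: the $\GL^+(2,\R)$-orbit of a countable family is $4$-real-dimensional while $\PrD(\kappa)$ is $6$-real-dimensional; one must at least also move in the kernel foliation, which is what the paper's claims implicitly do.)

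The genuine gap is your dismissal of the non-separating case. Whether $\s_1\cup\tau(\s_1)$ (resp.\ $\s_0\cup\s_1$) separates is determined by the homology class of the cycle $\s_0*(-\s_1)$, which is locally constant; no small perturbation along the kernel foliation or by $\GL^+(2,\R)$ can change it. Moreover the proposed contradiction does not exist: cutting along a non-separating twin cycle and regluing yields a connected flat \emph{torus} with marked slits (this is exactly the construction in Lemma~\ref{lm:twinSC:B-A:22}), which is perfectly compatible with $\dim_\C\Omega(X,\tau)^-=2$; no ``Prym structure of dimension too large'' appears. The paper must therefore treat Case~B separately: it first shows (Lemma~\ref{lm:D:square}) that the existence of a nonzero anti-invariant cycle with vanishing period forces $D$ to be a square with all absolute periods in $\Q+\imath\Q$, and then uses the torus-with-slits model to find a \emph{different} periodic direction in which a separating family of homologous saddle connections exists, reducing to Case~A. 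You also omit Case~C entirely: when the twin $\s_1$ is itself invariant by $\tau$ (or is a double twin in $\H(1,1,2)$), Lemma~\ref{lm:3twins:decomp:3tori} does not apply because it explicitly assumes $\s_1$ is not $\tau$-invariant, and the paper needs the separate cutting-gluing arguments of Lemmas~\ref{lm:twinsSC:CA:22} and~\ref{lm:twinsSC:CA:112} to reduce this configuration to Case~A as well. Without a correct treatment of Cases~B and~C the proof only covers one of the three possible configurations of the twin.
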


Recall that as an immediate consequence we draw an upper bound on the number of connected components of $\PrD(2,2)^{\rm odd}$
and $\PrD(1,1,2)$ (see Section~\ref{sec:proof}).

%****************************************************************
\subsection{Strategy of the proof of Theorem~\ref{thm:no:admisSC}}
Let $(X,\omega)$ be a Prym eigenform and let $\s_0$ be a saddle connection satisfying Convention~\ref{convention:saddle}. In view of
Corollary~\ref{cor:twin} we can assume that $\s_0$ has a twin or a double twin,
say $\s_1$, otherwise the theorem is proved. Depending the strata, we will distinguish three cases as follows:
\begin{itemize}
\item $\kappa=(2,2)^{\rm odd}$:
\begin{itemize}
\item[Case A] $\tau(\s_1)\neq -\s_1$ and $\s_1 \cup \tau(\s_1)$ is separating.
\item[Case B] $\tau(\s_1)\neq -\s_1$ and $\s_1 \cup \tau(\s_1)$ is non-separating.
\item[Case C] $\tau(\s_1)=-\s_1$.
\end{itemize}
\item $\kappa=(1,1,2)$:
\begin{itemize}
\item[Case A] $\s_1$ is a twin and $\s_0 \cup \s_1$ is separating.
\item[Case B] $\s_1$ is a twin and $\s_0 \cup \s_1$ is non-separating.
\item[Case C] $\s_1$ is a double twin ($\tau(\s_1)=-\s_1$).
\end{itemize}
\end{itemize}
We will first prove Theorem~\ref{thm:no:admisSC} under the assumption of {\bf Case A}. This case
is simpler since Lemma~\ref{lm:3twins:decomp:3tori} applies: $(X,\omega)$ admits a three tori decomposition.
In the other two cases, by Lemma~\ref{lm:D:square} $D$ is a square.
We then prove that {\bf Case B} and {\bf Case C} can be reduced  to {\bf Case A}: this corresponds, respectively, to
Sections~\ref{proof:case:B} and~\ref{proof:case:C}, respectively.

%****************************************************************
\subsection{Proof of Theorem~\ref{thm:no:admisSC} under the assumption of {\bf Case A}}
\label{proof:case:A}

From now on we will assume that for any $(X,\omega) \in \mathcal C \subset \PrD(\kappa)$, there exists no admissible  saddle connection.
Let $\s_2$ be the image of $\s_1$ by the Prym involution $\tau$. Thanks to Lemma~\ref{lm:3twins:decomp:3tori}
$(X,\omega)$ admits a three tori decomposition: $X_0$ is preserved and $X_1,X_2$ are exchanged by the Prym involution $\tau$.

\begin{Claim}
\label{lm:3tori:cyl:decomp}
There exists $(X,\omega) \in \mathcal C$ such that the horizontal direction is periodic on the tori $(X_0,X_1,X_2)$.
\end{Claim}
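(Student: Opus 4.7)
The plan is to produce a direction $\theta$ which is simultaneously rational with respect to the three absolute period lattices of $X_0,X_1,X_2$, and then to rotate $(X,\omega)$ by $-\theta$ (which keeps us inside the $\GL^+(2,\R)$-invariant component $\mathcal C$) so that $\theta$ becomes horizontal. The existence of such a $\theta$ will be forced by the real multiplication relations, not by any dynamical fact about the flow.

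First I would set up a basis adapted to the three-tori decomposition: choose symplectic bases $(a_0,b_0)$ of $H_1(X_0,\Z)$ and $(a_i,b_i)$ of $H_1(X_i,\Z)$, $i=1,2$, with $\tau(a_0)=-a_0$, $\tau(b_0)=-b_0$, $\tau(a_1)=-a_2$, $\tau(b_1)=-b_2$. Setting $a:=a_1+a_2$ and $b:=b_1+b_2$, the tuple $(a_0,b_0,a,b)$ is a symplectic basis of $H_1(X,\Z)^-$ in which the intersection form has matrix $\left(\begin{smallmatrix} J & 0 \\ 0 & 2J\end{smallmatrix}\right)$, exactly as in the proof of Lemma~\ref{lm:example:D}. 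Let $T\in\End(\Prym(X,\tau))$ be a generator of $\mathfrak{i}(\Ord_D)$, so that $\omega\circ T=\lambda\,\omega$ with $\lambda\in\R$. The self-adjointness of $T$ with respect to this block-diagonal intersection form, combined with an elementary $2\times 2$ computation, forces $T$ to have the block shape $T=\left(\begin{smallmatrix} a\Id & B \\ C & d\Id\end{smallmatrix}\right)$ with $a,d\in\Z$, $C=\left(\begin{smallmatrix} p & q \\ r & s \end{smallmatrix}\right)\in M_2(\Z)$, and $B=\left(\begin{smallmatrix} 2s & -2q \\ -2r & 2p\end{smallmatrix}\right)$; the discriminant is $D=(a-d)^2+8\det C$.

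Next I would exploit the eigen-relations. Applying $\omega(Tv)=\lambda\,\omega(v)$ to $v=a_0,b_0$ yields
\[
p\,\omega(a)+r\,\omega(b)=(\lambda-a)\,\omega(a_0),\qquad q\,\omega(a)+s\,\omega(b)=(\lambda-a)\,\omega(b_0),
\]
and applying it to $v=a,b$ yields a dual pair of relations involving $\lambda-d$ in place of $\lambda-a$. Since $D=(a-d)^2+8\det C>0$, at least one of $\lambda-a$ and $\lambda-d$ is non-zero, so one of the two systems can be solved. In the generic case $\det C\neq 0$ (whence $\lambda\neq d$), the first system gives
\[
m\,\omega(a)+n\,\omega(b)=\kappa\bigl((ms-nq)\,\omega(a_0)+(np-mr)\,\omega(b_0)\bigr),\quad \kappa:=\tfrac{\lambda-a}{\det C}\in\R^*,
\]
for every $(m,n)\in\Z^2$. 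Hence every non-zero element of the lattice $\Lambda_1:=\Z\omega(a_1)+\Z\omega(b_1)=\tfrac12\bigl(\Z\omega(a)+\Z\omega(b)\bigr)$ is a non-zero real multiple of some element of $\Lambda_0:=\Z\omega(a_0)+\Z\omega(b_0)$, and therefore every rational direction of $\Lambda_1$ is automatically a rational direction of $\Lambda_0$. The degenerate case $\det C=0$ (which corresponds precisely to the square discriminants $D\in\{9,16\}$ and forces $\lambda=d$ after a suitable choice of eigenvalue) will be handled symmetrically via the dual relations, giving the reverse inclusion of the rational-direction sets.

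Finally, picking any $\theta$ in the rational-direction set of $\Lambda_1$ (which is a non-empty countable dense subset of $\R/\pi\Z$), the previous step shows $\theta$ is also a rational direction of $\Lambda_0$, and the $\tau$-symmetry $\omega(a_2)=\omega(a_1)$, $\omega(b_2)=\omega(b_1)$ makes it rational for $\Lambda_2$ as well. Applying the rotation $r_{-\theta}\in\mathrm{SO}(2)\subset\GL^+(2,\R)$ to $(X,\omega)$ then produces the required surface in $\mathcal C$ whose horizontal direction is periodic on each of the three tori. The main technical step will be the rigorous derivation of the block shape of $T$ from self-adjointness, together with a uniform treatment of the degenerate case $\det C=0$.
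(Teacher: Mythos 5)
Your route is genuinely different from the paper's and, for the literal statement of the claim, it works. The paper's proof is a two-line appeal to a heavy black box: it first uses the kernel foliation and $\GL^+(2,\R)$ to make the slits $\s_i$ parallel to a horizontal simple closed geodesic of $X_0$, and then invokes complete periodicity of Prym eigenforms (from \cite{Lanneau:Manh:cp}) to conclude that the horizontal direction is periodic on the whole surface. You replace this by a self-contained linear-algebra argument: the normal form $T=\left(\begin{smallmatrix} a\Id & 2\,\mathrm{adj}(C) \\ C & d\Id\end{smallmatrix}\right)$ is correct (it is the content of \cite[Proposition 4.2]{LanNg11}, which the paper itself uses in Lemma~\ref{lm:D:square}), the identity $D=(a-d)^2+8\det C$ checks out, and the eigen-relations do show that $(\lambda-a)\,\omega(v)\in\Z\omega(a)+\Z\omega(b)$ for every $v\in H_1(X_0,\Z)$, so every direction of the lattice $\Lambda_0$ is a direction of $\Lambda_1=\Lambda_2$. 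Your case analysis is actually more complicated than necessary: $\lambda=a$ is impossible, since the first system would then force $C=0$ (as $\omega(a),\omega(b)$ span $\R^2$), hence $B=0$, hence $(a-d)\,\omega(a)=0$ and $D=0$. So the ``forward'' system always applies, no inversion of $C$ is needed, and the degenerate case $\det C=0$ requires no separate treatment. What your approach buys is independence from the complete-periodicity theorem; what the paper's buys is the stronger output it actually needs, see below.

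The one substantive gap is not in the proof of the claim as stated but in its fitness for the sequel. Immediately after the claim the paper assumes that $(X,\omega)$ is \emph{decomposed into three horizontal cylinders along the saddle connections} $\s_0,\s_1,\s_2$; this requires not only that the horizontal direction be periodic on each torus $\C/\Lambda_i$ but also that the slits themselves be horizontal, so that each slit torus is a single horizontal cylinder with the slit on its boundary. Rotating so that a common lattice direction becomes horizontal does nothing to the direction of $\omega(\s_0)$, which is a relative period. You need one additional step, exactly the one the paper performs first: move in the kernel foliation leaf (which changes only relative periods and stays in $\mathcal C$) so that $\omega(\s_0)$ is parallel to the chosen lattice vector before (or after) rotating. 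With that sentence added, your argument delivers everything the paper's does.
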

\begin{proof}[Proof of Claim~\ref{lm:3tori:cyl:decomp}]
By moving in the kernel foliation leaf and using $\GL^+(2,\R)$ action, we can assume that the slits $\s_i$ are parallel to a simple closed geodesic in  $(X_0,\omega_0)$ which is horizontal. Since $(X,\omega)$ is completely periodic in the sense of Calta (see~\cite{Lanneau:Manh:cp}), the claim follows.
\end{proof}

%\begin{proof}[Proof of Claim~\ref{lm:3tori:cyl:decomp}]
%%
%Let us assume that $\kappa=(2,2)^{\rm odd}$. By using the kernel foliation and $\mathrm{GL}^+(2,\R)$-action on $(X,\omega)$, one assumes that
%\begin{enumerate}
%\item The saddle connections $\s_0,\s_1,\s_2$ are horizontal,
%\item The horizontal direction is periodic on the torus $(X_0,\omega_0)$,
%\item There exists a saddle connection $\eta$ anti-invariant by $\tau$ and contained in $(X_0,\omega_0)$,
%\item The interior of $\eta$ is disjoint from $\s_1\cup\s_2$.
%\end{enumerate}
%We consider $(X_1,\omega_1)$ and $(X_2,\omega_2)$ as flat tori with a horizontal slit.
%By assumption $\eta$ has a twin $\eta'$ (otherwise it is admissible). This twin must be contained in
%one of the torus $(X_1,\omega_1)$ or $(X_2,\omega_2)$ (since $\eta'$ cannot cross the slit).
%Hence the union of $\eta'$ with the slit produces a horizontal closed geodesic on one of the isometric torus
%$(X_1,\omega_1)$ or $(X_2,\omega_2)$. \medskip
%
%Now if $\kappa=(1,1,2)$ we use the same strategy to make $(X_1,\omega_1)$ periodic
%and to have a saddle connection $\eta$ anti-invariant by $\tau$ and contained in $(X_1,\omega_1)$.
%We consider $(X_0,\omega_0)$ as a flat torus with a horizontal slit. Again $\eta$ has a twin or a double twin $\eta'$
%(otherwise it is admissible). This twin must be contained in the torus $(X_0,\omega_0)$ (since $\eta'$ cannot cross the slit).
%Hence the surface is completely periodic in the horizontal direction. This proves the claim.
%\end{proof}

In the sequel we assume that $(X,\omega)$ is decomposed into three horizontal cylinders, say $C_0,C_1,C_2$, along the saddle connections
$\s_0,\s_1,\s_2$, where $C_0$ is fixed, and $C_1,C_2$ are exchanged by the Prym involution. We let $s=|\sigma_i|$.
We denote by $\ell_i,h_i$ the width and the height of the cylinder $C_i$. Obviously $\ell_1=\ell_2$ and $h_1=h_2$ (see Figure~\ref{fig:3twins:3tori} for the notations).

\subsubsection{Case $\kappa=(2,2)^{\rm odd}$}

%\begin{Claim}
%\label{clm:3twins:3cyl:l:22}
%The inequality $\ell_0\geq \ell_1$ holds. %\commenterwan{If $\kappa=(1,1,2)$ then $\ell_0 \leq 2\ell_1$}
%\end{Claim}
%\begin{proof}[Proof of Claim~\ref{clm:3twins:3cyl:l:22}]
%There exists a horizontal saddle connection $\s'_0$ in $X_0$ such that $\s_1\cup \s'_0$ and $\s_2\cup\s'_0$ are the two boundaries components
%of the cylinder $C_0$. Similarly, there exists a pair of horizontal saddle connections $\s'_1, \s'_2$ where $\s'_i$ is contained in $X_i$ such that
%$\s'_i\cup\s_0$ is a boundary component of $C_i$. By construction we have $\tau(\s'_0)=-\s'_0$, $\tau(\s'_1)=-\s'_2$, and
%$$
%\ell_0=|\s_1|+|\s'_0| \text{  and } \ell_1=|\s_0| +|\s'_1|=|\s_0|+|\s'_2|.
%$$
%If $\ell_0<\ell_1$ then $|\s'_0|<|\s'_1|=|\s'_2|$. Hence $\s'_0$ is admissible, contradicting our assumption.
%\end{proof}

%\commenterwan{Let $(\s'_1,\s'_0)$ contained in the boundary of $C_1,C_0$.
%By construction $\s'_1$ has no twin and:
%$$
%\ell_0=2|\s_0|+|\s'_0| \text{  and } \ell_1=|\s_0| +|\s'_1|.
%$$
%If $\ell_0>2\ell_1$ then $2|\s'_1|=2\ell_1-2|\s_0|<|\s'_0|$ thus the saddle connection $\s'_1$ is admissible
%($\s_0'$ is not a double twin): this is a contradiction.
%}

\begin{Claim}
\label{clm:3twins:3cyl:heights:22}
One of the following two equalities holds: $h_0=h_1$ or $h_0=2h_1$.
\end{Claim}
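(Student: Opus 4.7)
The approach is to study vertical saddle connections in the three-cylinder decomposition $C_0 \cup C_1 \cup C_2$ and use the non-admissibility assumption to force rigidity of the cylinder heights.

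After combined $\GL^+(2,\R)$ and kernel foliation moves (both of which preserve $\mathcal{C}$ and the non-admissibility property), I would arrange a representative $(X,\omega) \in \mathcal{C}$ admitting three mutually parallel vertical saddle connections joining $P$ to $Q$: a $\tau$-invariant $\tilde{\sigma}_0 \subset \overline{C_0}$ of length $h_0$, together with $\tilde{\sigma}_1 \subset \overline{C_1}$ of length $h_1$ and $\tilde{\sigma}_2 := -\tau(\tilde{\sigma}_1) \subset \overline{C_2}$ of length $h_1$. Since both zeros appear on every cylinder boundary, such a configuration is attainable by adjusting the cylinder twists through the kernel foliation.

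Applying the non-admissibility hypothesis to $\tilde{\sigma}_0$, there exists $\sigma' \neq \tilde{\sigma}_0$ from $P$ to $Q$ with $\omega(\sigma') = \lambda\, \omega(\tilde{\sigma}_0)$ for some $\lambda \in (0,1]$. Any such vertical saddle connection is a concatenation of vertical chords across the $C_i$; consequently its length is a non-negative integer combination $a h_0 + b h_1$, and among those not exceeding $h_0$ only $h_0$, $h_1$, and $2h_1$ occur (the last requiring a saddle connection spanning $C_1 \cup C_2$). A further small kernel foliation perturbation destroys every ``spurious'' twin of $\tilde{\sigma}_0$ supported in $\overline{C_0}$ by generically misaligning the zeros on its horizontal boundary, while preserving $\tilde{\sigma}_0, \tilde{\sigma}_1, \tilde{\sigma}_2$. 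An analogous analysis applied to nearby $\tau$-invariant directions rules out the strict inequality $\lambda < 1$: a witness of length $h_1 < h_0$ could be tilted, via another kernel foliation deformation, to produce an admissible saddle connection on some surface of $\mathcal{C}$. Hence $\lambda = 1$, and $\sigma'$ is a genuine twin of $\tilde{\sigma}_0$ of length $h_0$ lying in $C_1$, $C_2$, or across $C_1 \cup C_2$, forcing $h_0 = h_1$ or $h_0 = 2h_1$.

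The main obstacle is the last step: ensuring that if $h_0 \notin \{h_1, 2h_1\}$ then one of the two successive kernel foliation perturbations genuinely produces an admissible saddle connection somewhere in $\mathcal{C}$, without simultaneously creating unwanted twins. This demands a careful geometric analysis of how the $\tau$-fixed points and the zeros $P$, $Q$ are distributed on the boundaries of the cylinders coming from the three-tori decomposition, and a control of the kernel foliation deformation showing that the only height ratios compatible with persistent non-admissibility are $1$ and $2$.
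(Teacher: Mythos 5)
Your overall strategy (exploit the non-admissibility hypothesis via well-chosen vertical saddle connections in the three-cylinder decomposition) is the same as the paper's, but two steps have genuine gaps. The first is the assertion that a vertical saddle connection from $P$ to $Q$ of length at most $h_0$ can only have length $h_0$, $h_1$ or $2h_1$. Each boundary component of $C_i$ has the form $\sigma_j\cup\sigma_i'$, where the horizontal saddle connection $\sigma_i'$ occurs on \emph{both} boundary components of the \emph{same} cylinder $C_i$; consequently a vertical trajectory avoiding $C_0$ may cross $C_1$ (or $C_2$) arbitrarily many times, re-entering through $\sigma_1'$ (or $\sigma_2'$) at each pass. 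A twin of your $\tilde\sigma_0$ may therefore a priori have length $mh_1$ for any $m\geq 1$, and your argument only yields $h_0=mh_1$ --- which is exactly the first half of the paper's proof, not the claim.

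The second gap is the one you flag yourself: ruling out $m>2$, i.e.\ producing an admissible saddle connection when $h_0>2h_1$, is the actual content of the claim, and ``tilting a witness via a kernel foliation deformation'' is not an argument. The paper's resolution is concrete and does not require your (also unjustified, and in any case unnecessary) simultaneous alignment of three parallel vertical saddle connections: assuming $h_0>2h_1$, let $\eta_1$ be the segment in $C_1$ from $P$ to the midpoint of $\sigma_0$ and set $\eta=\eta_1\cup\tau(\eta_1)$; this is a $\tau$-invariant saddle connection from $P$ to $Q$ of length $2h_1$, which one renders vertical using the horocycle subgroup. Since the cone angle at $P$ is $6\pi$ there are exactly three upward vertical rays at $P$: one runs along $\eta$, and the other two must cross $C_0$, so any other vertical saddle connection from $P$ to $Q$ has length at least $h_0>2h_1$. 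Hence $\eta$ is admissible, contradicting the standing hypothesis. You would need an argument of this kind (or some other mechanism bounding the number of crossings of $C_1\cup C_2$) to close the proof. Note finally that the point you worry about most, excluding $\lambda<1$, is in fact the easy part: the perturbation of Lemma~\ref{lm:twin:saddles} already ensures that every saddle connection from $P$ to $Q$ in the direction of $\tilde\sigma_0$ is a genuine twin.
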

\begin{proof}[Proof of Claim~\ref{clm:3twins:3cyl:heights:22}]
Let $\delta$ be a saddle connection in $C_0$ joining $P$ to $Q$ which crosses the core curve of $C_0$ only once. Note that $\delta$ is anti-invariant by $\tau$.  Using $U=\{\left(\begin{smallmatrix} 1 & t \\ 0 & 1 \end{smallmatrix}\right), \; t \in \R\}$, we can assume that $\delta$ is vertical. By assumption, $\delta$ is not admissible. Changing the length of the slits (the length of $\s_i$) if necessary and  using the argument in Lemma~\ref{lm:twin:saddles}, we can assume that $\delta$ has a twin $\delta'$. Remark that $\delta'$  must intersect $C_1\cup C_2$, therefore we have $|\delta'| =mh_1+nh_0$ with
$m\in \Z$, $m\geq 1$.
The condition $|\delta'|=|\delta|$ implies $n=0$. Thus $h_0=mh_1$.

Assume that $h_0 > 2h_1$. Let $\eta_1$ be a geodesic segment in $C_1$ joining $P$ to the midpoint of $\s_0$. Set $\eta_2=\tau(\eta_1)$ and $\eta=\eta_1\cup \eta_2$. Observe that $\eta$ is a saddle connection invariant by $\tau$. Again, by using the subgroup $U$ we assume that $\eta$ is vertical.
Hence $|\eta|=2h_1<h_0$. Clearly any other vertical (upward)  geodesic ray starting from $P$ must intersect $C_0$. Thus, if there exists another vertical saddle connection $\eta'$ joining $P$ to $Q$, we must have $|\eta'|\geq h_0 >2h_1=|\eta|$. Hence $\eta$ is admissible and the claim is proved.
\end{proof}

\begin{Claim}
\label{Claim:4:22}
If $h_0=h_1$ then, either:
\begin{enumerate}
\item $\ell_0=\ell_1$ and $(X,\omega)$ is contained in the same component as $\Sur_{\kappa}(1,1,-1) \in \Omega E_{9}(\kappa)$, or
\item $\ell_0=2\ell_1$ and $(X,\omega)$ is contained in the same component as $\Sur_{\kappa}(1,2,0) \in \Omega E_{16}(\kappa)$.
\end{enumerate}
\end{Claim}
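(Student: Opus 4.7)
The strategy parallels Claim~\ref{clm:3twins:3cyl:heights:22}: having fixed the heights by $h_0=h_1=h_2$, I now look for saddle connections whose length involves the widths $\ell_0,\ell_1$, and use the non-admissibility hypothesis together with Lemma~\ref{lm:twin:saddles} to produce twins. Equating periods on twin saddle connections will force an integer relation on the widths.

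First I would normalize the twist parameters of the three cylinders. Using $\GL^+(2,\R)$ and kernel-foliation moves within the component $\mathcal{C}$, together with small shears from the horocyclic subgroup $U$, one arranges that the three cylinders $C_0,C_1,C_2$ of common height $h_0$ are assembled so that a chosen transverse geodesic from $P$ to $Q$ becomes vertical. Because $h_0=h_1$, one obtains in this way a short vertical saddle connection $\delta$ from $P$ to $Q$ going straight through $C_0$, of length $h_0$, and another candidate $\delta_1$ of the same length $h_0=h_1$ going through $C_1$ only (and its image $\tau(\delta_1)$ through $C_2$).

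The core step is the following twin analysis: by assumption $\delta$ is not admissible, and after perturbing slightly to break any further parallelism (as in the proof of Lemma~\ref{lm:twin:saddles}) some vertical saddle connection joining $P$ to $Q$ must have a twin. Since $\delta$ is $\tau$-anti-invariant, its twin $\delta'$ lies in the $\tau$-exchanged part of the surface and must therefore cross either $C_1$ alone or the union $C_1\cup C_2$. Writing out $\omega(\delta)=\omega(\delta')$ in terms of $(\ell_0,\ell_1,h_0)$ and the twists—themselves constrained by the normalization step—yields two possibilities:
\begin{itemize}
\item the twin crosses a single side cylinder, forcing $\ell_0=\ell_1$;
\item the twin crosses both side cylinders, forcing $\ell_0=2\ell_1$.
\end{itemize}
Any other combinatorial type of twin either contradicts the length condition or produces an admissible saddle connection of smaller length, against hypothesis.

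Finally, in each of the two cases, I identify the normalized $(X,\omega)$ with one of the model surfaces from Lemma~\ref{lm:example:D}. When $\ell_0=\ell_1$ the absolute and relative periods coincide, up to a $\GL^+(2,\R)$ factor, with those of $\Sur_{(2,2)}(1,1,-1)\in\Omega E_9(2,2)^{\rm odd}$; when $\ell_0=2\ell_1$ they coincide with those of $\Sur_{(2,2)}(1,2,0)\in\Omega E_{16}(2,2)^{\rm odd}$. Since the path used to reach the normalized form is a concatenation of $\GL^+(2,\R)$ and kernel-foliation moves, it lies inside a single component of $\PrD(\kappa)$, which gives the claim.

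The main obstacle is the bookkeeping of the cylinder twists: a priori the two side cylinders may be sheared independently of the middle one, so the saddle connection $\delta'$ may not be exactly vertical and its period may pick up a horizontal contribution. Controlling this requires combining the $U$-shear with a suitable kernel-foliation displacement to align the zeros $P$ and $Q$ in a way that kills the twist contribution; once this normalization is carried out, the twin-length equation becomes a purely linear relation in $\ell_0$ and $\ell_1$, from which the dichotomy $\ell_0/\ell_1\in\{1,2\}$ follows directly.
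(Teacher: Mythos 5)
Your first step matches the paper's: use the horocycle flow to make a saddle connection $\delta_0\subset C_0$ from $P$ to $Q$ vertical with period $(0,h)$, then invoke non-admissibility to find a second vertical saddle connection of the same length, which is forced to lie in $C_1$ (and its $\tau$-image in $C_2$). This correctly pins down the relative twists of the three cylinders. But your core step --- extracting the dichotomy $\ell_0/\ell_1\in\{1,2\}$ from the twin equation $\omega(\delta)=\omega(\delta')$ for a \emph{vertical} saddle connection --- cannot work. A vertical saddle connection has period $(0,h)$, with no horizontal component, so equating it with the period of its twin yields no relation whatsoever on the widths $\ell_0,\ell_1$. Moreover, your proposed dichotomy (``twin crosses one side cylinder'' versus ``twin crosses both'') is not available: a twin of $\delta$ must have vertical period exactly $h=h_1=h_2$, so it can cross only one cylinder; a path through $C_1\cup C_2$ would have vertical period $2h$ and could not be a twin. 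The twists are continuous parameters and killing them by normalization, as you suggest at the end, only makes the equation $0=0$ in the horizontal coordinate.

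The missing ingredient is a \emph{second, non-vertical} saddle connection whose period genuinely sees the widths. The paper takes $\gamma\subset\overline{C}_1\cup\overline{C}_2$ joining $P$ to $Q$ through the midpoint of $\s_0$, with $\omega(\gamma)=(s-2\ell_1,2h)$ where $s=|\s_i|$. Non-admissibility produces a parallel saddle connection $\gamma'$ with $|\gamma'|\le|\gamma|$, and a short argument (using that a $\tau$-invariant competitor starting in $C_2$ would have to cross $C_0$ and hence have vertical period $>2h$) shows that $\gamma'$ or $\tau(\gamma')$ starts in $C_0$. The dichotomy is then about $\gamma'$: if it leaves $\overline{C}_0$ and ends in $C_1$ its period is $(s-\ell_0-\ell_1,2h)$, and parallelism with $\gamma$ gives $\ell_0=\ell_1$; if it stays in $\overline{C}_0$ it crosses the core curve twice, its period is $(s-\ell_0,2h)$, and one gets $\ell_0=2\ell_1$. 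Without this second saddle connection (or some other period carrying a horizontal component proportional to the widths), your argument does not close.
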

\begin{proof}[Proof of Claim~\ref{Claim:4:22}]
Let $h:=h_0=h_1$. Up to the action of the horocycle flow we assume that $\delta_0$ is a saddle connection in $C_0$ joining $P$ to $Q$
such that $\omega(\delta_0)=(0,h)$. Since $\delta_0$ is not admissible, there exists a vertical saddle connection
$\delta_1$ joining $P$ and $Q$ such that $\omega(\delta_1)= (0,\lambda)$, where $0< \lambda \leq h$.
Actually $\omega(\delta_1)=(0,h)$. Since $\delta_1$ cannot be contained in $C_0$, $\delta_1$ is contained in $C_1$. Thus
$\delta_2=\tau(\delta_1)$ is contained in $C_2$.
Let $\gamma$ be the saddle connection contained in $\overline{C}_1\cup \overline{C}_2$ joining $P$ to $Q$ and passing through the midpoint of
$\s_0$ as shown in Figure~\ref{fig:3cyl:no:admisSC:C1}. By assumption there exists another saddle connection
$\gamma'$ joining $P$ to $Q$ parallel to $\gamma$ such that $|\gamma'| \leq |\gamma|$. We claim that either $\gamma'$ or $\tau(\gamma')$ starts in $C_0$.
This is clear if $\gamma'$ is not invariant by $\tau$. If $\gamma'$ is invariant by $\tau$ and starts from $C_2$ then it must end in $C_1$
(since $\tau(C_2)=C_1$). In particular $\gamma'$ must cross $C_0$ at least once. Hence the vertical coordinate $\omega(\gamma')$ is greater than $2h$.
Therefore $|\gamma'|> |\gamma|$ that is a contradiction.
 \begin{figure}[htb]
 \begin{minipage}[t]{0.4\linewidth}
 \centering
 \begin{tikzpicture}[scale=0.30]
 \draw (2,0) -- (3,0) (6,0) -- (6,4) -- (5,4) -- (5,8) -- (4,8) -- (4,12) -- (3,12) (0,12) -- (0,8) -- (1,8) --  (1,4) -- (2,4) -- (2,0);

 \foreach \x in {(0,12), (1,8), (2,4), (3,0)} \draw[red] \x -- +(3,0);
 \draw[blue, dashed] (0,12) -- (5,4) (1,8) -- (6,0);

 \foreach \x in {(0,12), (4,12), (1,8), (5,8), (2,4), (6,4), (3,0)} \filldraw[fill=white] \x circle (3pt);
 \foreach \x in {(3,12), (0,8), (4,8), (1,4), (5,4), (2,0), (6,0)} \filldraw[fill=black] \x circle (3pt);

 \draw (0,12) node[above] {$\scriptstyle Q$} (3,12) node[above] {$\scriptstyle P$};

 \draw[red] (1.5,12) node[above] {$\scriptstyle \s_2$} (3,8) node[above] {$\scriptstyle \s_0$} (3,4) node[below] {$\scriptstyle \s_1$} (4.5,0) node[below] {$\scriptstyle \s_2$};
 \draw[blue] (1,9.5) node {$\scriptstyle \gamma$} (5.2,2.2) node {$\scriptstyle \gamma'$};

 \draw (0,10) node[left] {$\scriptstyle C_2$} (1,6) node[left] {$\scriptstyle C_1$} (2,2) node[left] {$\scriptstyle C_0$};

 \draw (3,-2) node {$h_0=h_1, \ell_0=\ell_1$};
 \end{tikzpicture}
 \end{minipage}
 \begin{minipage}[t]{0.4\linewidth}
 \centering
 \begin{tikzpicture}[scale=0.30]
 \draw (2,0) -- (7,0) (10,0) -- (10,4) -- (5,4) -- (5,8) -- (4,8) -- (4,12) -- (3,12) (0,12) -- (0,8) -- (1,8) --  (1,4) -- (2,4) -- (2,0);

 \foreach \x in {(0,12), (1,8), (2,4), (7,0)} \draw[red] \x -- +(3,0);
 \draw[blue, dashed] (0,12) -- (5,4) (2,4) -- (4.5,0)  (7.5,4) -- (10,0);

  \foreach \x in {(0,12), (4,12), (1,8), (5,8), (2,4), (10,4), (7,0)} \filldraw[fill=white] \x circle (3pt);
 \foreach \x in {(3,12), (0,8), (4,8), (1,4), (5,4), (2,0), (10,0)} \filldraw[fill=black] \x circle (3pt);

 \draw (0,12) node[above] {$\scriptstyle Q$} (3,12) node[above] {$\scriptstyle P$};

 \draw[red] (1.5,12) node[above] {$\scriptstyle \s_2$} (2,8) node[below] {$\scriptstyle \s_0$} (3.5,4) node[below] {$\scriptstyle \s_1$} (8.5,0) node[below] {$\scriptstyle \s_2$};

  \draw[blue] (1,9.5) node {$\scriptstyle \gamma$} (4.2,1.5) node {$\scriptstyle \gamma'$} (9.5,2) node {$\scriptstyle \gamma'$};

 \draw (0,10) node[left] {$\scriptstyle C_2$} (1,6) node[left] {$\scriptstyle C_1$} (2,2) node[left] {$\scriptstyle C_0$};

  \draw (5,-2) node {$h_0=h_1, \ell_0=2\ell_1$};
 \end{tikzpicture}
 \end{minipage}
 \caption{Claim~\ref{Claim:4:22}: $h_1=h_0$: the surfaces $\Sur_{\kappa}(1,1,-1)$ and
 $\textrm{diag}(1,\frac1{2})\cdot \Sur_{\kappa}(1,2,0)$.}
 \label{fig:3cyl:no:admisSC:C1}
 \end{figure}
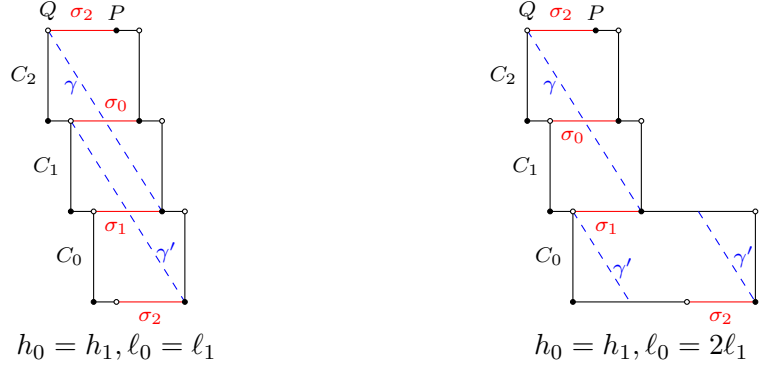

We can now suppose that $\gamma'$ starts in $C_0$. Observe that $\omega(\gamma)=(s-2\ell_1,2h)$. \medskip

If $\gamma'$  is not contained in $\overline{C}_0=X_0$ (Figure~\ref{fig:3cyl:no:admisSC:C1}, left),
$\gamma'$ must end up in $C_1$. Since $|\gamma'| \leq |\gamma|$ elementary calculation shows that $\omega(\gamma')=(s-\ell_0-\ell_1,2h)$.
Now $\gamma$ and $\gamma'$ are parallel, thus $\ell_0=\ell_1$ and $(X,\omega) =\Sur_{\kappa}(1,1,-1)$. \medskip

If $\gamma'$ is contained in $\overline{C}_0$ (Figure~\ref{fig:3cyl:no:admisSC:C1}, right), $\gamma'$ must intersect twice the core curve of $C_0$.
Thus $\omega(\gamma')=(s-\ell_0,2h)$, from which we deduce $\ell_0=2\ell_1$ and $(X,\omega)= \textrm{diag}(1,\frac1{2})\cdot\Sur_{\kappa}(1,2,0)$.
The claim is proved.
\end{proof}
%*********

\begin{Claim}
\label{Claim:5:22}
If $h_0=2h_1$ then, either:
\begin{enumerate}
\item $\ell_0=\ell_1$ and  $(X,\omega)$ is contained in the same component as $\Sur_{\kappa}(2,1,0) \in \Omega E_{16}(\kappa)$, or
\item $\ell_0=2\ell_1$ and $(X,\omega)$ is contained in the same component as $\Sur_{\kappa}(1,1,1)\in \Omega E_{9}(\kappa)$.
\end{enumerate}
\end{Claim}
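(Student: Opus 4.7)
The plan is to follow the proof of Claim~\ref{Claim:4:22} line by line, changing only the height ratio from $h_0 = h_1$ to $h_0 = 2h_1$. I will reuse the auxiliary saddle connection $\gamma \subset \overline{C}_1 \cup \overline{C}_2$ joining $P$ to $Q$ through the midpoint of $\sigma_0$; its period is now
\[
\omega(\gamma) = (s - 2\ell_1,\,2h_1) = (s - 2\ell_1,\,h_0).
\]
As before, the non-admissibility assumption produces a parallel saddle connection $\gamma' \neq \gamma$ with $\omega(\gamma') = \mu\,\omega(\gamma)$, $0 < \mu \le 1$, and (after possibly replacing $\gamma'$ by $\tau(\gamma')$) starting in $C_0$.

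Next I would enumerate the configurations of $\gamma'$ compatible with parallelism and $\mu \le 1$. The vertical component $\mu h_0 \le h_0 = 2h_1$ heavily restricts the combinatorial shape of $\gamma'$: the principal case, mirroring Claim~\ref{Claim:4:22}(2), is $\mu = 1$ with $\gamma' \subset \overline{C}_0$ crossing $C_0$ exactly once. I will write its horizontal component as $s - (k+1)\ell_0$ for a non-negative integer wrap number $k$; equating with $s - 2\ell_1$ forces $(k+1)\ell_0 = 2\ell_1$. A combinatorial analysis of how the slits $\sigma_0, \sigma_1, \sigma_2$ sit along $\partial C_0$, together with the requirement that the interior of $\gamma'$ avoid $P$ and $Q$, will show that $k \in \{0, 1\}$, producing the dichotomy $\ell_0 \in \{2\ell_1,\,\ell_1\}$. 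The complementary case in which $\gamma'$ traverses $C_1$ or $C_2$ with $\mu = 1/2$ is treated in the same spirit and yields no new relation; any other trajectory, such as one spanning $C_0$ together with $C_1$ or $C_2$ consecutively, would accrue vertical $\ge h_0 + h_1 = 3h_1 > h_0$, contradicting $\mu \le 1$.

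To conclude, I will verify that $(X, \omega)$ matches a model surface from Lemma~\ref{lm:example:D}. When $\ell_0 = \ell_1$, the parameters $(w, h, e) = (2, 1, 0)$ yield $\lambda = 2$ and $D = e^2 + 8wh = 16$; the cylinder data $(\ell_0, h_0, \ell_1, h_1) = (\lambda, \lambda, w, h) = (2, 2, 2, 1)$ coincides with that of $\Sur_{(2,2)}(2, 1, 0) \in \Omega E_{16}(\kappa)$. When $\ell_0 = 2\ell_1$, the parameters $(w, h, e) = (1, 1, 1)$ yield $\lambda = 2$ and $D = 9$, with matching dimensions $(2, 2, 1, 1)$ for $\Sur_{(2,2)}(1, 1, 1) \in \Omega E_9(\kappa)$. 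Since a three-tori decomposition is determined by its cylinder data up to the $\GL^+(2, \R)$-action, $(X, \omega)$ will lie in the same $\GL^+(2, \R)$-orbit, hence in the same connected component, as the corresponding model.

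The hard part will be establishing the wrap bound $k \in \{0, 1\}$: larger values of $k$ satisfy the period equation formally but need to be ruled out by tracking the precise positions of the slits along $\partial C_0$, arguing that any $\gamma'$ with $k \ge 2$ would necessarily pass through a zero of $\omega$ in its interior and hence fail to be a saddle connection. Once this combinatorial enumeration is pinned down, the remainder is a routine absolute period comparison, exactly paralleling the finale of Claim~\ref{Claim:4:22}.
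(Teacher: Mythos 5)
There is a genuine gap at exactly the point you flag as ``the hard part'': your proposed mechanism for bounding the wrap number $k$ does not work. A geodesic segment contained in $\overline{C}_0$ joining $P$ on one boundary component to $Q$ on the other and crossing the core curve once has its interior entirely inside the open cylinder, which contains no zero of $\omega$; it therefore remains a legitimate saddle connection no matter how many times it wraps around $C_0$, and large $k$ cannot be excluded by arguing that $\gamma'$ would pass through a singularity. The paper bounds $k$ by a different device: it first proves $\ell_0\geq \ell_1$ by applying the standing no-admissible-saddle-connection hypothesis to the \emph{horizontal} saddle connection $\s'_0\subset X_0$ which, together with $\s_1$ (resp.\ $\s_2$), forms a boundary component of $C_0$. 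Since $\ell_0=|\s_1|+|\s'_0|$ and $\ell_1=|\s_0|+|\s'_1|=|\s_0|+|\s'_2|$ with $|\s_0|=|\s_1|$, the inequality $\ell_0<\ell_1$ would give $|\s'_0|<|\s'_1|=|\s'_2|$ and make $\s'_0$ admissible, a contradiction. Combined with the period identity $k\ell_0=2\ell_1$ this forces $k\in\{1,2\}$, i.e.\ $\ell_0\in\{2\ell_1,\ell_1\}$. Some such global use of the hypothesis is unavoidable: the combinatorics of the slits along $\partial C_0$ alone will never rule out $k\geq 2$ (equivalently $\ell_0\leq \tfrac{2}{3}\ell_1$ in your parametrization).

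A second omission: before you can write the horizontal holonomy of $\gamma'$ as $s-(k+1)\ell_0$ you must control the twist of $C_0$; a priori it is only of the form $d+k\ell_0$ with an unknown offset $d$. The paper normalizes this by a preliminary step dual to the opening of Claim~\ref{Claim:4:22}: take the $\tau$-invariant saddle connection $\delta=\delta_1\cup\tau(\delta_1)$ through the midpoint of $\s_0$ with $\omega(\delta)=(0,2h_1)$, make it vertical, and deduce from non-admissibility that there is a vertical saddle connection from $P$ to $Q$ contained in $C_0$ (any other vertical ray from $P$ must cross the core of $C_0$, and $h_0=2h_1$ forces the competitor into $\overline{C}_0$). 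This aligns $P$ and $Q$ on $\partial C_0$ and is what makes the subsequent comparison $\omega(\gamma')=(k\ell_0,2h_1)$ against $\omega(\gamma)=(2\ell_1,2h_1)$ legitimate (note the paper's $\gamma$ has holonomy $(2\ell_1,2h_1)$, not $(s-2\ell_1,2h_1)$ as in Claim~\ref{Claim:4:22}). Finally, your closing assertion that a three-tori decomposition is determined up to $\GL^+(2,\R)$ by its cylinder data is not literally true (the slit length and twists matter); the identification with $\Sur_\kappa(2,1,0)$ and $\Sur_\kappa(1,1,1)$ also uses motion along the kernel foliation leaf.
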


\begin{proof}[Proof of Claim~\ref{Claim:5:22}]
Let $h:=h_1=h_0/2$. Let $\delta_1$ be a geodesic segment, contained in $C_1$, joining the midpoint of $\s_0$ to $P$.
Using the horocycle flow we assume $\delta_1$ to be vertical. Set $\delta_2=\tau(\delta_1)$ and $\delta=\delta_1\cup \delta_2$.
By construction $\delta$ is a saddle connection which is invariant under $\tau$. By assumption,
there exists another vertical saddle connection $\delta'$ joining $P$ to $Q$ such that $|\delta'|\leq |\delta|$. It is easy to see that any other vertical
geodesic ray emanating from $P$ intersects the core curve of $C_0$. Since $\omega(\delta)=(0,2h)$ and $h_0=2h$, $\delta'$ is
contained in $C_0$.

Now let $\gamma$ be the saddle connection in $\overline{C}_1\cup \overline{C}_2$ passing through the midpoint of $\s_0$, joining $P$ to $Q$ such that $\omega(\gamma)=(2\ell_1,2h)$ (see Figure~\ref{fig:3cyl:no:admisSC:C2} below).

\begin{figure}[htb]
 \begin{minipage}[t]{0.4\linewidth}
 \centering
 \begin{tikzpicture}[scale=0.5]
 \draw (1,0) -- (2,0) (4,0) -- (4,3) -- (3,3) -- (4,4.5) -- (5,4.5) -- (6,6) -- (5,6) (3,6) -- (2,4.5) -- (1,4.5) -- (0,3) -- (1,3) -- (1,0);
 \draw[dashed] (3,6) -- (3,3);

 \foreach \x in {(3,6), (2,4.5), (1,3), (2,0)} \draw[red] \x -- +(2,0);
 \draw[blue, dashed] (0,3) -- (6,6) (4,3) -- (1,1.5) (4,1.5) -- (1,0);

 \foreach \x in {(3,6), (6,6), (2,4.5), (5,4.5), (1,3), (4,3), (2,0)} \filldraw[fill=white] \x circle (2pt);
 \foreach \x in {(5,6), (1,4.5), (4,4.5), (0,3), (3,3), (1,0), (4,0)} \filldraw[fill=black] \x circle (2pt);

 \draw (3,6) node[above] {$\scriptstyle Q$} (5,6) node[above] {$\scriptstyle P$};

% \draw[red] (4,6) node[above] {$\scriptstyle \s_2$}  (2,3) node[below] {$\scriptstyle \s_1$} (3,0) node[below] {$\scriptstyle \s_2$};

  \draw[blue] (1.5,4) node {$\scriptstyle \gamma$} (2.5,1.8) node {$\scriptstyle \gamma'$} (3.5,0.7) node {$\scriptstyle \gamma'$};

 \draw (2.5,5.5) node[left] {$\scriptstyle C_2$} (0.5,4) node[left] {$\scriptstyle C_1$} (1,1) node[left] {$\scriptstyle C_0$};

 \draw (3,-1) node {$h_0=2h_1,\ell_0=\ell_1$};
 \end{tikzpicture}
 \end{minipage}
 \begin{minipage}[t]{0.4\linewidth}
\begin{tikzpicture}[scale=0.5]
 \draw (1,0) -- (5,0) (7,0) -- (7,3) -- (3,3) -- (4,4.5) -- (5,4.5) -- (6,6) -- (5,6) (3,6) -- (2,4.5) -- (1,4.5) -- (0,3) -- (1,3) -- (1,0);
 \draw[dashed] (3,6) -- (3,3);

 \foreach \x in {(3,6), (2,4.5), (1,3), (5,0)} \draw[red] \x -- +(2,0);
 \draw[blue, dashed] (0,3) -- (6,6)  (7,3) -- (1,0);

 \foreach \x in {(3,6), (6,6), (2,4.5), (5,4.5), (1,3), (7,3), (5,0)} \filldraw[fill=white] \x circle (2pt);
 \foreach \x in {(5,6), (1,4.5), (4,4.5), (0,3), (3,3), (1,0), (7,0)} \filldraw[fill=black] \x circle (2pt);

 \draw (3,6) node[above] {$\scriptstyle Q$} (5,6) node[above] {$\scriptstyle P$};

%\draw[red] (4,6) node[above] {$\scriptstyle \s_1$} (2.5,4.5) node[above] {$\scriptstyle \s_0$} (2,3) node[below] {$\scriptstyle \s_2$} (6,0) node[below] {$\scriptstyle \s_1$};

  \draw[blue] (1.5,4) node {$\scriptstyle \gamma$} (4.5,1.4) node {$\scriptstyle \gamma'$};

 \draw (2.5,5.5) node[left] {$\scriptstyle C_2$} (0.5,4) node[left] {$\scriptstyle C_1$} (1,1.5) node[left] {$\scriptstyle C_0$};

 \draw (3.5,-1) node {$h_0=2h_1, \ell_0=2\ell_1$};
 \end{tikzpicture}

 \end{minipage}
 \caption{Claim~\ref{Claim:5:22}: $h_0=2h_1$: the surface on the left belongs to component of $\Sur_{\kappa}(2,1,0)$, and on the right belongs to the component of $\Sur_{\kappa}(1,1,1)$.}
  \label{fig:3cyl:no:admisSC:C2}
 \end{figure}
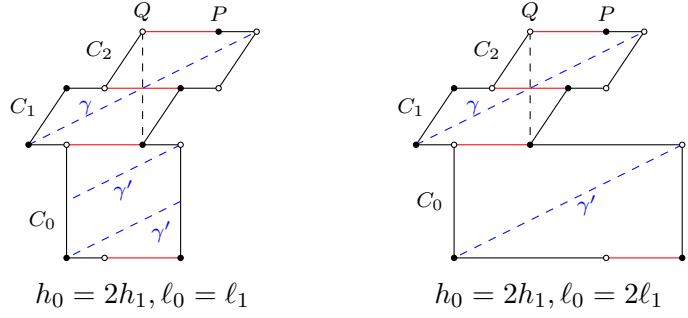
By assumption, there exists a saddle connection $\gamma'$ in the same direction as $\gamma$ such that $|\gamma'|\leq |\gamma|$.
As in Claim~\ref{Claim:4:22} either $\gamma'$ or $\tau(\gamma')$ starts in $C_0$. The proof follows the same lines.
Up to permutation, $\gamma'$ starts in $C_0$. Since $|\gamma'|\leq |\gamma|$ and $\gamma$ is parallel to $\gamma'$,
$\gamma'$ is actually contained in $C_0$. In particular $\omega(\gamma')=(k\ell_0,2h)$ for some $k\in \Z$, $k\geq 1$
and $\omega(\gamma')=\omega(\gamma)$. We draw $2\ell_1=k\ell_0$. Now we claim that the inequality
$$
\ell_0\geq \ell_1
$$
holds. Indeed there exists a horizontal saddle connection $\s'_0$ in $X_0$ such that $\s_1\cup \s'_0$ and $\s_2\cup\s'_0$ are the two
boundaries components of the cylinder $C_0$. Similarly, there exists a pair of horizontal saddle connections $\s'_1, \s'_2$ where $\s'_i$ is contained in $X_i$ such that
$\s'_i\cup\s_0$ is a boundary component of $C_i$. By construction we have $\tau(\s'_0)=-\s'_0$, $\tau(\s'_1)=-\s'_2$, and
$$
\ell_0=|\s_1|+|\s'_0| \qquad \text{  and } \qquad \ell_1=|\s_0| +|\s'_1|=|\s_0|+|\s'_2|.
$$
If $\ell_0<\ell_1$ then $|\s'_0|<|\s'_1|=|\s'_2|$. Hence $\s'_0$ is admissible, contradicting our assumption. \medskip

In conclusion $2\ell_1=k\ell_0 \geq \ell_1$ implies $\ell_0=\ell_1$, or $\ell_0=2\ell_1$. The corresponding surfaces are represented in Figure~\ref{fig:3cyl:no:admisSC:C2}. It is not hard to check that those two surfaces
belong to the same connected component that $\Sur_{\kappa}(2,1,0)$ and $\Sur_{\kappa}(1,1,1)$, respectively.  The claim is proved.
\end{proof}
%\commenterwan{The same proof holds for $\kappa=(1,1,2)$.}
%*********************************************************************************
%*********************************************************************************
%*********************************************************************************
\subsubsection{Case $\kappa=(1,1,2)$}

%By Claim~\ref{lm:3tori:cyl:decomp} we can also assume that $(X,\omega)$ is decomposed into three horizontal cylinders $C_0,C_1,C_2$, where $C_0$ is fixed and $C_1,C_2$ are exchanged by $\tau$, $C_1$ is glued to $C_0$ along the slit $\s_0\cup\s_1$, and $C_2$ is glued to $C_0$ along $\tau(\s_0)\cup\tau(\s_1)$. Remark  that in this case $C_1$ and $C_2$ are not adjacent. We denote by $h_i$ and $\ell_i$ the height and the width of $C_i$.

%\commenterwan{Le claim suivant n'est finalement plus utile}
%
%\begin{Claim}
%\label{clm:3cyl:l:112}
%The inequality  $\ell_0 \leq 2\ell_1$ holds.
%\end{Claim}
%\begin{proof}[Proof of Claim~\ref{clm:3cyl:l:112}]
%There are three horizontal saddle connections $\s'_0,\s'_1,\s'_2$ that are contained in the boundary of $C_0,C_1,C_2$ respectively, where $\s'_0$ joins $R_1$ to $R_2$, $\s'_i$ joins $R_i$ to $Q$ for $i=1,2$, and $\tau(\s'_1)=-\s'_2$. If $|\s'_0|>2|\s'_1|$ then $\s'_1$ is admissible, thus we must have $|\s'_0| \leq 2|\s'_1|$. Consequently,
%$$
%\ell_0 =2|\s_0|+|\s'_0| \leq 2|\s_1|+2|\s'_1| =2\ell_1.
%$$
%The claim is proved.
%\end{proof}

\begin{Claim}
\label{clm:3cyl:heights:112}
One of the following two equalities holds: $h_0=h_1$, or $h_0=2h_1$.
\end{Claim}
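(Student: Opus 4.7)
The plan is to transpose, mutatis mutandis, the two-step argument used in Claim~\ref{clm:3twins:3cyl:heights:22}. Recall that $C_0$ is preserved by $\tau$ and contains the double zero $Q$ on its boundary, whereas $C_1,C_2$ are exchanged by $\tau$ and contain $R_1$ and $R_2$ respectively on their boundaries; the slits $\s_0,\s_1,\s_2$ realizing the three-tori decomposition all join $Q$ to a simple zero. By Claim~\ref{lm:3tori:cyl:decomp} we may assume the horizontal direction is periodic on each torus, so that each $C_i$ is a genuine horizontal cylinder.

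First I would produce a vertical saddle connection $\delta$ of length $h_0$ that satisfies Convention~\ref{convention:saddle}, i.e.\ joins $R_1$ to $Q$. The natural candidate is obtained as follows: inside $C_1$ pick a vertical geodesic segment emanating from $R_1$ of length $h_1$ and landing on the boundary between $C_1$ and $C_0$; continue it vertically across the full height of $C_0$ to reach a zero on the top boundary of $C_0$. After applying the horocycle flow $\left(\begin{smallmatrix} 1 & t \\ 0 & 1 \end{smallmatrix}\right)$ we may arrange that this saddle connection is vertical and lands exactly at $Q$, so $|\delta|=h_0+h_1$ (or $h_0$ if $R_1$ lies on a common boundary of $C_0$ and $C_1$; for definiteness I treat the generic case). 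Since by hypothesis no admissible saddle connection exists on $(X,\omega)$, Lemma~\ref{lm:twin:saddles} applied to a small kernel-foliation perturbation provides a twin $\delta'$ (another $R_1$-to-$Q$ saddle connection with the same period as $\delta$) or a double-twin (an $R_1$-to-$R_2$ connection with period $2\omega(\delta)$). Any such competitor must traverse the three cylinders vertically, so its height decomposes as $a h_0+b h_1$ with $a,b\in\Z_{\geq 0}$ and $(a,b)\neq(a_\delta,b_\delta)$; equating periods yields a linear relation forcing $h_0$ to be an integer multiple $m h_1$ of $h_1$, with $m\geq 1$.

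The second step is to show $m\leq 2$. Following the template of Claim~\ref{clm:3twins:3cyl:heights:22}, I would construct a short vertical saddle connection $\eta$ from $R_1$ to $R_2$ that is invariant under $\tau$: take a vertical segment $\eta_1\subset C_1$ from $R_1$ to the midpoint of $\s_0$ (which sits on the common boundary of $C_0$ and $C_1$), set $\eta_2=\tau(\eta_1)\subset C_2$, and let $\eta=\eta_1\cup\eta_2$. Then $|\eta|=2h_1$. If $h_0>2h_1$, then any other vertical geodesic ray emanating from $R_1$ that is a candidate competitor of $\eta$ must cross the cylinder $C_0$ (since inside $C_1\cup C_2$ the only vertical saddle connection of length $\leq 2h_1$ is $\eta$ itself). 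Hence any parallel $R_1$-to-$Q$ or $R_1$-to-$R_2$ saddle connection distinct from $\eta$ has vertical coordinate at least $h_0>2h_1$, i.e.\ length strictly greater than $2|\eta|/2=2h_1$ in the $R_1$-to-$Q$ case and $>2\cdot 2h_1$ in the $R_1$-to-$R_2$ case. By Definition~\ref{def:simple:SC}(2), this makes $\eta$ admissible, contradicting the standing hypothesis on $\mathcal C$. Therefore $m\in\{1,2\}$, as desired.

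The main delicacy, and the step I expect to consume most of the care, is the combinatorial bookkeeping: unlike the $(2,2)^{\rm odd}$ case, a non-admissible saddle connection in $\H(1,1,2)$ may fail to have a twin and instead have a double-twin, and Definition~\ref{def:simple:SC}(2) treats simple-to-double and simple-to-simple connections asymmetrically (with the extra factor of~$2$). One must therefore verify carefully, case by case, which of Lemma~\ref{lm:twin:saddles}'s two alternatives occurs for the candidates $\delta$ and $\eta$, and that the competitor lengths forced by the cylinder heights really violate the admissibility inequalities. Once this bookkeeping is done, no genuinely new idea beyond those of Claim~\ref{clm:3twins:3cyl:heights:22} is required.
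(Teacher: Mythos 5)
Your two-step outline (first a divisibility relation $h_0\in\Z_{>0}\cdot h_1$, then the bound $h_0\le 2h_1$) is indeed the strategy the paper intends when it says the proof ``follows the same lines as'' Claim~\ref{clm:3twins:3cyl:heights:22}, but both steps as you have written them break down in the $(1,1,2)$ setting. The clearest problem is in your second step: the set $\eta=\eta_1\cup\eta_2$ is not a saddle connection, because in $\H(1,1,2)$ the slit $\s_0$ joins $Q$ to $R_1$ and is \emph{not} invariant by $\tau$; its image is $\tau(\s_0)$, joining $Q$ to $R_2$. Hence the midpoint of $\s_0$ is not a fixed point of $\tau$, and $\eta_2=\tau(\eta_1)$ emanates from the midpoint of $\tau(\s_0)$, a different point from the terminal point of $\eta_1$ --- the two segments do not concatenate. (This is precisely the feature that makes the $(2,2)^{\rm odd}$ argument work: there $\tau(\s_0)=-\s_0$, so $\tau$ fixes the midpoint and acts as $-\id$ there, making $\eta_1\cup\tau(\eta_1)$ a straight $\tau$-invariant segment.) The correct transposition, which is the one used in Claim~\ref{Claim:5:112}, is to take $\delta_1\subset C_1$ joining $R_1$ to $Q$ and crossing the core curve of $C_1$ once, so $|\delta_1|=h_1$, and to observe that if $h_0>2h_1$ then every other vertical saddle connection issued from $R_1$ must cross $C_0$ and therefore has vertical extent at least $h_0>2h_1=2|\delta_1|$; by Definition~\ref{def:simple:SC}(2) no competitor with $\lambda\le 1$ (ending at $Q$) or $\lambda\le 2$ (ending at $R_2$) can exist, so $\delta_1$ would be admissible, a contradiction.

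Your first step also has two gaps. First, the ``generic case'' you choose to treat never occurs: $R_1$ is an endpoint of the slit separating $X_0$ from $X_1$, so it always lies on the common boundary of $C_0$ and $C_1$, and the natural candidate (exactly as in Claim~\ref{clm:3twins:3cyl:heights:22} and as used in Claim~\ref{Claim:4:112}) is a saddle connection contained in $C_0$, joining $R_1$ to $Q$ and crossing its core curve once, of length exactly $h_0$. Second, and more seriously, the conclusion $h_0=mh_1$ with $m\in\Z_{>0}$ is asserted rather than derived, and you yourself identify the double-twin alternative as the place where the bookkeeping is delicate without carrying it out. It is a genuine issue: for a double-twin $\delta'$ one has $|\delta'|=2h_0=mh_1+nh_0$, and a naive count with $n=0$ and $m$ odd yields $h_0=\tfrac{m}{2}h_1$, which is not an integer multiple of $h_1$. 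To exclude this one needs the extra observation that a double-twin satisfies $\tau(\delta')=-\delta'$, hence crosses $C_1$ and $C_2$ the same number of times, forcing $m$ to be even (and the degenerate case of a double-twin entirely contained in $\overline{C}_0$ must be handled separately). As it stands, the first step does not close, so the claimed dichotomy $h_0=h_1$ or $h_0=2h_1$ does not yet follow from your argument.
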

\begin{proof}[Proof of Claim~\ref{clm:3cyl:heights:112}]
The proof of this claim follows the same lines as the proof of Claim~\ref{clm:3twins:3cyl:heights:22}.
\end{proof}

\begin{Claim}
\label{Claim:4:112}
If $h_0=h_1$ then, either
\begin{enumerate}
\item $\ell_0=\ell_1$, and $(X,\omega)$  is contained in the same component as $\Sur_\kappa(1,1,-1) \in \Omega E_9(\kappa)$, or
\item $\ell_0=2\ell_1$, and $(X,\omega)$  is contained in the same component as $\Sur_\kappa(1,2,0) \in \Omega E_{16}(\kappa)$.
\end{enumerate}
\end{Claim}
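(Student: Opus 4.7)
The proof of Claim~\ref{Claim:4:112} will mirror the proof of Claim~\ref{Claim:4:22} almost step by step, with adjustments reflecting that $\s_0$ now joins the simple zero $R_1$ to the double zero $Q$ (rather than two simple zeros as in the $(2,2)^{\rm odd}$ case), and that admissibility carries the split bound $|\s|>|\s_0|$ versus $|\s|>2|\s_0|$ depending on whether $\s$ ends at $Q$ or at $R_2$.

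Set $h := h_0 = h_1 = h_2$. Using the horocycle flow I would first arrange that $\overline{C}_0$ contains a vertical saddle connection $\delta_0$ from $R_1$ to $Q$ with $\omega(\delta_0)=(0,h)$. By the standing hypothesis that no admissible saddle connection exists on $(X,\omega)$, non-admissibility of $\delta_0$ produces a parallel saddle connection $\delta\neq\delta_0$ emanating from $R_1$ with $|\delta|\leq h$ if $\delta$ ends at $Q$ and $|\delta|\leq 2h$ if $\delta$ ends at $R_2$. Any vertical geodesic ray from $R_1$ must traverse a full cylinder before hitting another zero, so the bound $|\delta|\leq h$ together with $h_0=h_1=h$ forces $\delta$ to be a twin $\delta_1\subset C_1$ of $\delta_0$ with $\omega(\delta_1)=(0,h)$. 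Setting $\delta_2 := \tau(\delta_1)\subset C_2$ gives a $\tau$-symmetric vertical configuration across all three cylinders.

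Next I would introduce the diagonal analog of the saddle connection $\gamma$ used in Claim~\ref{Claim:4:22}: a saddle connection $\gamma$ joining $R_1$ to $R_2$ which crosses $\overline{C}_0$ once, passing through one of the two $\tau$-fixed regular points on the horizontal core of $C_0$ (so that $\tau(\gamma)=-\gamma$), giving $\omega(\gamma)=(x,h)$ for an $x$ depending on $\ell_0$ and $\ell_1$. Non-admissibility of $\gamma$ (with bound $|\gamma'|\leq 2|\gamma|$ since it ends at $R_2$) produces a parallel partner $\gamma'$ from $R_1$. The case analysis then parallels Claim~\ref{Claim:4:22} verbatim: either $\gamma'$ is contained entirely in $\overline{C}_0$, forcing the diophantine relation $\ell_0=2\ell_1$, or $\gamma'$ crosses from $\overline{C}_0$ into $\overline{C}_1$ (or $\overline{C}_2$), forcing $\ell_0=\ell_1$. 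Reading off the resulting periods and applying Lemma~\ref{lm:example:D} identifies $(X,\omega)$, up to $\GL^+(2,\R)$, with $\Sur_\kappa(1,2,0)\in\Omega E_{16}(\kappa)$ in the first case and with $\Sur_\kappa(1,1,-1)\in\Omega E_9(\kappa)$ in the second.

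The main obstacle is the adaptation of the $\tau$-symmetry argument: in the $(2,2)^{\rm odd}$ setting $\s_0$ itself was $\tau$-invariant and its midpoint was automatically a $\tau$-fixed point used to build $\gamma$, whereas here $\tau$ exchanges $R_1$ and $R_2$ so that $\s_0$ is not $\tau$-invariant. One must instead locate the correct $\tau$-fixed regular point of $C_0$ (coming from the fact that $\tau_{|X_0}$ is the elliptic involution, which has two fixed points on the horizontal core of $C_0$), verify that the diagonal saddle connection $\gamma$ through such a point indeed lies in the stratum-theoretic picture of Figure~\ref{fig:examples:D}, and use the split admissibility bound $|\s|>2|\s_0|$ (rather than $|\s|>|\s_0|$) to close off the branch where $\gamma'$ ends at $R_2$. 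Once this is done, the remaining bookkeeping of periods and the final identification with the two explicit surfaces $\Sur_\kappa(1,1,-1)$ and $\Sur_\kappa(1,2,0)$ is routine.
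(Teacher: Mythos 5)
Your first step (the vertical saddle connection $\delta_0\subset C_0$ from $R_1$ to $Q$ whose non-admissibility forces a twin in $C_1$) matches the paper's. The gap is in your second step. The hypothesis being propagated through Theorem~\ref{thm:no:admisSC} is that no saddle connection \emph{satisfying Convention~\ref{convention:saddle}} is admissible; for $\kappa=(1,1,2)$ admissibility (Definition~\ref{def:simple:SC}) is only defined for saddle connections joining a simple zero to the double zero $Q$. Your $\gamma$ joins $R_1$ to $R_2$, so ``non-admissibility of $\gamma$'' is not available: the hypothesis says nothing about such a saddle connection, and the split bound $\lambda>2$ you invoke applies to the \emph{partner} of a Convention-satisfying saddle connection that happens to end at $R_2$, not to a saddle connection that itself starts and ends at the simple zeros. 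Nor can your $\gamma$ be recovered as a double-twin of something admissible-testable: since all zeros lie on the cylinder boundaries, every saddle connection from $R_1$ to $Q$ has vertical holonomy a positive multiple of $h$, so there is no saddle connection from $R_1$ to $Q$ with holonomy $\tfrac12\omega(\gamma)=(x/2,h/2)$. The attempt to transplant the $\tau$-anti-invariant ``through a fixed point of $\tau$'' curve from Claim~\ref{Claim:4:22} therefore produces an object outside the scope of the standing hypothesis, and the ensuing case analysis has no logical support.

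The paper's route avoids this entirely: it takes $\gamma$ to be the diagonal of $C_1$ joining $R_1$ to $Q$, with $\omega(\gamma)=(\ell_1,h)$ --- this does satisfy Convention~\ref{convention:saddle}. Non-admissibility together with Lemma~\ref{lm:twin:saddles} then yields a parallel $\gamma'$ starting in $C_0$ (the other upward ray at $R_1$) which is either a twin or a double-twin of $\gamma$. If $\gamma'$ is a twin, $\omega(\gamma')=(\ell_1,h)$ forces $\ell_0=\ell_1$ and $(X,\omega)=\Sur_\kappa(1,1,-1)$; if $\gamma'$ is a double-twin, $\omega(\gamma')=(2\ell_1,2h)$ forces $\gamma'$ to cross the core curve of $C_0$ twice, whence $\ell_0=2\ell_1$ and $(X,\omega)=\mathrm{diag}(1,\tfrac12)\cdot\Sur_\kappa(1,2,0)$. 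Your final dictionary between $\ell_0/\ell_1\in\{1,2\}$ and the surfaces $\Sur_\kappa(1,1,-1)$, $\Sur_\kappa(1,2,0)$ is correct, but you need to reach it through a saddle connection to which the no-admissible-saddle-connection hypothesis actually applies.
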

\begin{proof}[Proof of Claim~\ref{Claim:4:112}]
Set $h:=h_0=h_1$. We first consider a saddle connection $\delta$ contained in $C_0$, joining $R_1$ to $Q$ and intersecting
the core curve of $C_0$ only once. As usual we assume $\delta$ to be vertical (hence $\omega(\delta)=(0,h)$).
%Note $\delta'=\tau(\delta)$ is also a vertical saddle connection in $C_0$ joining $R_2$ to $Q$.
By assumption $\delta$ has a twin or a double-twin $\delta_1$ (necessarily $\delta_1$ starts in $C_1$).
Clearly $\delta_1$ is a twin: otherwise it must end in $C_2$, hence it must cross the core curve of $C_0$ at least once.
In particular its length satisfies $|\delta_1|\geq  3h> 2|\delta|$ that is a contradiction.

Let $\gamma$ be the saddle connection contained in $C_1$ and joining $R_1$ to $Q$, as shown in Figure~\ref{fig:no:admisSC:112:a} below.

\begin{figure}[htb]
\begin{minipage}[t]{0.4\linewidth}
\centering
\begin{tikzpicture}[scale=0.6]
\foreach \x  in {(0,5), (0,2.5), (1.5,0)} {\draw \x rectangle +(2,2); \draw[dashed] \x -- +(2,2);}
\foreach \x in {(0,2.5), (0.5,4.5), (1.5,0), (2,2.5), (2,2), (3.5,0)} \filldraw[fill=black] \x circle (2pt);
\foreach \x in {(0,7), (1,2.5), (1.5,5), (1.5,4.5), (2,7)} \filldraw[fill=white] \x circle (2pt);
\foreach \x in {(0,5), (0,4.5), (0.5,7), (1.5,2.5), (1.5,2), (2,5), (2,4.5), (3,0), (3.5,2)} {\filldraw[fill=white] \x circle (3pt); \draw \x +(45:3pt) -- +(225:3pt) +(135:3pt) -- +(315:3pt);}

\draw (-0.5,6) node {\tiny $C_2$} (-0.5,3.5) node {\tiny $C_0$} (1,1) node {\tiny $C_1$} (2,7) node[right] {\tiny $R_2$} (2,5) node[right] {\tiny $Q$} (3.5,2) node[right] {\tiny $Q$} (3.5,0) node[right] {\tiny $R_1$};

\draw (0.8,6) node[above] {\tiny $\tau(\gamma)$} (1,3.5) node[above] {\tiny $\gamma'$} (2.5,1)  node[above] {\tiny $\gamma$};

\draw (2.3,3.5) node {\tiny $\delta$};
\draw (4,1) node {\tiny $\delta_1$};
\draw (1.5,-1) node {$\ell_0=\ell_1$};
\end{tikzpicture}
\end{minipage}
\begin{minipage}[t]{0.4\linewidth}
\centering
\begin{tikzpicture}[scale=0.6]
\draw (0,2.5) rectangle (4,4.5); \draw  (2,5) rectangle (4,7); \draw (3.5,0) rectangle (5.5,2);
\foreach \x in {(0,2.5), (1.5,2.5), (2,5), (3.5,0)}  \draw[dashed] \x -- +(2,2);

\foreach \x in {(0,2.5), (0.5,4.5), (3.5,0), (4,2.5), (4,2), (5.5,0)} \filldraw[fill=black] \x circle (2pt);
\foreach \x in {(2,7), (3,2.5), (3.5,5), (3.5,4.5), (4,7)} \filldraw[fill=white] \x circle (2pt);
\foreach \x in {(0,4.5), (2,5),  (2.5,7), (3.5,2.5), (3.5,2), (4,5), (4,4.5), (5,0), (5.5,2)} {\filldraw[fill=white] \x circle (3pt); \draw \x +(45:3pt) -- +(225:3pt) +(135:3pt) -- +(315:3pt);}

\draw (1.5,6) node {\tiny $C_2$} (-0.5,3.5) node {\tiny $C_0$} (3,1) node {\tiny $C_1$} (4,7) node[right] {\tiny $R_2$} (4,5) node[right] {\tiny $Q$} (5.5,2) node[right] {\tiny $Q$} (5.5,0) node[right] {\tiny $R_1$};

\draw (4.3,3.5) node {\tiny $\delta$};
\draw (6,1) node {\tiny $\delta_1$};
\draw (2.8,6) node[above] {\tiny $\tau(\gamma)$} (1,3.5) node[above] {\tiny $\gamma'$} (4.5,1)  node[above] {\tiny $\gamma$};
\draw (2.5,-1) node {$\ell_0=2\ell_1$};
\end{tikzpicture}
\end{minipage}

\caption{Claim~\ref{Claim:4:112}: $h_0=h_1$: the surfaces  $\Sur_\kappa(1,1,-1)$ and $\textrm{diag}(1,\frac1{2})\cdot \Sur_\kappa(1,2,0)$.}
\label{fig:no:admisSC:112:a}
\end{figure}
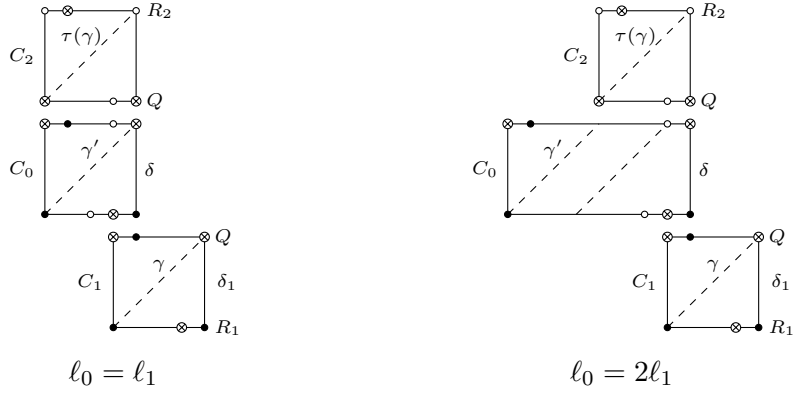

By assumption $\omega(\gamma)=(\ell_1,h)$. Now there exists another saddle connection $\gamma'$ starting from $R_1$ in the same direction
as $\gamma$.  Observe that $\gamma'$ must start in $C_0$. Using Lemma~\ref{lm:twin:saddles}, we can assume that $\gamma'$ is either a twin or a double-twin of $\gamma$. \medskip

If $\gamma'$ is a twin of $\gamma$ then $\omega(\gamma')=(\ell_1,h)$. Hence $\ell_0=\ell_1$: $(X,\omega)=\Sur_\kappa(1,1,-1)$.

If $\gamma'$ is a double-twin of $\gamma$ then $\omega(\gamma')=(2\ell_1,2h)$. Hence $\gamma'$ crosses twice the core curves of
$C_0$ implying that $\ell_0=2\ell_1$: $(X,\omega)=\textrm{diag}(1,\frac1{2})\cdot\Sur_\kappa(1,2,0)$. This proves the claim.
\end{proof}

\begin{Claim}
\label{Claim:5:112}
If $h_0=2h_1$ then $\ell_0=2\ell_1$. In addition $(X,\omega)$ belongs to the same connected component as $\Sur_\kappa(1,1,1) \in \Omega E_9(\kappa)$.
\end{Claim}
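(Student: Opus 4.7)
The plan is to adapt the strategy of Claim~\ref{Claim:5:22} to the $(1,1,2)$ setting. I shall (i) use non-admissibility of a short vertical saddle connection to force a vertical alignment of $R_1$ and $R_2$ through $C_0$, (ii) use non-admissibility of a tilted saddle connection to pin down $\ell_0 = 2\ell_1$, and (iii) identify the resulting period data with those of $\Sur_{(1,1,2)}(1,1,1)$.

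For (i), I apply the horocycle flow so that the saddle connection $\delta_1 \subset C_1$ of length $h_1$ joining $R_1$ to $Q$ is vertical. Since $\delta_1$ satisfies Convention~\ref{convention:saddle}, our hypothesis forbids it from being admissible, and a small kernel-foliation perturbation together with Lemma~\ref{lm:twin:saddles} produces either a twin or a double-twin of $\delta_1$. A potential twin would have to emanate from the sector of the $4\pi$ cone at $R_1$ lying inside $C_0$ and reach $Q$ at vertical distance $h_1 = h_0/2$; but that height lies in the interior of $C_0$ where $Q$ is absent, so no twin exists. Hence a double-twin $\delta$ must exist: a vertical, $\tau$-invariant saddle connection of length $2h_1 = h_0$ from $R_1$ to $R_2$ contained in $\overline{C}_0$. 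In particular $R_1$ and $R_2$ are vertically aligned across $C_0$.

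For (ii), mimicking Claim~\ref{Claim:5:22}, I construct a tilted saddle connection $\gamma$ in $\overline{C}_1 \cup \overline{C}_0$ joining $R_1$ to $Q$ whose horizontal period equals $2\ell_1$. Non-admissibility of $\gamma$ combined with Lemma~\ref{lm:twin:saddles} yields a parallel saddle connection $\gamma'$ contained in $\overline{C}_0$, whose horizontal period is an integer multiple $k\ell_0$ of $\ell_0$ with $k \geq 1$. Matching $k\ell_0 = 2\ell_1$ and invoking the auxiliary admissibility argument on a horizontal saddle connection inside $X_0$ used in Claim~\ref{Claim:5:22} (which forces $\ell_0 \geq \ell_1$), I obtain $k = 1$, hence $\ell_0 = 2\ell_1$. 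The sub-case $k = 2$ (which would give $\ell_0 = \ell_1$, as in the first alternative of Claim~\ref{Claim:5:22}) is ruled out here by a stratum obstruction specific to $\H(1,1,2)$: the corresponding geometric configuration would require an extra simple zero to appear in the interior of $\overline{C}_0$. Once the period data are determined, a direct verification shows that $(X,\omega)$ coincides, up to the action of $\GL^+(2,\R)$ and the kernel foliation, with $\Sur_{(1,1,2)}(1,1,1)\in \Omega E_9(1,1,2)$; by Proposition~\ref{prop:neighbor}, it lies in the same connected component.

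The main obstacle is step (ii): constructing $\gamma$ with exactly the correct horizontal period and carrying out the case analysis that eliminates the alternative $\ell_0 = \ell_1$. The extra delicacy compared to Claim~\ref{Claim:5:22} is that the relevant $\tau$-fixed point now lies in the interior of $C_0$ rather than at the midpoint of a boundary saddle connection, so the natural choice of $\gamma$ must be made with care and the resulting twin/double-twin analysis is a bit more involved.
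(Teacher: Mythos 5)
Your step (i) is sound and is essentially the paper's own first step: the vertical saddle connection $\delta_1\subset C_1$ cannot have a twin because every other vertical ray issued from $R_1$ must cross the core curve of $C_0$ and therefore has length at least $h_0=2h_1$; non-admissibility then forces a vertical double-twin $\delta\subset\overline{C}_0$ from $R_1$ to $R_2$, so $R_1$ and $R_2$ are vertically aligned across $C_0$. For step (ii) the paper argues slightly differently but in the same spirit: it takes $\gamma\subset C_1$ joining $R_1$ to $Q$ with holonomy $(\ell_1,h_1)$, produces a double twin $\gamma'\subset\overline{C}_0$ with holonomy $(2\ell_1,h_0)$, and, using the vertical alignment from step (i), obtains $2\ell_1=k\ell_0$. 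In both versions everything reduces to showing that the winding number $k$ equals $1$.

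That elimination of $k=2$ (i.e.\ $\ell_0=\ell_1$) is exactly where your argument has a genuine gap. The ``stratum obstruction'' you invoke --- that the configuration $h_0=2h_1$, $\ell_0=\ell_1$ would force an extra simple zero in the interior of $\overline{C}_0$ --- is asserted rather than proved, and it is false as a geometric statement. Indeed, the eigenform $\Sur_{(1,1,2)}(2,1,0)\in\Omega E_{16}(1,1,2)$ of Lemma~\ref{lm:example:D} has $\tau$-invariant torus a $\lambda\times\lambda=2\times2$ square and exchanged tori of size $w\times h=2\times1$, so its horizontal three-cylinder decomposition satisfies $h_0=2h_1$ and $\ell_0=\ell_1$, with no extra zero anywhere; since $\Omega E_{16}(4)=\ety$ (Theorem~\ref{thm:comp:Prym4}) and Proposition~\ref{prop:collapse} holds, no surface in its component carries an admissible saddle connection, and a direct check shows that after shearing $\delta_1$ vertical the double twin $\gamma'$ of $\gamma$ winds twice around $C_0$, i.e.\ $k=2$ genuinely occurs in $\H(1,1,2)$. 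So this alternative cannot be dismissed by a non-existence argument; if it is to be absorbed into the statement at all, it must be by a connectivity argument identifying the resulting surface with the $\Omega E_{16}$ component of $\Sur_{(1,1,2)}(1,2,0)$ (in the style of the deformation connecting $\Sur_{(2,2)}(2,1,0)$ to $\Sur_{(2,2)}(1,2,0)$ at the end of the proof of Theorem~\ref{thm:main}), not by the obstruction you describe. As written, your derivation of $\ell_0=2\ell_1$ therefore does not go through at precisely the point you flag as the main obstacle.
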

\begin{proof}
Set $h=h_1$. Let $\delta_1$ be a saddle connection contained in $C_1$ joining $R_1$ to $Q$ and intersecting the core curve of $C_1$ only once.
We can suppose that $\delta_1$ is vertical. By assumption, there exists another vertical saddle connection $\delta$ starting from $R_1$. Observe that $\delta$ must intersect the core curve of  $C_0$, thus we have $|\delta| \geq h_0 =2|\delta_1|$. By assumption, $\delta$ must be a double-twin of $\delta_1$, which means that $\delta$  joins $R_1$ to $R_2$ and is contained in $C_0$.

Now, let $\gamma$ be the saddle connection in $C_1$ joining $R_1$ to $Q$ as shown in Figure~\ref{fig:no:admisSC:112:b}.

\begin{figure}[htb]
\begin{minipage}[t]{0.4\linewidth}
\centering
\begin{tikzpicture}[scale=0.5]
\foreach \x in {(0,7), (5,0)} {\draw \x rectangle +(2,2); \draw[dashed] \x -- +(2,2);}
\draw (1.5,2.5) rectangle (5.5,6.5); \draw[dashed] (1.5,2.5) -- (5.5,6.5);

\foreach \x in {(1.5,2.5), (2.5,6.5), (5,0), (5.5,2.5), (5.5,2), (7,0)} \filldraw[fill=black] \x circle (2pt);
\foreach \x in {(0,9), (1.5,7), (1.5,6.5), (2,9), (4.5,2.5), (5.5,6.5)} \filldraw[fill=white] \x circle (2pt);
\foreach \x in {(0,7), (0.5,9), (2,7), (2,6.5), (5,2.5), (5,2), (6.5,0), (7,2)} {\filldraw[fill=white] \x circle (3pt); \draw \x +(45:3pt) -- +(225:3pt) +(135:3pt) -- +(315:3pt);}

\draw (-0.5,8) node {\tiny $C_2$} (1,4.5) node {\tiny $C_0$} (4.5,1) node {\tiny $C_1$} (2,9) node[right] {\tiny $R_2$} (2,7) node[right] {\tiny $Q$} (5.5,6.5) node[right] {\tiny $R_2$} (5.5,2.5) node[right] {\tiny $R_1$} (7,2) node[right] {\tiny $Q$} (7,0) node[right] {\tiny $R_1$};

\draw (0.8,8) node[above] {\tiny $\tau(\gamma)$} (3.5,4.5) node[above] {\tiny $\gamma'$} (6,1)  node[above] {\tiny $\gamma$};
\end{tikzpicture}
\end{minipage}
\caption{Claim~\ref{Claim:5:112}: $h_0=2h_1$: the surface  $\Sur_\kappa(1,1,1)$}
\label{fig:no:admisSC:112:b}
\end{figure}
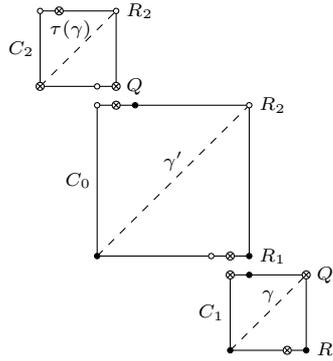

By assumption, there exists a saddle connection $\gamma'$ starting from $R_1$ and parallel to $\gamma$. The same argument as above shows
that $\gamma'$ is a double-twin of $\gamma$ and is contained in $C_0$. It follows that $\ell_0=2\ell_1$. Thus
$(X,\omega)$ belongs to the  component of $\Sur_\kappa(1,1,1)$. The claim is proved.
\end{proof}

%***************************************************************************
%***************************************************************************
\subsection{Reduction from Case B to Case A}
\label{proof:case:B}
Let $(X,\omega) \in \PrD(\kappa)$ and let $\s_0$ be a saddle connection in $X$ satisfying Convention~\ref{convention:saddle}. We suppose that $\s_0$ has a twin $\s_1$. Moreover, if $\kappa=(2,2)^{\rm odd}$ we assume that  $\s_2:=-\tau(\s_1)\neq \s_1$ and $\s_1 \cup \s_2$ is non-separating, and  if $\kappa=(1,1,2)$ we assume that $\s_0,\s_1$ is non-separating. Our aim is to show that there exists in the component of $(X,\omega)$ a surface having a  family of homologous saddle connections satisfying Case A (this is Lemma~\ref{lm:twinSC:B-A:22}). We first show

\begin{Lemma}
\label{lm:D:square}
Let $(X,\omega) \in \Omega E_D(\kappa)$, where $\kappa\in \{(2,2)^{\rm odd}, (1,1,2)\}$. If there exists $c \in H_1(X,\Z)^{-}$ satisfying $c\neq 0$ and $\omega(c)=0$ then $D$ is a square.
In particular, up to rescaling by  $\GL^+(2,\R)$, all the absolute periods of $\omega$ belong to $\Q+\imath\Q$.
\end{Lemma}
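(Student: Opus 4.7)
The plan is to study how the generator $T$ of the order $\mathfrak{i}(\Ord_D) \subset \End(\Prym(X,\tau))$ acts on $H_1(X,\R)^-$ and $H^1(X,\R)^-$. Writing $T^2 - eT - f = 0$ with $D = e^2 + 4f$, the two eigenvalues are $\lambda = (e+\sqrt D)/2$ and $\lambda' = (e-\sqrt D)/2$. Since $T$ is defined over $\R$ and $\omega$ is an eigenform with eigenvalue $\lambda$, both $\mathrm{Re}(\omega)$ and $\mathrm{Im}(\omega)$ lie in the $\lambda$-eigenspace $S \subset H^1(X,\R)^-$, of real dimension $2$; dually, let $V, V' \subset H_1(X,\R)^-$ denote the $\lambda$- and $\lambda'$-eigenspaces of $T$, each of real dimension $2$.

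First I would show the identification
$$
\ker\bigl(\omega : H_1(X,\R)^- \to \C\bigr) \;=\; V'.
$$
Self-adjointness of $T$ gives $V' \perp S$ under the duality pairing, yielding $V' \subset \ker\omega$. For the reverse inclusion I would argue that $\omega|_V : V \to \C$ is an $\R$-linear isomorphism. The restriction map $S \to V^*$ is bijective (any $s \in S$ already annihilates $V'$, so vanishing on $V$ would mean vanishing on all of $H_1(X,\R)^-$), and $\mathrm{Re}(\omega), \mathrm{Im}(\omega)$ are linearly independent in $S$ because otherwise $\omega$ would be proportional to a real $1$-form, violating $\int_X \omega\wedge\bar\omega > 0$. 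Hence their restrictions form a basis of $V^*$, and $\omega|_V$ is an $\R$-linear isomorphism.

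The second step is an integrality argument. Given a nonzero $c \in H_1(X,\Z)^-$ with $\omega(c) = 0$, the previous step forces $c \in V'$, i.e. $Tc = \lambda' c$. But $T$ preserves the lattice $H_1(X,\Z)^-$, so in any $\Z$-basis $Tc$ has integer coordinates while $\lambda' c$ has coordinates $\lambda' n_j$; choosing $j$ with $n_j \neq 0$ yields $\lambda' \in \Q$, hence $D$ is a perfect square.

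For the ``in particular'' assertion, once $D$ is a square, $\lambda, \lambda' \in \Q$ and the operator $T^* - \lambda\,\mathrm{Id}$ is defined over $\Q$, so $S$ is a $\Q$-subspace of $H^1(X,\R)^-$. Picking a rational basis $\eta_1, \eta_2$ of $S$, I take the unique $A \in \GL(2,\R)$ with $A\cdot(\mathrm{Re}(\omega), \mathrm{Im}(\omega)) = (\eta_1, \eta_2)$; after possibly swapping $\eta_1, \eta_2$ we may arrange $A \in \GL^+(2,\R)$. The rescaled form $A \cdot \omega$ then has rational real and imaginary parts, so its periods on $H_1(X,\Z)^-$ lie in $\Q + \imath\Q$; combined with the vanishing of $\omega$ on $H_1(X,\Q)^+$ forced by $\tau^*\omega = -\omega$, this yields the same conclusion for the full absolute period map. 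The only substantive obstacle is the nondegeneracy of $\omega \wedge \bar\omega$ used to rule out $\omega|_V$ being degenerate; the rest is bookkeeping between integrality of $T$ and rationality of its eigenvalues.
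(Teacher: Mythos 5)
Your proof is correct, but it follows a genuinely different route from the paper's. The paper picks a primitive representative of $c$, completes it to a symplectic basis $(\alpha_1,\beta_1,\alpha_2,\beta_2)$ of $H_1(X,\Z)^-$ with $\beta_2=c$, uses the area formula (which collapses to a single determinant because $\omega(c)=0$) to normalize two periods to $(1,0)$ and $(0,1)$ via $\GL^+(2,\R)$, and then invokes the explicit integral normal form of the generator $T$ from \cite[Proposition 4.2]{LanNg11} adapted to such a basis; a direct computation with the eigenform condition forces two of the integer entries to vanish, giving $D=e^2$ and rationality of the remaining periods in one stroke. You instead argue coordinate-freely: you identify $\ker(\omega)\cap H_1(X,\R)^-$ with the Galois-conjugate eigenspace $V'$ of $T$ (the inclusion $V'\subset\ker\omega$ from the duality pairing, the reverse inclusion from positivity of $\int_X\omega\wedge\bar\omega$), so that a nonzero integral class killed by $\omega$ is an integral eigenvector for $\lambda'$, forcing $\lambda'\in\Q$ and hence $D$ a perfect square; the rationality of the eigenspace $S$ then yields the normalization. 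Your version is more conceptual and self-contained --- it does not rely on the external matrix normal form --- while the paper's computation produces the explicit normalized model directly. Two small remarks: the orthogonality $V'\perp S$ really only needs that $T^*$ on cohomology is dual to $T$ on homology (self-adjointness with respect to the intersection form is what you need to know that $V$ and $V'$ are symplectic, hence each of dimension $2$, so that $T$ is not a scalar); and in the last step it is worth saying explicitly that $\lambda'$ is an algebraic integer satisfying a monic integer quadratic, so $\lambda'\in\Q$ gives $\sqrt{D}=e-2\lambda'\in\Q$ and hence $D$ a perfect square in $\Z$. Neither point is a gap.
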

\begin{proof}[Proof of Lemma~\ref{lm:D:square}]
We can assume that $c$ is primitive in $H_1(X,\Z)$, that is for any $n \in \N, n>1, \cfrac{1}{n}c \not\in H_1(X,\Z)$. Pick a symplectic basis $(\alpha_1,\beta_1,\alpha_2,\beta_2)$ of $H_1(X,\Z)^-$ with $\beta_2=c$. Set $\mu_i=\langle \alpha_i,\beta_i \rangle$, where $\langle,\rangle$ is the intersection form of $H_1(X,\Z)$, and
$\omega(\alpha_1)=x_1+\imath y_1, \omega(\beta_1)=z_1 +\imath t_1, \omega(\alpha_2)=x_2+\imath y_2$.
Since
$$
\mathrm{Area}(X,\omega)=\mu_1\det\left(\begin{array}{rr}
x_1 & z_1\\
y_1 & t_1\\
\end{array} \right) +\mu_2\det\left(\begin{array}{rr}
x_2 & 0\\
y_2 & 0\\
\end{array}\right) = \mu_1\det\left(\begin{array}{rr}
x_1 & z_1\\
y_1 & t_1\\
\end{array} \right) > 0
%\mu_1\det\left(\begin{array}{rr}
%x_1 & z_1\\
%y_1 & t_1\\
%\end{array}\right).
$$
\noindent it follows that $(x_1+\imath y_1,z_1+\imath t_1)$ is a basis of $\R^2$. Using $\GL^+(2,\R)$ we can assume that
$(x_1,y_1)=(1,0)$ and $(z_1,t_1)=(0,1)$. By~\cite[Proposition 4.2]{LanNg11} there exists a unique
generator $T$ of $\Ord_D$ which is given in the basis $(\alpha_1,\beta_1,\alpha_2,\beta_2)$ by a matrix of the
form
$$
T=\left(\begin{array}{cc}
e\Id_2 & \left(\begin{smallmatrix} a & b \\ c & d \\ \end{smallmatrix}\right)\\
\frac{\mu_1}{\mu_2}\left(\begin{smallmatrix} d & -b \\ -c & a \\ \end{smallmatrix}\right) & 0 \\
\end{array}\right)
$$
with $(a,b,c,d,e)\in \Z^5$ such that $T^*\omega=\lambda\omega$ and $\lambda>0$. Observe that the
discriminant $D$ satisfies $D=e^2-4 \frac{\mu_1}{\mu_2}(bc-ad)$. Since $\mathrm{Re}(\omega)=(1,0,x_2,0)$
and $\mathrm{Im}(\omega)=(0,1,y_2,0)$ in the above coordinates,
%$$ (1,0,x_2,0)\cdot T  = (\lambda,0,\lambda x_2,0) \text{ and } (0,1,y_2,0)\cdot T  =  (0,\lambda, \lambda y_2, 0).$$
direct computations show that $b=d=0$. Hence $D=e^2$ and $(x_2,y_2) \in \Q^2$. The lemma is proved.
\end{proof}

\begin{Lemma}
\label{lm:twinSC:B-A:22}
Assume that $\kappa=(2,2)^{\rm odd}$. Then there exists in the component of $(X,\omega)$ a surface having a
triple of homologous saddle connections $\gamma_0,\gamma_1,\gamma_2$, where $\gamma_0$ is invariant, $\gamma_1$ and $\gamma_2$ are exchanged by the involution.
\end{Lemma}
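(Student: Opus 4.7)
The plan is to use the hypothesis of Case B together with the square-discriminant dichotomy to produce, in the same connected component of $\Omega E_D(2,2)^{\rm odd}$, a new surface carrying a triple of parallel saddle connections whose $\tau$-image pair is separating, i.e. a Case A configuration. Since $[\s_0]-[\s_1]$ is a non-trivial anti-invariant absolute cycle with vanishing $\omega$-period, Lemma~\ref{lm:D:square} forces $D$ to be a square and, up to the action of $\GL^+(2,\R)$, all absolute periods of $\omega$ lie in $\Q+\imath\Q$. In particular $(X,\omega)$ is a translation cover of an elliptic curve branched only at the zeros $P,Q$, and the three saddle connections $\s_0,\s_1,\s_2=-\tau(\s_1)$ all project to the same path downstairs.

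First I would normalize via the horocycle flow so that $\s_0,\s_1,\s_2$ are horizontal with the same period, and then apply the complete periodicity of Prym eigenforms to obtain a horizontal cylinder decomposition of $X$. A counting argument at the singularities shows that $\s_0,\s_1,\s_2$ exhaust the three positive horizontal rays leaving $P$ and the three negative ones arriving at $Q$; thus they occur as maximal boundary segments of the horizontal cylinders. The involution $\tau$ acts on this decomposition: $\s_0$ must lie on the boundary of a $\tau$-invariant cylinder $C_0$, while $\s_1,\s_2$ bound either a single $\tau$-invariant cylinder (swapping sides) or a pair of cylinders exchanged by $\tau$. The non-separation hypothesis on $\s_1\cup\s_2$ further restricts the combinatorics to a short list of diagrams.

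Next, in each remaining combinatorial type, I will exhibit an explicit deformation staying in the same connected component of $\Omega E_D(2,2)^{\rm odd}$ -- typically a simultaneous Dehn twist in a pair of $\tau$-conjugate cylinders, followed, if necessary, by a small move along the one-dimensional kernel foliation leaf through $(X,\omega)$ -- that produces a surface carrying a new triple $\gamma_0,\gamma_1,\gamma_2$ of horizontal parallel saddle connections with $\gamma_0$ invariant, $\gamma_1,\gamma_2$ exchanged by $\tau$, and $\gamma_1\cup\tau(\gamma_1)$ now separating. Once such a triple is produced, Lemma~\ref{lm:3twins:decomp:3tori} provides the desired decomposition into three flat tori.

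The main obstacle is the enumeration and treatment of the compatible horizontal cylinder diagrams, and the verification that in each case one genuinely has a $\tau$-equivariant surgery that flips the separating/non-separating character of $\s_1\cup\s_2$. The enumeration is finite thanks to the fixed topology of $\H(2,2)^{\rm odd}$ and the constraints imposed by $\tau$ and by the three twins; the verification that a suitable twist exists ultimately reduces to noting that a Dehn twist in a $\tau$-exchanged pair of cylinders changes how $\s_1$ and $\s_2$ are connected through the surface around the non-separating cycle, thereby transforming Case B into Case A after finitely many steps.
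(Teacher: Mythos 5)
Your first step --- observing that $[\s_1]-[\s_2]$ (equivalently $[\s_0]-[\s_1]$) is a nonzero anti-invariant cycle with vanishing period, so that Lemma~\ref{lm:D:square} makes $D$ a square and all absolute periods rational --- is exactly how the paper begins. After that, however, your proposal diverges and the central mechanism does not work. A cylinder twist (or any continuous deformation inside $\PrD(2,2)^{\rm odd}$, or any homeomorphism of $X$) transports the tracked saddle connections $\s_1$ and $\tau(\s_1)$ continuously, and the property of $\s_1\cup\tau(\s_1)$ being separating or non-separating is a topological invariant of that isotopy class; it cannot be ``flipped'' by finitely many Dehn twists applied to the same triple. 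What must actually be produced is a \emph{new} triple of homologous saddle connections, generally in a \emph{different} direction, whose union is separating --- and your proposal gives no construction of it, deferring instead to an unspecified finite enumeration of cylinder diagrams. There is also a smaller gap upstream: you invoke complete periodicity to get a horizontal cylinder decomposition, but the direction of the $\s_i$ is only known to carry saddle connections (the cycle $\s_1*(-\s_2)$ has zero holonomy, so it is not a horizontal closed geodesic), and a periodic direction is not guaranteed a priori.

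The idea you are missing is the paper's cut-and-reglue construction. Cutting $X$ along $\s_0,\s_1,\s_2$ and closing up each of the three boundary components yields a closed surface $(X',\omega')$ with no zeros, hence a flat torus, carrying three marked slits $c'_0,c'_1,c'_2$ and an induced involution $\tau'$ fixing $c'_0$ and swapping $c'_1,c'_2$. The torus $(X',\omega')$ depends only on the absolute periods, so it is constant as $(X,\omega)$ moves along its kernel foliation leaf while the slits move; one may therefore normalize $X'$ to the standard torus with $|c'_0|$ small and non-parallel to a suitable pair of parallel rational closed geodesics $\alpha^\pm$ passing through the endpoints $P'_1,P'_2$ and $Q'_1,Q'_2$ (rationality of the relevant vectors is where Lemma~\ref{lm:D:square} is used a second time). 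Reconstructing $(X,\omega)$ from the slit torus then exhibits a four-cylinder decomposition in the direction of $\alpha^\pm$, and the largest ($\tau$-invariant) cylinder contains the desired triple $\gamma_0,\gamma_1,\gamma_2$ with $\tau(\gamma_0)=-\gamma_0$, $\tau(\gamma_1)=-\gamma_2$ and $\gamma_1\cup\gamma_2$ separating. Without this (or an equivalent explicit production of a new separating triple), your argument does not close.
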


\begin{proof}
Let $c_0$,  $c_1$,  and $c_2$ denote the simple closed curves $\s_1*(-\s_2), \s_0*(-\s_1)$ and $\s_0*(-\s_2)$ respectively. Note that  we have $c_0=c_2-c_1$ and $\tau(c_1)=-c_2$. By assumption $0\neq c_0  \in H_1(X,\Z)$. If $0=c_1 \in H_1(X,\Z)$ then $c_2=-\tau(c_1)=0 \in H_1(X,\Z)$, which implies that $c_0=0 \in H_1(X,\Z)$. Thus we can conclude that all of the curves $c_0,c_1,c_2$ are non-separating.

Cut $X$ along $\s_0,\s_1,\s_2$, we obtain a connected  surface whose boundary has three components corresponding to $c_0,c_1,c_2$. Gluing the pair of geodesic segments in each boundary component together, we get a closed translation surface $(X',\omega')$ with three marked geodesic segments. Since the angle between two consecutive twin saddle connections is $2\pi$, we derive that $\omega'$ has no zeros, thus $(X',\omega')$ must be a torus. We denote the geodesic segments in $X'$ corresponding to $c_0,c_1,c_2$ by $c'_0,c'_1,c'_2$ respectively. The involution $\tau$ of $X$  induces an involution $\tau'$ on $X'$, which leaves $c'_0$ invariant and exchanges $c'_1$ and $c'_2$.  Let $P'_i$ and $Q'_i$, $i=0,1,2$, denote the endpoints of $c'_i$, where $P'_i$ (respectively, $Q'_i$) corresponds to $P$ (respectively, to $Q$).

\begin{figure}[htb]
\centering
\begin{tikzpicture}[scale=0.4]
\draw (-12,3) -- +(10,0) -- +(10,-6) -- +(0,-6) -- cycle;
\draw (-12,1) -- (-2,1) (-12,-1) -- (-2,-1);
\foreach \x in {(-9,1), (-7,1), (-5,1)} \draw \x +(0.1,0) -- +(0.1,-2) +(-0.1,0) -- +(-0.1,-2);

\draw (2,-3) -- (8,-3) -- (8,-1) -- (10,-1) -- (10,-3) -- (12,-3) -- (12,3) -- (10,3) -- (10,5) -- (8,5) -- (8,3) -- (2,3) -- cycle;
\draw (2,-1) -- (8,-1) (10,-1) -- (12,-1) (8,3) -- (10,3);

\foreach \x in {(2,3), (8,3), (10,3), (12,3)} \draw \x -- +(0,-4);

\foreach \x in {(-9,1), (-7,1), (-5,1), (2,-1), (8,-1), (10,-1), (8,5), (10,5), (12,-1)} \filldraw[fill=white] \x circle (4pt);
\foreach \x in {(-9,-1), (-7,-1), (-5,-1), (2,3), (2,-3), (8,3), (8,-3), (10,3), (10,-3), (12,3), (12,-3)} \filldraw[fill=black] \x circle (4pt);

\draw (-9,1) node[above] {$\scriptstyle P'_1$} (-7,1) node[above] {$\scriptstyle P'_0$} (-5,1) node[above] {$\scriptstyle P'_2$} (-9,-1) node[below] {$\scriptstyle Q'_1$} (-7,-1) node[below] {$\scriptstyle Q'_0$} (-5,-1) node[below] {$\scriptstyle Q'_2$};

\draw (-12,1) node[left] {$\scriptstyle \alpha^+$} (-12,-1) node[left] {$\scriptstyle \alpha^-$};

\draw (2,1) node[left] {$\scriptstyle \gamma_1$}  (8,1) node[left] {$\scriptstyle \gamma_2$} (10,1) node[left] {$\scriptstyle \gamma_0$} (12,1) node[right] {$\scriptstyle \gamma_1$};

\draw (2,-2) node[left] {\tiny $\s_0$} (8,-2) node[left] {\tiny $\s_0$} (7.6,4) node {\tiny $\s_1$} (10.4,4) node {\tiny $\s_1$} (9.6,-2) node {\tiny $\s_2$} (12.4,-2) node {\tiny $\s_2$};

\draw (-7,-4) node {$(X',\omega')$} (7,-4) node {$(X,\omega)$};

\end{tikzpicture}

\caption{Cylinders decomposition in direction of $\alpha^{\pm}$.}
\label{fig:twinsSC:BC}
\end{figure}
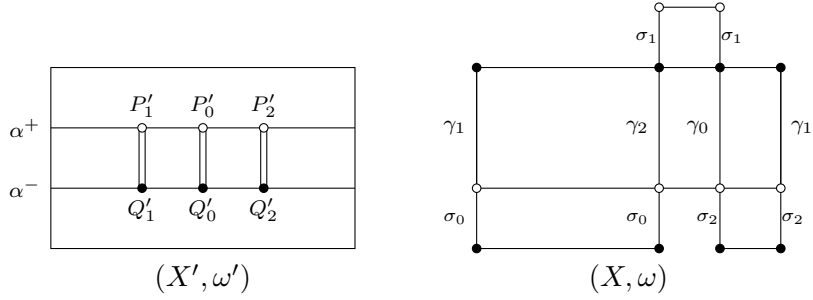

Observe that as $(X,\omega)$ moves in the leaf of the kernel foliation, the surface $(X',\omega')$ is the same, only the segments $c'_i$ vary. Therefore we can assume that $(X',\omega')$ is the standard torus $\C/(\Z\oplus\imath\Z)$, and the length of $c'_0$ is small. Let $\delta_1$ (respectively, $\eta_1$) denote the geodesic segment of minimal length from  $P'_0$ to $P'_1$ (respectively, from $Q'_0$ to $Q'_1$). Note that as $(X,\omega)$ moves in  the kernel foliation leaf, $\omega'(\delta_1)$ and $\omega'(\eta_1)$ are invariant. Therefore, we can assume that $\delta_1$ and $c'_0$ are not parallel.

Since $c'_0$ and $c'_1$ are parallel and have the same length, we see that $c'_0\cup\delta_1\cup c'_1\cup\eta_1$ is the boundary of an embedded parallelogram in $X'$.  It follows in particular that $\delta_1$ and $\eta_1$  are parallel and have the same length.  Let $\delta_2=\tau'(\eta_1)$ and $\eta_2=\tau'(\delta_1)$. We have $\delta=\delta_1\cup \delta_2$ is a geodesic segment joining $P'_1$ to $P'_2$, and $\eta=\eta_1\cup \eta_2$ is a geodesic segment joining $Q'_1$ to $Q'_2$.

Since $\omega(c_0)=0$, By Lemma~\ref{lm:D:square}, for any $c \in H_1(X,\Z)$, we have $\omega(c)\in \Q+\imath\Q$. Therefore  $\omega'(\delta)=\omega'(\eta) \in \Q+\imath\Q$. Since $X'$ is the standard torus, there exists a pair of parallel simple closed geodesics $\alpha^+, \alpha^-$ of $X'$ such that $\delta \subset \alpha^+$, and $\eta \subset \alpha^-$ (see Figure~\ref{fig:twinsSC:BC}).  When $|c'_i|$ is small enough and non-parallel to $\alpha^{\pm}$, the geodesics $\alpha^+$ and $\alpha^-$ cut $X'$ into two cylinders, one of which contains all the segments $c'_0,c'_1,c'_2$.

Recall that $(X,\omega)$ is obtained from $(X',\omega')$ by slitting along $c'_0,c'_1,c'_2$, and regluing the geodesic segments in the boundary. By construction, we see that $(X,\omega)$ admits a decomposition into four cylinders in the direction of $\alpha^\pm$ as shown in Figure~\ref{fig:twinsSC:BC}. Let $C_0$ denote the largest cylinder in this decomposition, then it is easy to see that there exist in $C_0$ three homologous saddle connections $\gamma_0, \gamma_1,\gamma_2$ such that $\tau(\gamma_0)=-\gamma_0$, $\tau(\gamma_1)=-\gamma_2$, and $\gamma_1\cup\gamma_2$ is a separating curve as desired.
\end{proof}

\begin{Lemma}
\label{lm:twinSC:B-A:112}
Assume that $\kappa=(1,1,2)$ and $\s_0$ has a twin $\s_1$ such that the curve $\s_0*(-\s_1)$ is non-separating. Then there exists in the component of $(X,\omega)$ a surface having two pairs of homologous saddle connections $(\s'_1,\s''_1)$ and $(\s'_2,\s''_2)$, where $\s'_i$ and $\s''_i$ join the simple zero $R_i$ to the double zero $Q$, and $\{\s'_2,\s''_2\}=\tau(\{\s'_1,\s''_1\}$).
\end{Lemma}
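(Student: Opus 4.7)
The plan is to mirror the proof of Lemma~\ref{lm:twinSC:B-A:22} in the stratum $\H(1,1,2)$. Define the closed cycles $c_1=\s_0*(-\s_1)$ and $c_2=\tau(c_1)=\tau(\s_0)*(-\tau(\s_1))$. By assumption $c_1$ is non-separating; since $\tau$ is a homeomorphism, so is $c_2$. Both satisfy $\omega(c_i)=0$, so Lemma~\ref{lm:D:square} applies: $D$ is a square and, after normalization by $\GL^+(2,\R)$, all absolute periods of $\omega$ lie in $\Q+\imath\Q$.

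I would then cut $X$ along the four saddle connections $\s_0,\s_1,\tau(\s_0),\tau(\s_1)$ and reglue the pair of twin arcs on each resulting boundary component by identifying $\s_0\sim\s_1$ and $\tau(\s_0)\sim\tau(\s_1)$. The result should be a closed translation surface $(X',\omega')$ of smaller genus carrying two marked geodesic segments (the images of the two identifications); the Prym involution $\tau$ descends to an involution $\tau'$ of $X'$ exchanging the two marked segments, and $(X,\omega)$ can be recovered from $(X',\omega')$ together with the marked segments via the inverse surgery. As $(X,\omega)$ varies in its kernel-foliation leaf, $(X',\omega')$ is preserved and only the marked segments move.

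Identifying $(X',\omega')$ with the standard torus $\C/(\Z\oplus\imath\Z)$, I would deform $(X,\omega)$ in its kernel-foliation leaf so that the two marked segments become short and are not parallel to a chosen rational direction of the torus transverse to them. For that direction the torus decomposes into parallel cylinders with a single large cylinder $C_0$ containing both marked segments. Reversing the surgery, this decomposition lifts to a cylinder decomposition of $(X,\omega)$, and inside the lift of $C_0$ one finds (as in the $(2,2)$ case of Lemma~\ref{lm:twinSC:B-A:22}) two pairs of homologous saddle connections $(\s'_1,\s''_1)$ joining $R_1$ to $Q$ and $(\s'_2,\s''_2)=\tau(\{\s'_1,\s''_1\})$ joining $R_2$ to $Q$, with $\s'_1\cup\s''_1$ separating, producing a Case~A configuration as desired.

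The main obstacle is the surgery step. In the $(2,2)$ setting, the three twin saddle connections at the $6\pi$ cone point $P$ are necessarily equally spaced and split it into three sectors of angle $2\pi$, so the twin identifications smooth out all cone points exactly. Here, however, the four saddle connections at $Q$ come in two parallel pairs whose directions differ by $\pi$, and the four sectors at $Q$ generically have angles of the form $(\pi,\pi,\pi,3\pi)$ or $(2\pi,\pi,2\pi,\pi)$, which are not all multiples of $2\pi$. The delicate point will be to track carefully how the twin identifications combine the sectors at $Q$ and at $R_1,R_2$, verify via Euler characteristic that the genus drops from $3$ to $1$, and confirm that no residual cone singularity remains so that $(X',\omega')$ really is a torus.
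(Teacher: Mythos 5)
Your overall strategy---cut and reglue along the twin configuration to get a flat torus with marked slits, move in the kernel foliation leaf to reposition the slits, then read off the homologous pairs---is exactly the route the paper takes, and your closing worry about the sector angles at the double zero $Q$ points at the right place. But that is precisely where the proposal has a genuine gap, and the gap propagates into the final step. The surgery does \emph{not} produce a torus with two marked segments exchanged by $\tau'$. Since $\tau$ fixes $Q$ and $\tau^*\omega=-\omega$, it acts on the $6\pi$-cone at $Q$ by rotation by $3\pi$, while the twins $\s_0,\s_1$ make an angle at $Q$ that is a multiple of $2\pi$; a short computation then shows that the four prongs of $\s_0,\s_1,\tau(\s_0),\tau(\s_1)$ always cut the cone at $Q$ into sectors of angles $(2\pi,\pi,2\pi,\pi)$ in cyclic order (your alternative $(\pi,\pi,\pi,3\pi)$ cannot occur). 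The two $2\pi$-sectors close up into regular slit endpoints $Q'_1,Q'_2$, but the two $\pi$-sectors are joined to each other \emph{across} a slit: they become the two sides of an interior point $Q''$ of a third marked segment $c''$ of holonomy $2\omega(\s_0)$ running from a point $R''_1$ (coming from $R_1$) through $Q''$ to a point $R''_2$. So the correct output is a torus with \emph{three} slits $c'_1,c'_2,c''$, with $\omega'(c'_1)=\omega'(c'_2)=\tfrac12\omega'(c'')$, where $\tau'$ exchanges $c'_1$ and $c'_2$ and preserves $c''$ (fixing its midpoint $Q''$). You explicitly defer this bookkeeping, and your claimed two-slit picture is wrong.

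This matters for the conclusion, because in the reconstructed surface the pair $(\s''_1,\s''_2)$ must emanate from $Q''$, i.e.\ from the \emph{interior} of the long slit, so the endgame cannot be copied verbatim from the $(2,2)$ case (``a single large cylinder containing both marked segments'' is not the right picture when one slit is twice as long as the others and carries the image of $Q$ in its interior). The paper instead uses the kernel foliation (and the rationality coming from $\omega(c_1)=0$, as in Lemma~\ref{lm:D:square}) to place $c'_1,c'_2,c''$ on three distinct parallel simple closed geodesics of the torus, and then exhibits segments $\s'_i$ from $Q'_i$ to $R'_i$ and $\s''_i$ from $Q''$ to $R''_i$ with $\tau'(\s'_1)=-\s'_2$, $\tau'(\s''_1)=-\s''_2$ and $\s'_i*c'_i$ homologous to $\s''_i*c''_i$; after reconstruction these give the two required pairs. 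To complete your proof you need (a) the angle bookkeeping above establishing the three-slit structure of $(X',\omega')$, and (b) a construction of the connecting segments adapted to that structure in place of the single-cylinder argument.
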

\begin{proof}
We will use similar ideas to the proof of Lemma~\ref{lm:twinSC:B-A:22}. Let $c_1=\s_0*(-\s_1)$ and $c_2=\tau(c_1)$. By the cutting-gluing construction along $c_1$ and $c_2$ (using the assumption that $c_1$ is non-separating), we get a flat torus $(X',\omega')$ with three marked geodesic segments $c'_1,c'_2, c''$ such that $\omega'(c'_1)=\omega'(c'_2)=1/2\omega'(c'')$ (see Figure~\ref{fig:twinSC:B-A:112}). For $i=1,2$, we denote the endpoints of $c'_i$ by $R'_i$ and $Q'_i$ so that $R'_i$ corresponds to $R_i$ and $Q'_i$ corresponds to $Q$. We denote the endpoints of  $c''$ by $R''_1$ and $R''_2$ such that $R''_i$ corresponds to $R_i$. The midpoint of $c''$ corresponds to $Q$, we denote it by $Q''$. We denote the subsegment of $c''$ between $Q''$ and $R''_i$  by $c''_i$. The Prym involution of $X$ gives rise to an involution $\tau'$ of $X'$ which satisfies $\tau'(c'_1)=-c'_2, \tau'(c''_1)=-c''_2$.
\begin{figure}[htb]
\centering
\begin{tikzpicture}[scale=0.7]
\draw (-3,-2) rectangle (3,2);
\foreach \x in {(-1.5,0), (0,0)} {\draw \x -- +(-0.5,2) +(-0.5,-2) -- +(-0.75,-1); \draw \x ++(0,0.05) -- +(-0.75,-1); \draw \x ++(0,-0.05) -- +(-0.75,-1);}
\foreach \x in {(1.5,0), (0,0)} {\draw \x -- +(0.5,-2) +(0.5,2) -- +(0.75,1); \draw\x ++(0,0.05) -- +(0.75,1); \draw \x ++(0,-0.05) -- +(0.75,1);}

\foreach \x in {(-1.5,0), (0,0), (1.5,0)} {\filldraw[fill=white] \x circle (3pt); \draw \x +(45:3pt) -- +(225:3pt) +(135:3pt) -- +(315:3pt);}
\foreach \x in {(-2.25,-1), (-0.75,-1)} \filldraw[fill=white] \x circle (2pt);
\foreach \x in {(2.25,1), (0.75,1)} \filldraw[fill=black] \x circle (2pt);

\draw (-2,2) node[above] {\tiny $\s'_1$} (-2,-2) node[below] {\tiny $\s'_1$} (-0.5,2) node[above] {\tiny $\s''_1$} (-0.5,-2) node[below] {\tiny $\s''_1$} (0.5,2) node[above] {\tiny $\s''_2$} (0.5,-2) node[below] {\tiny $\s''_2$} (2,2) node[above] {\tiny $\s'_2$} (2,-2) node[below] {\tiny $\s'_2$};

\draw (-1, 0) node {\tiny $Q'_1$} (-2.6, -1) node {\tiny $R'_1$} (0.5,0) node {\tiny $Q''$} (-1.1,-1) node {\tiny $R''_1$} (1.1,1) node {\tiny $R''_2$}  (2.6,1) node {\tiny $R'_2$} (2,0) node {\tiny $Q'_2$};
\end{tikzpicture}
\caption{}
\label{fig:twinSC:B-A:112}
\end{figure}

As $(X,\omega)$ moves in the leaf of the kernel foliation, the surface $(X',\omega')$ remains the same, only the segments $c'_1,c'_2,c''$ vary. Therefore we can assume that  $c'_1,c'_2,c''$ are contained in three distinct parallel simple closed geodesics of $X'$. Changing the direction of $c''$ slightly, we see that there exist geodesic segments $\s'_i$ from $Q'_i$ to $R'_i$, and $\s''_i$ from $Q''$ to $R''_i, \ i=1,2$, (see Figure~\ref{fig:twinSC:B-A:112})  such that
\begin{itemize}
 \item[$\bullet$] $\tau'(\s'_1)=-\s'_2$ and $\tau'(\s''_1)=-\s''_2$,
 \item[$\bullet$] $\s'_i*c'_i$ and $\s''_i*c''_i$ are homologous.
 \end{itemize}
Reconstruct $(X,\omega)$ from  $(X',\omega')$ we see that $\s'_i$ and $\s''_i$ are homologous saddle connections, and the pairs $(\s'_1,\s''_1)$ and $(\s'_2,\s''_2)$ have the desired properties.
\end{proof}

%****************************************************************
\subsection{Reduction from Case C to Case A}
\label{proof:case:C}
Let $(X,\omega)$ be a Prym eigenform in $\Prym(2,2)^{\rm odd} \sqcup \Prym(1,1,2)$, and  let $\s_0$ be a saddle connection on $X$ satisfying Convention~\ref{convention:saddle}.
If $(X,\omega) \in \Prym(2,2)^{\rm odd}$ we suppose that $\s_0$ has a twin $\s_1$ which is also invariant by  the Prym involution $\tau$, and if $(X,\omega)\in \Prym(1,1,2)$ we suppose $\s_0$ has a double twin $\s_1$ (which is invariant by $\tau$). Our aim is to show that there exists in the component of $(X,\omega)$ a surface having a family of saddle connections satisfying Case A for both $\kappa=(2,2)^{\rm odd}$ and  $\kappa=(1,1,2)$. We first show
\begin{Lemma}
\label{lm:cut}
Define
\begin{itemize}
 \item[$\bullet$] $c= \s_0*(-\s_1)$, if $(X,\omega) \in \Prym(2,2)^{\rm odd}$,
 \item[$\bullet$] $c=\s_0*\tau(s_0)*(-\s_1)$, if $(X,\omega) \in \Prym(1,1,2)$.
\end{itemize}
Then in both cases, we have $c \neq 0 \in H_1(X,\Z)^{-}$, and $\omega(c)=0$.
\end{Lemma}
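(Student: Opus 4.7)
The statement bundles together three claims: $c$ is a well-defined element of $H_1(X,\Z)$, it lies in the anti-invariant part, and $\omega(c)=0$. I would dispatch the first and third by direct computation. In Case~1, set $c=\s_0-\s_1$; then $\partial c=(Q-P)-(Q-P)=0$, and the twin relation $\omega(\s_1)=\omega(\s_0)$ gives $\omega(c)=0$ at once. In Case~2, the only way to read the concatenation $\s_0*\tau(\s_0)*(-\s_1)$ so that the endpoints match up is as the $1$-chain $c=\s_0-\tau(\s_0)-\s_1$; its boundary telescopes to $(Q-R_1)-(Q-R_2)-(R_2-R_1)=0$, and combining $\omega(\tau(\s_0))=-\omega(\s_0)$ (from $\tau^*\omega=-\omega$) with the double-twin relation $\omega(\s_1)=2\omega(\s_0)$ gives $\omega(c)=\omega(\s_0)+\omega(\s_0)-2\omega(\s_0)=0$.

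Anti-invariance $\tau_*c=-c$ is equally routine. In Case~1 the standing assumption of Case~C is exactly $\tau(\s_0)=-\s_0$ and $\tau(\s_1)=-\s_1$, so $\tau_*c=-c$ is immediate. In Case~2 the involution $\tau$ permutes $\s_0$ and $\tau(\s_0)$ while $\tau(\s_1)=-\s_1$ by the double-twin hypothesis; summing gives $\tau_*c=\tau(\s_0)-\s_0+\s_1=-c$.

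The main obstacle is showing $c\neq 0$. My plan is to exhibit an anti-invariant class $c'\in H_1(X,\Z)^-$ with $c\cdot c'\neq 0$, forcing $c$ itself to be non-zero. In Case~1, the midpoints $M_0$ of $\s_0$ and $M_1$ of $\s_1$ are fixed points of $\tau$; they must be distinct, for if $M_0=M_1$ then $\s_0$ and $\s_1$ would be two parallel segments of the same length sharing a midpoint and therefore coincide as subsets of $X$, contradicting $\s_0\neq\s_1$. Since any $\tau$-fixed point on $\s_1$ has to be its midpoint, this also yields $M_0\in\s_0\setminus\s_1$. Using the local description of $\tau$ as a $180^\circ$ rotation around $M_0$ and $M_1$, I would construct a path $\gamma$ from $M_0$ to $M_1$ that meets $\s_0$ in a single transverse crossing near $M_0$ and avoids $\s_1$ altogether, and then set $c'=\gamma-\tau(\gamma)$. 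The identity $\s_i\cdot\tau(\gamma)=\tau(\s_i)\cdot\tau^2(\gamma)=-\s_i\cdot\gamma$ (where the first equality uses that $\tau$ preserves the orientation of $X$, and the second uses $\tau(\s_i)=-\s_i$) then gives
\[
c\cdot c'=2(\s_0\cdot\gamma-\s_1\cdot\gamma)=\pm 2\neq 0.
\]
For Case~2 the same strategy applies after rewriting $c=\alpha-\tau(\alpha)$ with $\alpha=\s_0-\s_1^+$ a chain from the $\tau$-fixed midpoint $M$ of $\s_1$ to $Q$: the role of $M_0=M_1$ is played by $M$ (a regular fixed point of $\tau$ lying on $\s_1$ but, after a small perturbation if necessary, not on $\s_0\cup\tau(\s_0)$), and the same intersection-number computation produces a cycle $c'$ pairing non-trivially with $c$. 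The delicate point, on which I would spend most of the writing, is the topological construction of the auxiliary path $\gamma$ with the prescribed intersection pattern, which is where the explicit local structure of $\tau$ around its fixed midpoints enters.
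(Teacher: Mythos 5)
Your treatment of the routine parts (the boundary computation, $\omega(c)=0$, and $\tau_*c=-c$) is correct and matches what the paper does in one line. The gap is in the non-vanishing argument, and it is a genuine one. The identity $c\cdot c'=2(\s_0\cdot\gamma-\s_1\cdot\gamma)$ is not valid, because the $1$-chains $\gamma$ and $\tau(\gamma)$ have their endpoints $M_0,M_1$ \emph{on} the cycle $c$, so the quantities $\s_i\cdot\gamma$ are not well defined and the ``doubling'' $\s_i\cdot(\gamma-\tau(\gamma))=2\,\s_i\cdot\gamma$ only holds for interior crossings. If you analyze the loop $c'=\gamma*\overline{\tau(\gamma)}$ locally (using that $d\tau=-\mathrm{id}$ at $M_0$ and $M_1$), you find that $c'$ passes \emph{once} transversally through $\s_0$ at $M_0$ and once through $\s_1$ at $M_1$, each contributing a single $\pm 1$, while the interior crossings of $\gamma$ with $c$ are genuinely doubled. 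The correct count is therefore $c\cdot c'=\epsilon_0-\epsilon_1+2N$ with $\epsilon_0,\epsilon_1\in\{\pm1\}$ and $N\in\Z$ the signed number of interior crossings of $\gamma$ with $c$, and nothing in your construction controls this quantity. Worse, the approach cannot be repaired by a cleverer choice of $\gamma$ alone: if $c$ \emph{were} separating (the situation you must exclude), then $c\cdot c'=0$ for \emph{every} cycle $c'$, so every admissible $\gamma$ would automatically produce $\epsilon_0-\epsilon_1+2N=0$ --- e.g.\ by pushing $\gamma$ entirely into one complementary component, which is possible since $M_0,M_1\in c$. An argument that purports to give $\pm2$ unconditionally is therefore proving too much; some global input beyond the existence of the fixed midpoints is required. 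The same objection applies verbatim to your Case~2 reduction $c=\alpha-\tau(\alpha)$.

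The paper's route supplies exactly that global input, and you should compare it with your plan. One cuts $X$ along the simple closed curve $c$ and observes that, because $\s_0$ and $\s_1$ are twins, the two angles they cut out at $P$ (resp.\ at $Q$) are $2\pi$ and $4\pi$, distributed so that each side of $c$ carries a $2\pi$ angle at one zero and a $4\pi$ angle at the other. If $c$ were null-homologous, cutting would disconnect $X$, and regluing the two boundary segments of each piece (legitimate since $\omega(\s_0)=\omega(\s_1)$) would produce a closed translation surface with a single cone point of angle $4\pi$, i.e.\ a single zero of order $1$; this contradicts $\sum k_i=2g-2$. The $(1,1,2)$ case is handled by the same cut-and-reglue construction, which lands in $\H(1,1)$ and hence forces connectedness. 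I would recommend replacing your intersection-theoretic step by this Gauss--Bonnet obstruction, which is where the actual content of $c\neq 0$ lives.
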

\begin{proof}
From the definition of $c$,  we have $\tau(c)=-c$, hence $c \in H_1(X,\Z)^{-}$. It is also clear that $\omega(c)=0$.  All we need to show is that $c\neq 0 \in H_1(X,\Z)$.

We first  consider the case $(X,\omega) \in \Prym(2,2)^{\rm odd}$. Remark that the pair of angles at $P$ and $Q$ determined  $\s_0$ and $\s_1$ is $(2\pi,4\pi)$.
Since $\tau(\s_0)=-\s_0$ and $\tau(\s_1)=-\s_1$ we see that the angle $2\pi$ at $P$ and the angle $4\pi$ at $Q$ belong to the same side of $c$,
and vice versa. Cutting $X$ along $c$ we get a surface whose boundary has two components, each of which is a union of two geodesic segments
corresponding to $\s_0$ and $\s_1$. Since $\s_0$ and $\s_1$ are twins, the two segments in each component has the same length, therefore we
can glue them together to get a closed (possibly disconnected) translation surface $(X',\omega')$ with two marked geodesic segments $\eta_1, \eta_2$.
If the new surface is disconnected, then each component is a translation surface with only one singularity of angle $4\pi$. Since such a surface
does not exist, we conclude that $X'$ is connected, and hence $c \neq 0$ in $H_1(X,\Z)$.

For the case $(X,\omega)\in \Prym(1,1,2)$, by a similar  construction, that is cutting along $c$, then closing the boundary components of the new surface (by gluing the path corresponding to $\s_0\cup\tau(\s_0)$ and the segment corresponding to $\s_1$), we also get a translation surface $(X',\omega')$ having two singularities with cone angle $4\pi$. The same argument as above shows that this surface belongs to $\H(1,1)$, therefore $c\neq 0 \in H_1(X,\Z)$.
\end{proof}

\begin{Lemma}
\label{lm:twinsSC:CA:22}
Let $(X,\omega)\in \Prym(2,2)^{\rm odd}$ be a Prym eigenform having a twin $\s_1$ of $\s_0$ that is invariant by $\tau$. Then
one can find in the connected component of $(X,\omega)$ another surface having a triple of
homologous saddle connections.
\end{Lemma}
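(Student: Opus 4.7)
The strategy is to adapt the cut-and-paste construction used in Lemma~\ref{lm:twinSC:B-A:22} to the present setting. First, Lemma~\ref{lm:cut} gives that $c := \s_0 * (-\s_1)$ is a non-zero class of $H_1(X,\Z)^-$ with $\omega(c) = 0$. By Lemma~\ref{lm:D:square}, $D$ is a square and, after applying a suitable element of $\GL^+(2,\R)$, we may assume that all absolute periods of $\omega$ lie in $\Q+\imath\Q$. Cutting $X$ along the figure built from $\s_0$ and $\s_1$ and regluing the resulting boundary components (exactly as in the proof of Lemma~\ref{lm:cut}) produces a closed translation surface $(X',\omega') \in \H(1,1)$ carrying two marked geodesic segments $\eta_1,\eta_2$ in place of the cut. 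Since $\tau(c) = -c$, the Prym involution descends to an involution $\tau'$ of $X'$; a dimension count forces $\tau'$ to be the hyperelliptic involution of $X'$, and $\tau'$ must exchange $\eta_1$ and $\eta_2$.

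Next, I would exploit the kernel foliation of $\PrD(2,2)^{\rm odd}$ passing through $(X,\omega)$: moving along a leaf changes only the relative periods, hence the positions of $\eta_1,\eta_2$ on $X'$ vary while $(X',\omega')$ itself stays in a fixed kernel foliation leaf of its own Prym eigenform locus in $\H(1,1)$. Because $\omega(c)=0$, the segments $\eta_1$ and $\eta_2$ are parallel and of equal length; by the rationality of all absolute periods, I can choose the deformation parameter so that $\eta_1$ and $\eta_2$ lie in two distinct parallel closed geodesics $\alpha^+$ and $\alpha^-$ of $X'$, exchanged by $\tau'$.

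Choosing the slope of $\eta_i$ suitably (and using that $\tau'$ is the hyperelliptic involution of a genus two Prym eigenform, which is completely periodic by \cite{Lanneau:Manh:cp}), the direction of $\alpha^\pm$ is a periodic direction. The curves $\alpha^+,\alpha^-$ then cut $X'$ into two cylinders, exactly one of which (call it $C_0$) contains both segments $\eta_1$ and $\eta_2$, and $C_0$ is invariant by $\tau'$. Reconstructing $(X,\omega)$ by slitting $X'$ along $\eta_1,\eta_2$ and regluing creates a cylinder decomposition of $(X,\omega)$ in the same direction, in which the cylinder corresponding to $C_0$ carries three homologous horizontal saddle connections $\gamma_0,\gamma_1,\gamma_2$ joining $P$ to $Q$, with $\tau(\gamma_0)=-\gamma_0$, $\tau(\gamma_1)=-\gamma_2$, and with $\gamma_1\cup\gamma_2$ separating (bounding the other tori in the cylinder decomposition). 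This is the configuration of Case~A.

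\textbf{Main obstacle.} The delicate point is the choice of the direction of $\alpha^\pm$: it must simultaneously be a periodic direction of $X'$, be compatible with the relative position of $\eta_1$ and $\eta_2$, and give a cylinder decomposition such that only one cylinder swallows both marked segments and is globally invariant by $\tau'$. This requires the combination of the rationality provided by Lemma~\ref{lm:D:square} with the complete periodicity of Prym eigenforms in $\H(1,1)$, and carefully checking that reconstructing $(X,\omega)$ produces three distinct saddle connections (not coincidences) with the required symmetry under $\tau$.
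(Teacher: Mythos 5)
Your overall scheme (cut along $c=\s_0*(-\s_1)$ via Lemma~\ref{lm:cut}, pass to a surface $(X',\omega')\in\H(1,1)$ with two marked segments $\eta_1,\eta_2$, then use the kernel foliation to align things with a periodic direction) is the right skeleton, but it breaks at the pivotal step. Your claim that ``a dimension count forces $\tau'$ to be the hyperelliptic involution of $X'$'' is false, and the correct argument hinges on exactly the opposite fact. The involution $\tau'$ induced by $\tau$ on $X'$ has only two fixed points (the images of the two fixed points of $\tau$ not lying on $\s_0\cup\s_1$), whereas the hyperelliptic involution $\iota$ of a genus-two surface has six; hence $\tau'\neq\iota$. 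The paper's proof uses this to form the involution $\iota\circ\tau'$, which satisfies $(\iota\circ\tau')^*\omega'=\omega'$ and whose quotient $X''=X'/\langle\iota\circ\tau'\rangle$ is a flat torus with two marked points $(X'',\omega'',P''_1,P''_2)\in\H(0,0)$, rigid under the kernel-foliation deformation of $(X,\omega)$. The periodic direction is then chosen downstairs on $X''$ (a closed geodesic through $P''_1$ missing $P''_2$) and lifted. Without the torus quotient you have no mechanism for producing the needed cylinder decomposition: your subsequent assertion that $\eta_1,\eta_2$ can be placed on two parallel closed geodesics of $X'$ that ``cut $X'$ into two cylinders, exactly one of which contains both segments'' is the torus argument of Lemma~\ref{lm:twinSC:B-A:22} transplanted verbatim to a genus-two surface, where two parallel simple closed curves do not bound such a two-cylinder decomposition in general (and a closed geodesic through the zero $P_1$ is a union of saddle connections, not a regular closed geodesic).

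Two smaller points. First, Lemma~\ref{lm:D:square} and the rationality of periods are not what drives this case; the paper's proof never needs them here, since a flat torus has closed geodesics through $P''_1$ avoiding $P''_2$ in a dense set of directions. Second, even after obtaining the three-cylinder decomposition of $(X,\omega)$ in the lifted direction, the triple of homologous saddle connections does not yet appear: one must still perturb along the kernel foliation, $(\tilde X,\tilde\omega)=(X,\omega)+(0,-\epsilon)$, to split into four cylinders before the twins $\gamma_0,\gamma_1,\gamma_2$ with $\tau(\gamma_0)=-\gamma_0$, $\tau(\gamma_1)=-\gamma_2$ materialize in the large $\tau$-invariant cylinder. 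You flagged the choice of direction as the ``main obstacle'' but did not resolve it, and the resolution genuinely requires the quotient torus $X''$, which your (incorrect) identification of $\tau'$ with $\iota$ forecloses.
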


\begin{proof}[Proof of Lemma~\ref{lm:twinsSC:CA:22}]
Cutting $X$ along $c=\s_0\cup\s_1$ and gluing the two segments of each boundary component together, we get a closed
translation surface $(X',\omega')\in\H(1,1)$ (see the proof of Lemma~\ref{lm:cut}). By construction there exist on $X'$ a pair of
disjoint geodesic segments $\eta_1,\eta_2$ such that $\omega'(\eta_1)=\omega'(\eta_2)=\omega(\s_0)$ and $\eta_i$ joins a
zero of $\omega'$ to a regular point. Let $(P_i, Q_i)_{i=1,2}$ denote the endpoints of $\eta_i$, where $P_i$ (respectively, $Q_i$)
corresponds to $P$ (respectively, to $Q$). The numbering is chosen so that  $P_1$ and $Q_2$ are the zeros of $\omega'$.
The involution $\tau$ of $X$ descends to an involution of $X'$ exchanging $\eta_1$ and $\eta_2$. We denote this involution by
$\tau'$. Remark that $\tau'$ has two fixed points in $X'$, none of which are contained in the segments $\eta_1,\eta_2$. Note also that as  $(X,\omega)$ moves in its leaf of the kernel foliation, $(X',\omega')$ also moves in its leaf of the kernel foliation in $\H(1,1)$ (only the relative periods change).

Let $\iota$ be the hyperelliptic involution of $X'$. Since $\iota$ has six fixed points but $\tau'$ has two, we have $\iota \neq \tau$. Remark that
$\iota\circ\tau$ is also an involution of $X'$ satisfying $(\iota\circ\tau)^*\omega'=\omega'$. The surface $X''=X'/\langle \iota\circ\tau \rangle$
is an elliptic curve. Let $\pi$ is the branched covering $\pi: X'\rightarrow X''$, which is is ramified at $P_1$ and $Q_2$. Then
$\omega'$ descends to a holomorphic $1$-form $\omega''$ on $X''$ so that $\omega'=\pi^*\omega''$. For $i=1,2$ let
$P''_i,Q''_i,\eta''_i$ denote the images of $P_i,Q_i,\eta_i$ in $X''$. Note that we have $\omega''(\eta''_1)=\omega''(\eta''_2)=\omega(\s_0)$.

We consider the tuple $(X'',\omega'',P''_1,P''_2)$ as an element in $\H(0,0)$, that is the moduli space of flat tori with two marked points.
We first observe that as $(X,\omega)$ moves in its leaf of the kernel foliation, the corresponding surfaces $(X'',\omega'',P''_1,P''_2)$
are the same in $\H(0,0)$ (only $\omega''(\eta''_1)=\omega''(\eta''_2)$ change).
Indeed all the coordinates of $(X'',\omega'',P''_1,P''_2)$ are determined by the absolute periods of
$(X,\omega)$.

Let $\alpha''_1$ be a simple closed geodesic of $(X'',\omega'')$ which passes through $P''_1$ and does not contain $P''_2$.
Using $\GL(2,\R)$, we can assume that $\alpha''_1$ is horizontal.  By moving in the kernel foliation leaf of $(X,\omega)$, we can also
assume that $\eta''_i$ are parallel to $\alpha''_1$ ($\omega''(\eta''_i)= \lambda\omega''(\alpha''_1)$, with $0<\lambda <1$). By construction, the surface $(X',\omega')$ admits
a decomposition into cylinders in the horizontal direction. Note that $X'$ must have three horizontal cylinders, otherwise there would be
horizontal saddle connection joining $P_1$ to $Q_2$, which is excluded since $\alpha''_1$ does not contain $\eta''_2$ (see Figure~\ref{fig:twinsSC:AC}).
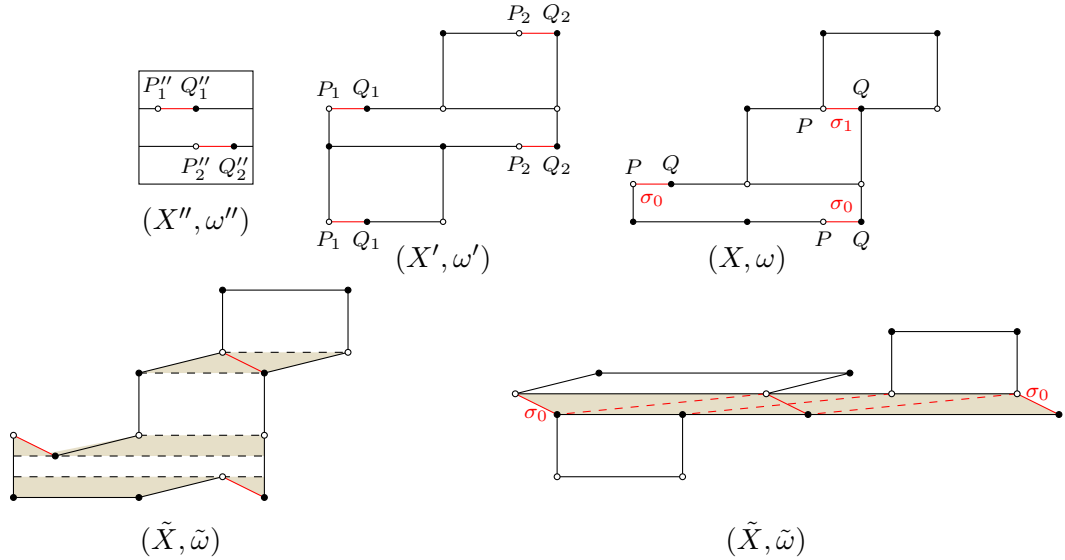
\begin{figure}[htb]
\subfloat{
\centering
\begin{tikzpicture}[scale=0.5]
\draw (-11,-1) -- +(3,0) -- +(3,3) -- +(0,3) -- cycle;
\draw (-11,1) -- +(0.5,0) (-9.5,1) -- +(1.5,0); \draw (-11,0) -- +(1.5,0) (-8.5,0)-- +(0.5,0);
\draw[red] (-10.5,1) -- (-9.5,1) (-9.5,0) -- (-8.5,0);

\draw (-5,-2) -- (-3,-2) -- (-3,0) -- (-1,0)  (0,0) -- (0,3) (-1,3) -- (-3,3) -- (-3,1) -- (-5,1) (-6,1) -- (-6,-2);
\draw (-6,0) -- (-3,0) (-3,1) -- (0,1);
\draw[red] (-6,1) -- (-5,1) (-6,-2) -- (-5,-2) (-1,3) -- (0,3) (-1,0) -- (0,0);

\draw (2,-1) -- (2,-2) -- (7,-2) (8,-2) -- (8,1) -- (10,1) -- (10,3) -- (7,3) -- (7,1) -- (5,1) -- (5,-1) -- (3,-1) (5,-1) -- (8,-1);
\draw[red] (2,-1) -- (3,-1) (7,-2) -- (8,-2) (7,1) -- (8,1);

\foreach \x in {(-10.5,1), (-9.5,0), (-6,1), (-3,1), (0,1), (-6,-2), (-3,-2), (-1,0),(-1,3), (2,-1), (5,-1), (7,-2), (7,1), (8,-1), (10,1)} \filldraw[fill=white] \x circle (2pt);

\foreach \x in {(-9.5,1), (-8.5,0), (-6,0), (-3,0), (0,0), (-3,3), (0,3),(-5,-2), (-5,1), (2,-2), (3,-1), (5,-2), (5,1), (7,3), (8,-2), (8,1), (10,3)} \filldraw[fill=black] \x circle (2pt);

\draw (-10.5,1) node[above] {$\scriptstyle P''_1$} (-9.5,1) node[above] {$\scriptstyle Q''_1$} (-9.5,0) node[below] {$\scriptstyle P''_2$} (-8.5,0) node[below] {$\scriptstyle Q''_2$};

\draw (-6,1) node[above] {$\scriptstyle P_1$} (-5,1) node[above] {$\scriptstyle Q_1$} (-6,-2) node[below] {$\scriptstyle P_1$} (-5,-2) node[below] {$\scriptstyle Q_1$} (-1,0) node[below] {$\scriptstyle P_2$} (-1,3) node[above] {$\scriptstyle P_2$} (0,0) node[below] {$\scriptstyle Q_2$}  (0,3) node[above] {$\scriptstyle Q_2$};

\draw (2,-1) node[above] {$\scriptstyle P$} (3,-1) node[above] {$\scriptstyle Q$} (7,-2) node[below] {$\scriptstyle P$} (7,1) node[below left] {$\scriptstyle P$} (8,-2) node[below] {$\scriptstyle Q$} (8,1) node[above] {$\scriptstyle Q$};

\draw[red] (2.5,-1) node[below] {$\scriptstyle \s_0$} (7.5,-2) node[above] {$\scriptstyle \s_0$} (7.5,1) node[below] {$\scriptstyle \s_1$};
\draw (-9.5,-2) node {$(X'',\omega'')$} (-3,-3) node {$(X',\omega')$} (5,-3) node {$(X,\omega)$};
\end{tikzpicture}
}
\qquad
\subfloat{
\centering
\begin{tikzpicture}[scale=0.55]
\fill[yellow!70!blue!30] (-9,0.5) -- (-6,0.5) -- (-4,1) -- (-7,1) -- cycle;
\fill[yellow!70!blue!30] (-12,-1) -- (-12,-1.5) -- (-11,-1.5) -- cycle;
\fill[yellow!70!blue!30] (-11.5,-1.5) -- (-6,-1.5) -- (-6,-1) -- (-9,-1) -- cycle;
\fill[yellow!70!blue!30] (-12,-2) -- (-12,-2.5) -- (-9,-2.5) -- (-7,-2) -- cycle;
\fill[yellow!70!blue!30] (-7,-2) -- (-6,-2.5) -- (-6,-2) -- cycle;
\fill[yellow!70!blue!30] (0,0) -- (1,-0.5) -- (13,-0.5) -- (12,0) -- cycle;

\draw (-12,-1) -- (-12,-2.5) -- (-9,-2.5) -- (-7,-2) (-6,-2.5) -- (-6,0.5) -- (-4,1) -- (-4,2.5) -- (-7,2.5) -- (-7,1) -- (-9,0.5) -- (-9,-1) -- (-11,-1.5);

\foreach \x in {(-12,-1), (-7,-2), (-7,1)} \draw[red] \x -- +(1,-0.5);
\foreach \x in {(-9,-1), (-9,0.5), (-7,1)} \draw[dashed] \x -- +(3,0);
\draw[dashed] (-12,-1.5) -- (-6,-1.5) (-12,-2) -- (-6,-2);

\foreach \x in {(-12,-1), (-9,-1), (-7,-2), (-7,1), (-6,-1), (-4,1)} \filldraw[fill=white] \x circle (2pt);
\foreach \x in {(-12,-2.5), (-11,-1.5), (-9,-2.5), (-9,0.5), (-7,2.5), (-6,-2.5), (-6,0.5), (-4,2.5)} \filldraw[fill=black] \x circle (2pt);

\draw (1,-0.5) -- (1,-2) -- (4,-2) -- (4,-0.5) -- (7,-0.5) -- (13,-0.5) (12,0) -- (12,1.5) -- (9,1.5) -- (9,0) -- (6,0) -- (8,0.5) -- (2,0.5) -- (0,0);
\draw (0,0) -- (6,0) (9,0) -- (12,0) (1,-0.5) -- (4,-0.5);
\foreach \x in {(0,0), (6,0), (12,0)} \draw[red] \x -- +(1,-0.5);
\foreach \x in {(6,0), (9,0), (12,0)} \draw[dashed, red] \x -- +(-5,-0.5);
\foreach \x in {(0,0), (1,-2), (4,-2), (6,0), (9,0), (12,0)} \filldraw[fill=white] \x circle (2pt);
\foreach \x in {(1,-0.5), (2,0.5), (4,-0.5), (7,-0.5), (8,0.5), (9,1.5), (12,1.5), (13,-0.5)} \filldraw[fill=black] \x circle (2pt);

\draw[red] (0.5,-0.5) node {$\scriptstyle \s_0$} (12.5,0) node {$\scriptstyle \s_0$};

\draw (6,-3.5) node {$(\tilde{X},\tilde{\omega})$} (-8,-3.5) node {$(\tilde{X},\tilde{\omega})$};
\end{tikzpicture}
}
%\end{minipage}

\caption{$\s_0$ and $\s_1$ are invariant by $\tau$, $X'$ admits two involutions: the hyperelliptic one $\iota$ which fixes each of the cylinders, and the involution $\tau'$ induced by $\tau$ which exchanges the pair simple cylinders and fixes the larger one. Observe that $\tau'$ exchanges $\eta_1=\overline{P_1Q_1}$ and $\eta_2=\overline{P_2Q_2}$. }
\label{fig:twinsSC:AC}
\end{figure}

We can reconstruct $(X,\omega)$ from $(X',\omega')$: one sees that $(X,\omega)$ also admits a decomposition into three horizontal cylinders
(see Figure~\ref{fig:twinsSC:AC}). Consider the surface $(\tilde{X},\tilde{\omega})=(X,\omega)+(0,-\epsilon)$, with $\epsilon >0$ small as shown in Figure~\ref{fig:twinsSC:AC}.
We see that $(\tilde{X},\tilde{\omega})$ admits a decomposition into four horizontal cylinders, three of which are simple. It is easy to check
that there exists a triple of twin saddle connections $\gamma_0,\gamma_1,\gamma_2$ in the largest horizontal cylinder of
$\tilde{X}$ (which is preserved by the Prym involution) which satisfy $\tau(\gamma_0)=-\gamma_0$, $\tau(\gamma_1)=-\gamma_2$,
and $\gamma_1\cup\gamma_2$ is a separating  curve. This proves the lemma.
\end{proof}

\begin{Lemma}
\label{lm:twinsSC:CA:112}
Let $(X,\omega)\in \Prym(1,1,2)$ be a Prym eigenform having a double twin $\s_1$ of $\s_0$.  Then
one can find in the connected component of $(X,\omega)$ a surface having two pairs of homologous saddle connections $(\s'_1,\s''_1)$  and $(\s'_2,\s''_2)$ that are exchanged by the Prym involution.
\end{Lemma}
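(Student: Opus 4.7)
The plan is to follow, step by step, the template of the proof of Lemma~\ref{lm:twinsSC:CA:22}, adapted to the new combinatorics of the stratum $\H(1,1,2)$. Set $c=\s_0*\tau(\s_0)*(-\s_1)$. By Lemma~\ref{lm:cut} the class $[c]\in H_1(X,\Z)^-$ is non-trivial and $\omega(c)=0$; moreover the proof of that lemma shows that cutting $X$ along $c$ and closing the two boundary components by identifying the sub-arc corresponding to $\s_0\cup\tau(\s_0)$ with the one corresponding to $\s_1$ produces a connected translation surface $(X',\omega')\in \H(1,1)$. On $X'$ one obtains two disjoint marked geodesic segments $\eta_1,\eta_2$, each of holonomy $\omega(\s_0\cup\tau(\s_0))=\omega(\s_1)$, joining a zero of $\omega'$ to a regular point and exchanged by the involution $\tau'$ of $X'$ induced by the Prym involution $\tau$.

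Next, I would descend one step further, to a flat torus. Let $\iota$ denote the hyperelliptic involution of $X'$. Since $\iota$ has six fixed points whereas $\tau'$ has strictly fewer (only finitely many of the fixed points of $\tau$ survive the identification, together with at most one new fixed point coming from the glued midpoints), one has $\iota\neq\tau'$, so that $\iota\circ\tau'$ is a non-trivial holomorphic involution of $X'$ with $(\iota\circ\tau')^*\omega'=\omega'$. The quotient $X''=X'/\langle \iota\circ\tau'\rangle$ is then an elliptic curve, $\omega'$ descends to a holomorphic $1$-form $\omega''$ with $\omega'=\pi^*\omega''$, and the segments $\eta_1,\eta_2$ descend to parallel segments $\eta''_1,\eta''_2\subset X''$ of the same length. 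As in the preceding lemma, the marked elliptic curve $(X'',\omega'',\eta''_1,\eta''_2)$ is determined by the absolute periods of $(X,\omega)$ alone, hence is constant as $(X,\omega)$ is moved in its leaf of the kernel foliation.

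Using the invariance above, I would then travel in the kernel foliation leaf so as to place the segments $\eta''_1,\eta''_2$ along a pair of parallel simple closed geodesics of $X''$ that avoid the other marked points, and rescale by $\GL^+(2,\R)$ to make these geodesics horizontal. This produces a horizontal cylinder decomposition of $X''$, which lifts through $\pi$ to a cylinder decomposition of $X'$ and, by reversing the cut-and-reglue construction, to a horizontal cylinder decomposition of $X$. A small vertical kernel foliation perturbation $(X,\omega)+(0,-\eps)$, in direct analogy with the last step of the proof of Lemma~\ref{lm:twinsSC:CA:22}, will split the relevant slits and expose, inside the largest horizontal cylinder (the one fixed by $\tau$), a pair of homologous saddle connections $\{\s'_1,\s''_1\}$ joining $R_1$ to $Q$; applying $\tau$ produces the mirror pair $\{\s'_2,\s''_2\}=\tau(\{\s'_1,\s''_1\})$ joining $R_2$ to $Q$, which gives exactly the configuration of Case A.

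The main obstacle will be the bookkeeping after the cutting-regluing construction of Lemma~\ref{lm:cut}: one must identify precisely which points of $X'$ come from $R_1,R_2,Q$ and the midpoint of $\s_1$, check that $\tau'$ has strictly fewer than six fixed points so that $\iota\circ\tau'$ is a genuine new involution, and verify that after reconstructing $X$ from the cylinder decomposition of $X'$ the emerging twin saddle connections join $R_1$ and $R_2$ (rather than $Q$ and $Q$, which is excluded because $\omega(\s_0)\neq 0$) to $Q$. Once these topological details are confirmed, the rest of the argument is a direct transcription of the geometric perturbation already carried out for the $\Prym(2,2)^{\rm odd}$ case.
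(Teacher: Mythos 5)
Your global strategy --- cut along $c=\s_0*\tau(\s_0)*(-\s_1)$ via Lemma~\ref{lm:cut}, pass to a surface $(X',\omega')\in\H(1,1)$ carrying two slits exchanged by the induced involution $\tau'$, use a torus quotient and the kernel foliation to find a periodic direction adapted to the slits, and reconstruct --- is the same as the paper's. But the combinatorial description of $(X',\omega')$ on which you build everything else is wrong, and this is precisely the ``bookkeeping'' you defer. In the $\H(1,1,2)$ case the slits $\eta_i$ do \emph{not} join a zero of $\omega'$ to a regular point (that is the $(2,2)^{\rm odd}$ situation of Lemma~\ref{lm:twinsSC:CA:22}). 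Here each endpoint of $\eta_i$ is a copy of $R_1$ or $R_2$ and is a \emph{regular} point of $\omega'$ (the $4\pi$ cone angle at $R_i$ is split by $\s_0$ and $\s_1$ into two sectors of angle $2\pi$, one per boundary component); the zero of $\omega'$ sits at the \emph{midpoint} of each slit, where the copy of $Q$ on the side $\eta''_i$ (angle $3\pi$) is glued to the copy of the midpoint of $\s_1$ on the side $\eta'_i$ (angle $\pi$), giving $4\pi$. Consequently the branch points of your cover $\pi:X'\to X''$ are the two slit midpoints themselves, so the instruction ``place $\eta''_1,\eta''_2$ along parallel closed geodesics avoiding the other marked points'' is not the right positioning condition, and the final step --- a $(0,-\eps)$ perturbation exposing homologous saddle connections from $R_i$ to $Q$ --- is asserted purely by analogy with the $(2,2)^{\rm odd}$ picture and is not verified for this slit geometry. (Your parenthetical that a $Q$-to-$Q$ configuration is ``excluded because $\omega(\s_0)\neq 0$'' is a non sequitur: the issue is which points of $X$ the boundary identifications send the cylinder-boundary segments to, not the vanishing of a period.)

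For comparison, the paper's proof avoids both of your intermediate devices: it does not form $X''=X'/\langle\iota\circ\tau'\rangle$, and it does not end with a perturbation. It records the asymmetric structure of the slit (side $\eta'_i$ from $\s_1$, side $\eta''_i$ from $\s_0\cup\tau(\s_0)$ with the zero at the midpoint), observes that $\tau'$ is not the hyperelliptic involution (it has two fixed points, the surviving fixed points of $\tau$ off $c$), hence $X'$ is a double cover of a torus and admits a three-cylinder periodic direction; it then uses $\GL^+(2,\R)$ and the kernel foliation to make the slits horizontal with the $\eta'_i$ sides lying in the boundary of the largest cylinder, and reads off the two pairs $(\s'_i,\s''_i)$ directly from the reconstruction. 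Your outline is salvageable along these lines, but as written the argument after the cut rests on the wrong local model and therefore has a genuine gap.
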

\begin{proof}
Let $(X',\omega') \in \H(1,1)$ be the surface obtained by the ``cutting-gluing'' construction along $c=\s_0*\tau(\s_0)*(-\s_1)$ (see Lemma~\ref{lm:cut}). Note that we have on $X'$ two marked and disjoint geodesic segments $\eta_1$ and $\eta_2$ (corresponding to $c$) such that the midpoint of $\eta_i$ is a zero of $\omega'$, and $\omega'(\eta_i)=\omega(\s_1)=2\omega(\s_0)$. We will consider $\eta_i$ as a slit with two sides $\eta'_i$ and $\eta_i''$, where $\eta'_i$ corresponds to $\s_1$, and $\eta''_i$ corresponds to the union $\s_0\cup\tau(\s_0)$. Remark that the Prym involution $\tau$ induces an involution $\tau'$ on $X'$ which is not the hyperelliptic involution. It follows that $X'$ is a double cover of a torus.

\begin{figure}[htb]
\centering
\begin{tikzpicture}[scale=0.6]
\draw  (-2.5,2) --  (-6,2) -- (-6,-1.5) -- (-3.5,-1.5) -- (-3.5,0) (-4.5,0) -- (-1,0) -- (-1,3.5) -- (-3.5,3.5) -- (-3.5,2);

\draw[dashed] (-6,0) -- (-4.5,0) (-2.5,2) -- (-1,2);

\draw (-4.5,2) .. controls (-4,1.7) and (-3,1.7) .. (-2.5,2);
\draw (-4.5,0) .. controls (-4,0.3) and (-3,0.3) .. (-2.5,0);

\foreach \x in {(-4.5,2), (-4.5,0), (-4.5,-1.5)} \filldraw[fill=white] \x circle (2pt);
\foreach \x in {(-2.5,3.5), (-2.5,2), (-2.5,0)} \filldraw[fill=black] \x circle (2pt);

\foreach \x in {(-6,2), (-6,0), (-6,-1.5), (-3.5,3.5), (-3.5,2), (-3.5,0), (-3.5,-1.5), (-1,3.5), (-1,2), (-1,0)} {\filldraw[fill=white] \x circle (3pt); \draw \x +(45:3pt) -- +(225:3pt) +(135:3pt) -- +(315:3pt);}

\draw (-3,1.5) node {\tiny $\eta'_1$} (-4,0.5) node {\tiny $\eta'_2$} (-5.25,2.3) node {\tiny $\s'_1$} (-1.75, 2.3) node {\tiny $\s''_2$} (-5.25,0.3) node {\tiny $\s''_1$} (-1.75, 0.3) node {\tiny $\s'_2$};

\draw (1,0) -- (1,-1.5) -- (3.5,-1.5) -- (3.5,0) -- (7,0) -- (7,2) -- (8,2) -- (8,3.5) -- (5.5,3.5) -- (5.5,2) -- (2,2) -- (2,0) -- cycle;
\draw (2,0) -- (3.5,0)  (5.5,2) -- (7,2) ;

\foreach \x in {(1,0), (1,-1.5), (3.5,2), (3.5,0), (3.5,-1.5)} \filldraw[fill=white] \x circle (2pt);
\foreach \x in {(5.5,3.5), (5.5,2), (5.5,0), (8,3.5), (8,2)} \filldraw[fill=black] \x circle (2pt);
\foreach \x in {(2,2), (2,0), (2,-1.5), (7,3.5), (7,2), (7,0)} {\filldraw[fill=white] \x circle (3pt); \draw \x +(45:3pt) -- +(225:3pt) +(135:3pt) -- +(315:3pt);}

\draw (1.5,0.3) node {\tiny $\s_0$} (7.5,3.8) node {\tiny $\tau(\s_0)$} (4.5,2.2) node {\tiny $\s_1$} (4.5,-0.2) node {\tiny $\s_1$} (2.75, 2.3) node {\tiny $\s'_1$} (2.75, 0.3) node {\tiny $\s''_1$} (6.25,2.3) node {\tiny $\s''_2$} (6.25,0.3) node {\tiny $\s'_2$};

\draw (-3.5,-2) node {\tiny $(X',\omega')$} (4.5,-2) node {\tiny $(X,\omega)$};
\end{tikzpicture}

\caption{$\Prym(1,1,2)$ case C: $\s_0$ has a double-twin.}
\label{fig:2xtwinSC:112}
\end{figure}
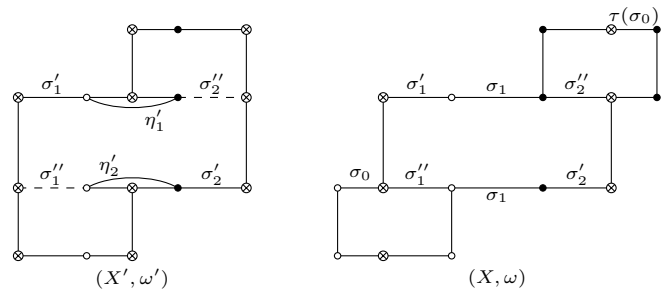
By construction, as $(X,\omega)$ moves in its kernel foliation leaf $(X',\omega')$ is fixed, only the marked geodesic segments (slits) $\eta_i$ are changed. Using $\GL^+(2,\R)$, we can assume that $X'$ is horizontally periodic, and it has  three horizontal cylinders (one may use the fact that $X'$ is a double cover of a torus to show that a periodic direction with three cylinder does exist).  We can arrange so that the slits are also horizontal, and $\eta'_i$ are contained in the boundary of the largest cylinder (see Figure~\ref{fig:2xtwinSC:112}). Reconstruct $(X,\omega)$ from $(X',\omega')$, we see that $(X,\omega)$ has two pairs of homologous saddle connections $(\s'_1,\s''_1)$ and $(\s'_2,\s''_2)$, such that $\tau(\s'_1)=-\s'_2, \tau(\s''_1)=\s''_2$,  and $\s'_1*\s_1*\s''_2$ is homologous to the core curve of the largest horizontal cylinder in $X$. The lemma is then proved.
\end{proof}

%*************************************************************************************
%*************************************************************************************
%*************************************************************************************
\section{Proof of the main result}
\label{sec:proof}

Let us now give the proof of our main theorem.%, the results needed for the proof will be proved subsequently.
\begin{proof}[Proof of Theorem~\ref{thm:main}]
First of all $D \in \N, \; D \equiv 0,1,4 \mod 8$, the loci $\PrD(\kappa)$ are non empty: this is Corollary~\ref{cor:non:empty}.\medskip

We now consider the cases $D=0,1,4 \mod 8, D\geq 9, \text{ and } D \not\in\{9,16\}$.
By Theorem~\ref{thm:no:admisSC} and Corollary~\ref{cor:twin}, any component of $\Omega E_D(\kappa)$ contains a surface
with an admissible saddle connection that collapse to a point in $\Omega E_D(4)$. Recall that
$\Omega E_D(4)$ is a finite collection of Teichm\"uller discs. By Proposition~\ref{prop:neighbor}
for any connected component $\mathcal{C}$ of $\Omega E_D(4)$, there exists at most one component of
$\Omega E_D(\kappa)$ adjacent to $\mathcal{C}$ {\em i.e.} its closure contains $\mathcal{C}$.
Therefore, the number of connected components of $\PrD(\kappa)$ is bounded from above by the number of
components of $\PrD(4)$. \medskip

In particular, when $D\equiv 0,4 \mod 8$, since $\PrD(4)$ is connected (see Theorem~\ref{thm:comp:Prym4}), so is $\PrD(\kappa)$.
For $D\equiv 1 \mod 8$, by Theorem~\ref{thm:comp:Prym4} we know that $\PrD(4)$ has two components, so $\PrD(\kappa)$ has at most two
components. On the other hand, Theorem~\ref{thm:Dodd:disconnect} tells us that $\PrD(\kappa)$ cannot be connected.
Thus we can conclude that $\PrD(\kappa)$ has exactly two connected components. \medskip

For $ D\equiv 5 \mod 8$, if $\PrD(\kappa)$ is non-empty then again Theorem~\ref{thm:no:admisSC}, Corollary~\ref{cor:twin} and
Proposition~\ref{prop:collapse} implies that  $\PrD(4)$ is also non-empty, which contradicts Theorem~\ref{thm:comp:Prym4}. \medskip

We now consider the cases $D\in\{9,16\}$. Since
$\PrD(4)= \varnothing$, by Proposition~\ref{prop:collapse}, there exists no admissible saddle connection on any surface in $\Omega E_9(\kappa)\sqcup \Omega E_{16}(\kappa)$. By Theorem~\ref{thm:no:admisSC}, we see that any surface in $\Omega E_9(\kappa)\sqcup\Omega E_{16}(\kappa)$ belongs to the
same component as one of the surfaces
$$
\Sur_\kappa(1,1,-1), \Sur_\kappa(1,1,1), \Sur_\kappa(1,2,0) \textrm{ or } \Sur_{(2,2)}(2,1,0)
$$
(see Lemma~\ref{lm:example:D} for the definition).

It follows immediately that $\Omega E_{16}(2,2)$ and $\Omega E_9(\kappa)$ has at most two components
and $\Omega E_{16}(1,1,2)$ is connected.
The fact that $\Omega E_9(\kappa)$ is not connected is proved in Theorem~\ref{thm:Dodd:disconnect}. Hence
$\Omega E_{9}(\kappa)$ has exactly two connected components. \medskip

It remains to prove that $\Omega E_{16}(2,2)$ is connected. It is sufficient to show that
$\Sur_{(2,2)}(1,2,0), \Sur_{(2,2)}(2,1,0)\in \Omega E_{16}(2,2)^{\rm odd}$ belong to the same component.
We consider $(X_\eps,\omega_\eps)=\Sur_{(2,2)}(2,1,0)+(0,\varepsilon)$, with $\varepsilon >0$ small enough (see Figure~\ref{fig:D16:connect}).

\begin{figure}[htb]
\centering
\begin{tikzpicture}[scale=0.6]
\draw (2,-0.5) -- (3,-1) -- (3,1) -- (2,1.5) -- (2,2.5) -- (3,2) -- (3,3) -- (2,3.5) (1,3) -- (1,2) -- (0,2.5) -- (0,1.5) -- (1,1) -- (1,-1);
\foreach \x in {(1,3), (1,2), (1,1), (1,-1)} \draw[red] \x -- +(1,0.5);
\foreach \x in { (1,3), (3,3), (1,2), (3,2), (1,1), (3,1), (1,-1), (3,-1)} \filldraw[fill=white] \x circle (2pt);
\foreach \x in {(2,3.5), (0,2.5), (2,2.5), (0,1.5), (2,1.5), (2,-0.5)} \filldraw[fill=black] \x circle (2pt);
\draw (1.5,-1.5) node[below] {$\scriptstyle (X_\eps,\omega_\eps)=\Sur_{(2,2)}(2,1,0)+(0,\epsilon)$};

\draw (5,1) -- (6,-0.5) -- (8,-0.5) -- (7,1) -- (9,1) -- (10,0.5) -- (12,0.5) -- (11,1) -- (12,1.5) -- (10,1.5) -- (9,2) -- (7,2) -- (8,1.5) -- (6,1.5);
\foreach \x in {(5,1), (7,1), (9,1), (11,1)} \draw[red] \x -- +(1,0.5);
%\draw[dashed]  (6,1.5) -- (7,1) (8,1.5)  -- (9,1) (10,1.5) -- (11,1);
\foreach \x in {(5,1), (8,1.5), (9,1)} \draw \x -- +(2,0);
\foreach \x in {(7,2), (9,2), (5,1), (7,1), (9,1), (11,1)} \filldraw[fill=white] \x circle (2pt);
\foreach \x in {(6,1.5), (8,1.5), (10,1.5), (12,1.5), (6,-0.5), (8,-0.5), (10,0.5), (12,0.5)} \filldraw[fill=black] \x circle (2pt);
\draw (8.5,-1.5) node[below] {$\scriptstyle (X_\eps,\omega_\eps)$};

\draw (14,1.5) -- (14,-0.5) -- (16,-0.5) -- (16,1) -- (18,1) -- (18,0.5) -- (20,0.5) -- (20,1.5) -- (18,1.5) -- (18,2) -- (16,2) -- (16,1.5) -- cycle;
\foreach \x in {(14,1), (16,1.5), (18,1)} \draw \x -- +(2,0);
\foreach \x in {(14,1), (16,1), (18,1), (20,1)} \draw[red] \x -- +(0,0.5);
%\draw[dashed] (16,1) -- (16,1.5) (18,1) -- (18,1.5);
\foreach \x in {(16,2), (18,2), (14,1), (16,1), (18,1), (20,1)} \filldraw[fill=white] \x circle (2pt);
\foreach \x in {(14,1.5), (16,1.5), (18,1.5), (20,1.5), (14,-0.5), (16,-0.5), (18,0.5), (20,0.5)} \filldraw[fill=black] \x circle (2pt);
\draw (17,-1.5) node[below] {$\scriptstyle (Y,\eta)$};

\end{tikzpicture}
\caption{Connecting $\Sur_{(2,2)}(2,1,0)$ to $\Sur_{(2,2)}(1,2,0)$.}
\label{fig:D16:connect}
\end{figure}
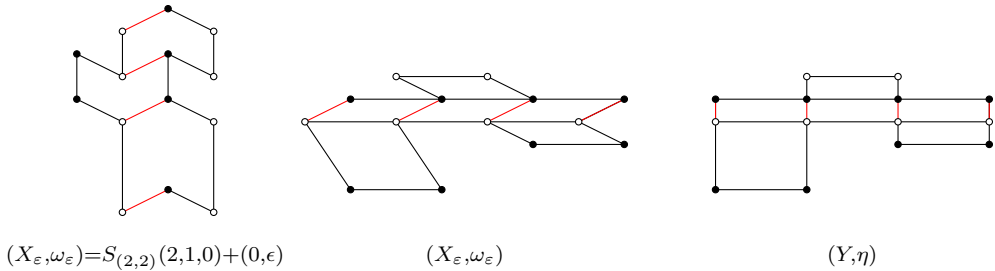
Observe that $(X_\eps,\omega_\eps)$ admits a decomposition into four horizontal cylinders. Moving horizontally in the kernel foliation leaf of
$(X_\eps,\omega_\eps)$ we get a surface $(Y,\eta)=(X_\eps,\omega_\eps)+v$, with $v\in \R\times\{0\}$, which admits a decomposition into three vertical cylinders. It is not difficult to see that $(Y,\eta)$ can be connected to $\Sur_{(2,2)}(1,2,0)$ by using the action of $\GL^+(2,\R)$ and moving in the kernel foliation leaves.
The proof of Theorem \ref{thm:main} is now complete.
\end{proof}

As a direct corollary we prove Theorem~\ref{thm:exist:3tori:dec} {\em i.e.} the existence in any component of
$\Omega E_D(\kappa)$ of surfaces which admit three-tori decompositions.

\begin{proof}[Proof of Theorem~\ref{thm:exist:3tori:dec}]
Let $(w,h,e)\in \Z^3$ be as in Lemma~\ref{lm:example:D} where $D=e^2+8wh$. We consider
the corresponding surfaces $(X_{\pm},\omega_\pm):=\Sur_\kappa(w,h,\pm e)$.
By Lemma~\ref{lm:example:D} $(X_{\pm},\omega_\pm)\in\Omega E_D(\kappa)$. \medskip

\noindent If $D \not \equiv 1 \mod 8$ then by Theorem~\ref{thm:main}, $\Omega E_D(\kappa)$ is connected
and $(X_\pm,\omega_\pm)$ admits a three-tori decomposition. \medskip

\noindent If $D \equiv 1 \mod 8$ then by Theorem~\ref{thm:main}, $\Omega E_D(\kappa)$ has
two connected components and from the proof of Theorem~\ref{thm:Dodd:disconnect},
$(X_{+},\omega_+)$ and $(X_{-},\omega_-)$ do not belong to the same connect component.
This ends the proof of Theorem~\ref{thm:exist:3tori:dec}.
\end{proof}

%************************************************************************************
%************************************************************************************
%************************************************************************************
%************************************************************************************

\end{document}